\documentclass[a4paper,10pt]{article}
\usepackage{float}
 \usepackage{latexstyle}
 \usepackage{bbm}
 \usepackage[section]{placeins}
\usepackage{mathtools}
 
\usepackage{amsmath}
\usepackage{caption}

\makeatletter
\renewcommand*\env@matrix[1][*\c@MaxMatrixCols c]{%
  \hskip -\arraycolsep
  \let\@ifnextchar\new@ifnextchar
  \array{#1}}
\makeatother

\makeatletter
\AtBeginDocument{%
  \expandafter\renewcommand\expandafter\subsection\expandafter{%
    \expandafter\@fb@secFB\subsection
  }%
}
\makeatother

\title{On the role of total variation in compressed sensing}

\author{Clarice Poon\thanks{cmhsp2@cam.ac.uk}\\ Department of Applied Mathematics and Theoretical Physics,\\ University of Cambridge}

\date{July, 2014; Revised April, 2015}

\begin{document}

\maketitle

\begin{abstract}
This paper considers the problem of recovering a one or two dimensional discrete signal which is approximately sparse in its  gradient from an incomplete subset of its  Fourier coefficients which have been corrupted with noise. We prove that in order to obtain a reconstruction which is robust to noise and stable to inexact gradient sparsity of order $s$ with high probability, it suffices to draw $\ord{s \log N}$ of the available Fourier coefficients uniformly at random. However, we also show that if one draws $\ord{s \log N}$ samples in accordance with a particular distribution which concentrates on the low Fourier frequencies, then the stability bounds which can be guaranteed are optimal up to $\log$ factors. Finally, we prove that in the one dimensional case where the underlying signal is gradient sparse and its sparsity pattern satisfies a minimum separation condition, to guarantee exact recovery with high probability, for some $M<N$, it suffices to draw $\ord{s\log M\log s}$ samples uniformly at random from the Fourier coefficients whose frequencies are no greater than $M$.
\end{abstract}
\section{Introduction}
This paper revisits the theory behind one of the first instances of compressed sensing: the recovery of a gradient sparse signal from  a small subset of its discrete Fourier data. This problem was first studied in \cite{candes2006robust}, where it was shown that one can achieve exact recovery with highly incomplete measurements by drawing the measurements uniformly at random. We recall the main result of \cite{candes2006robust} in Theorem \ref{thm:original}.  In the following result and throughout this paper, for $N\in\bbN$ and for each $p\geq 1$, $\nm{\cdot}_p$ denotes the norm over the complex vector space of $\bbC^N$ defined by $\nm{z}_p^p := \sum_{j=1}^N \abs{z_j}^p$ for each $z\in\bbC^N$.  Also, given $\Delta \subset \bbZ$, let $\rP_\Delta$ denote the projection matrix, which restricts a vector to its entries indexed by $\Delta$.
\begin{theorem}[\cite{candes2006robust}]\label{thm:original}
Let $N\in\bbN$ and let $\rD:\bbC^N\to \bbC^N$ be such that for any $z\in\bbC^N$,  $\rD z = (z_j - z_{j+1})_{j=1}^N$ where $z_{N+1} := z_1$.   Define $\nmu{z}_{TV} := \nmu{\rD z}_1$. 
Let $\rA\in\bbC^{N\times N}$ be the discrete Fourier transform on $\bbC^N$, such that given $z\in\bbC^N$,
\be{\label{eq:DFT}
\rA z = \left(\sum_{j=1}^N z_j e^{2\pi i k j/N}\right)_{k=-\lfloor  N/2 \rfloor +1}^{\lceil  N/2 \rceil}.
}
Let $x\in\bbC^N$ be gradient $s$-sparse, i.e. $\abs{\br{ j: \abs{(\rD z)_j}\neq 0}} = s$ and let $\epsilon\in (0,1)$.
Suppose that $\Omega = \Omega'\cup \br{0}$ where $\Omega'\subset \br{-\lfloor N/2 \rfloor +1,\ldots, \lceil  N/2 \rceil}$  consists of $m$ indices chosen uniformly at random with
$$
m\geq C \cdot s\cdot  \left(\log(N)+ \log(\epsilon^{-1})\right)
$$
for some numerical constant $C$. Then, with probability exceeding $1-\epsilon$, $x$ is the unique solution to
\be{\label{eq:orig}
\min_{z\in\bbC^N} \nmu{z}_{TV} \text{ subject to } \rP_\Omega \rA z = \rP_\Omega \rA x.
}
\end{theorem}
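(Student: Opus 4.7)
The strategy is to reduce TV minimization to a standard $\ell_1$ recovery problem for the sparse gradient and then invoke the dual-certificate argument for uniformly random Fourier measurements. The Fourier transform diagonalises $\rD$: since $(\rA \rD z)_k = \mu_k (\rA z)_k$ with $\mu_k := 1 - e^{-2\pi i k/N}$, and $\mu_k = 0$ only at $k = 0$, the operator $\rD$ has a one-dimensional kernel (the constants) and restricts to an isomorphism of the mean-zero hyperplane $\{z : (\rA z)_0 = 0\}$. Given $z$ feasible for \eqref{eq:orig}, set $y := \rD z$ and $u := \rD x$; then $(\rA y)_k = \mu_k (\rA z)_k = \mu_k (\rA x)_k = (\rA u)_k$ for every $k \in \Omega' := \Omega \setminus \{0\}$, and $(\rA y)_0 = (\rA u)_0 = 0$ automatically. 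The deterministic $k = 0$ sample pins down $(\rA z)_0 = (\rA x)_0$, so $z \mapsto y$ is a bijection between feasible TV solutions and vectors $y$ with $(\rA y)_0 = 0$ and $\rP_{\Omega'} \rA y = \rP_{\Omega'} \rA u$, and clearly $\|z\|_{TV} = \|y\|_1$. Theorem \ref{thm:original} therefore reduces to showing that the $s$-sparse vector $u$ is the unique minimizer of $\min_y \|y\|_1$ subject to $\rP_{\Omega'} \rA y = \rP_{\Omega'} \rA u$, the textbook problem of sparse recovery from uniformly random Fourier samples.

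\textbf{Dual certificate.} With $A := \rP_{\Omega'} \rA$ and $T := \mathrm{supp}(u)$, $\ell_1$ duality reduces matters to (i) injectivity of $A\rP_T^*$, and (ii) existence of $\eta \in \mathrm{Range}(A^*)$ with $\rP_T \eta = \mathrm{sgn}(u_T)$ and $|\eta_j| < 1$ for every $j \notin T$. I would take the minimum-$\ell_2$ interpolant
\[
\eta := A^* A \rP_T^* (\rP_T A^* A \rP_T^*)^{-1} \mathrm{sgn}(u_T),
\]
and reduce its analysis to concentration of $A^*A = \sum_{k \in \Omega'} a_k a_k^*$, where $a_k := \rA^* e_k$ satisfies $\|a_k\|_\infty = 1$. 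The conditioning bound $\|(N/m)\rP_T A^*A \rP_T^* - I_T\|_{\mathrm{op}} \leq 1/2$ follows from a matrix Bernstein inequality applied to the i.i.d.\ rank-one summands $\rP_T a_k a_k^* \rP_T^*$ of operator norm $\leq s/N$, which takes care of (i) and of the invertibility in the formula above at sample cost $m \gtrsim s \log(s/\epsilon)$.

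\textbf{Main obstacle.} The delicate step is controlling $\max_{j \notin T}|\eta_j|$. Expanding the inverse as a Neumann series around $I_T$ writes each $\eta_j$ as a convergent random series whose individual terms are amenable to scalar Bernstein bounds, and a union bound over the $N - s$ off-support indices produces the $\log N$ contribution to the sample count. The chief obstacle is keeping this estimate sharp: a crude truncation of the Neumann series loses extra $\log$ factors, and achieving the claimed rate $m \gtrsim s(\log N + \log\epsilon^{-1})$ requires either the moment method of Candès--Romberg--Tao, carefully tracking high moments of the truncated series, or Gross's golfing construction, in which $\eta$ is built iteratively from disjoint sample batches so that each stage contributes an independent average and the off-support residual decays geometrically. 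Either route is standard but forms the technical bulk of the proof; by contrast the reduction from TV to $\ell_1$ is essentially free.
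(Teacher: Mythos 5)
Your argument is correct and follows essentially the same route the paper itself takes: Theorem \ref{thm:original} is only quoted from \cite{candes2006robust}, but the paper's proof of its robust generalization (Theorem \ref{thm:unif_samp}) uses exactly your two ingredients --- the commutation identity $(\rA \rD z)_k = (1-e^{-2\pi i k/N})(\rA z)_k$ to pass from total variation to $\ell^1$ minimization of the gradient, and a dual certificate for uniformly random Fourier rows built by the golfing scheme (Proposition \ref{prop:dual_existence_inc}, imported from \cite{candes2011probabilistic}), which is what delivers $m\gtrsim s(\log N+\log\epsilon^{-1})$. Only trivial slips to tidy up: with $a_k=\rA^* e_k$ (entries of modulus $1$) the correct normalization is $\frac{1}{m}\rP_T A^*A\rP_T^*\approx \rI_T$ rather than $\frac{N}{m}$, the rank-one summands then have operator norm $s$ rather than $s/N$, and since $\Omega'$ is drawn uniformly \emph{without} replacement the summands are not literally i.i.d.\ --- a standard transference between the uniform, Bernoulli and with-replacement models (which the paper also invokes in the remark after Proposition \ref{prop:dual_existence_inc}) closes that gap.
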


This result can be easily extended to two dimensions and was significant because of its close links to practical applications -- the total variation norm is widely used in imaging applications \cite{chambolle2010introduction, rudin1992nonlinear} since
natural images are generally assumed to be compressible in their gradient, and
furthermore, many imaging devices can be modelled as sampling the Fourier transform of an unknown
object of interest. The original motivation behind Theorem \ref{thm:original} was a reconstruction problem in parallel-beam tomography from \cite{delaney1996fast}, however, this result has since generated much interest for other applications where one directly samples the Fourier transform, such as magnetic resonance imaging \cite{lustig2007sparse} and also applications linked to the Radon transform, such as electron tomography \cite{Leary201370} and radio interferometry\cite{wiaux2009compressed}. As mentioned in the cited works, although the latter two applications are associated with the Radon transform instead of the Fourier transform, the Fourier slice theorem can be exploited to model the sampling process as samples of the Fourier transform.

Theorem \ref{thm:original} suggests that a significant saving in the data acquisition process can be achieved since for $s \ll N$, this sampling cardinality of $\ord{s \log N}$ is  significantly smaller than the number specified by its Nyquist rate. This result was rather spectacular because prior to this, the frequently discussed approach of filtered backprojection algorithms led only to reconstructions with a large number of artefacts. Note also that while solving (\ref{eq:orig}) perfectly recovers $x\in\bbC^N$, if $\nm{\cdot}_{TV}$ was replaced with $\nm{\rD \cdot }_2$ such that we simply solve a Tikhonov regularization problem, the solution necessarily belongs to an $m$ dimensional subspace \cite[Theorem 13.1]{mallat2008wavelet} if $m$ is the number of samples and one cannot expect an exact reconstruction.

However, in order to fully understand the role of total variation in compressed sensing for such practical applications, there are two immediate challenges.
\begin{enumerate}
\item For some unknown signal $x\in\bbC^N$, it is more realistic to assume that we do not observe
$\rP_\Omega \rA x$, but $y$ such that $\nmu{\rP_\Omega \rA x - y}_2\leq \delta \sqrt{m}$ with $m=\abs{\Omega}$ and for some noise level $\delta >0$. Furthermore, often  $x$ is compressible only in its gradient rather than sparse, i.e., $x$ can be approximated by $\rP_\Delta x$ for some $\Delta\subset \br{1,\ldots, N}$ with $\abs{\Delta}=s \ll N$.
So, in practice, the following minimization problem is solved in place of (\ref{eq:orig}).
\be{\label{eq:noise_tv}
\min_{z\in\bbC^N} \nmu{z}_{TV} \text{ subject to } \nmu{\rP_\Omega \rA z - y}_2 \leq \sqrt{m}\cdot \delta.
}
An immediate question is whether the uniform random choice of $\Omega$ from Theorem \ref{thm:original}  guarantees robust and stable recovery of $x$. (Robustness and stability refer to control over the reconstruction error by $\delta$ and $\nmu{\rP_\Delta^\perp \rD x}_1$ respectively.)
\item Since the publication of \cite{candes2006robust}, there has been much empirical work on how to choose $\Omega$ such that we minimize its cardinality, while retaining properties of stability and robustness. In practice, $\Omega$ is often chosen in accordance with some \textit{variable density distribution} \cite{ lustig2007sparse, lustig2008compressed, benning2014phase}, which concentrates more on low Fourier frequencies and less on high Fourier frequencies. Furthermore, in practice, $\Omega$ is not chosen in the uniform random manner as specified by Theorem \ref{thm:original}, because empirically, it has been observed to result in  inferior reconstructions when compared with variable density sampling schemes. Thus, it is of interest to derive theoretical statements to understand the reconstruction qualities of solutions to (\ref{eq:noise_tv}) with a non-uniform choice of $\Omega$.
\end{enumerate}

This paper will derive theoretical results for the first question and partially address the second question. 
Note that the finite differences operator considered in \cite{candes2006robust} was defined with periodic boundary conditions and this formulation will be assumed throughout this paper. However, the use of periodic boundary conditions is not essential in practice and is used here  only to simplify the analysis (we will exploit the fact that the periodic gradient operator satisfies a commutative property with the discrete Fourier transform) and because it allows for significant simplifications in the numerical implementation  of (\ref{eq:noise_tv}). See \cite{goldstein2009split} for algorithmic details.

\subsection*{Related results and overview}
 Prior work relating to the use of total variation in compressed sensing for the stable and robust recovery of signals in two or higher dimensions include \cite{needell2013stable, needell2013near}. Their work considered the recovery of gradient sparse signals by solving (\ref{eq:noise_tv}) where $\rA$ is a matrix which satisfies a restricted isometry property  (defined in Section \ref{sec:stab_rip})  when composed with the discrete Haar transform. Although this property is not satisfied by the discrete Fourier transform, \cite{ward2013stable} demonstrated that one can exploit these results to derive recovery results for the case of weighted Fourier samples. More recently, recovery guarantees for total variation minimization from random Gaussian samples in the one dimensional case have also been derived \cite{cai2013guarantees}.  
 
However, to date, there have been few works directly analysing the use of total variation when sampling the Fourier transform and the purpose of this paper is to extend the result of \cite{candes2006robust} to include the case of inexact gradient sparsity and noisy Fourier measurements. The main results are presented in Section \ref{sec:main_1}.
We prove that reconstructions obtained through uniform random sampling are robust to noise and stable to inexact sparsity, although the recovery estimates are not optimal. We also prove that if uniform random sampling is combined with one particular type of variable density sampling (which was introduced in \cite{ward2013stable}), the reconstructions are, up to $\log$ factors, guaranteed to be robust and optimally stable. This suggests that one of the benefits of variable density sampling is added stability and Section \ref{sec:num_stab} will present some numerical examples to support this claim. These two results are proved for the recovery of  one or two dimensional signals, although the techniques  are  applicable in higher dimensions also. 

In contrast to the results of \cite{needell2013stable, needell2013near,ward2013stable}, the results of this paper are not concerned with universal recovery where we guarantee the recovery of \emph{all} gradient $s$-sparse signals from one random sampling set $\Omega$. Instead, we derive  results for the recovery of one specific signal from  a random choice of $\Omega$. For this reason, the proofs in this paper do not rely completely on the restricted isometry property and we require only $\ord{s\log N}$ samples for recovery up to sparsity level $s$, as opposed to $\ord{s\log^5N\log^3s}$  samples as derived in \cite{ward2013stable}.
 
This paper will also consider the recovery of one dimensional signals  
whose gradient sparsity pattern satisfies some minimum separation condition from low frequency Fourier samples. This is presented in Theorem \ref{thm:min_sep_thm}. Due to the close relationship between the discrete Fourier transform and the discrete gradient operator, this result is closely related to the idea of super-resolution, which considers the recovery of a sum of diracs from its low frequency Fourier samples \cite{candes2013super,candes2014towards,tang2012compressive}. Even though super-resolution is studied in an infinite dimensional setting, the  proof of Theorem \ref{thm:min_sep_thm} will make use of finite dimensional versions of the results in \cite{candes2013super,candes2014towards,tang2012compressive}.

\section{Main results}

\subsection{Near-optimal sampling and error bounds}\label{sec:near_optimal}
This section presents results for the recovery of one dimensional and two dimensional vectors, showing how one can recover elements of $\bbC^N$ (or $\bbC^{N\times N}$) from $\ord{s\log N}$ samples, with accuracy up to the best gradient $s$-sparse approximation. Throughout, given $a,b\in\bbR$, let $a \lesssim b$ denote $a\leq C \cdot b$, for some numerical constant $C$ which is independent of all variables under consideration.
\begin{theorem}\label{thm:near_optimal}
For $N = 2^J$ with $J\in\bbN$, let $\rA$ be the discrete Fourier transform  and  let $\rD$ be the discrete gradient operator on $\bbC^N$ from Theorem \ref{thm:original}.
Let $\epsilon\in (0,1)$ and let $\Delta \subset \br{1,\ldots, N}$ with $\abs{\Delta}= s$. 
Let $x\in\bbC^N$ .
 Let  $\Omega_1,\Omega_2 \subset \br{-N/2+1,\ldots, N/2}$ be such that 
 $\abs{\Omega_1} = \abs{\Omega_2}=m$ with 
 $$
 m \gtrsim s \cdot \log(N)(1 + \log(\epsilon^{-1})).
 $$
Let  $\Omega_1$ be chosen uniformly at random, and let $\Omega_2 = \br{k_1,\ldots, k_{m}}$ consist of $m$ indices which are independent and identically distributed (i.i.d.)  such that  for each $j=1,\ldots, m$,
$$
\bbP(k_j = n) = p(n), \quad p(n) = C \left(\log(N)\max\br{1, \abs{n}}\right)^{-1}, \quad n=-N/2+1,\ldots, N/2,
$$
where $C$ is an appropriate constant such that $p$ is a probability measure.   Let $\Omega = \Omega_1\cup \Omega_2$ and suppose that $y\in\bbC^N$ is such that $\nm{y-\rP_\Omega \rA x}_2\leq  \sqrt{m} \cdot \delta$ for some $\delta\geq 0$.
Let $\hat x$ be a minimizer of 
$$
\min_{z\in\bbC^N} \nmu{z}_{TV} \text{ subject to } \nmu{\rP_\Omega \rA z - y}_2 \leq \sqrt{m}\cdot \delta.
$$
Then with probability exceeding $1-\epsilon$, 
$$
\nmu{\rD x  - \rD \hat x}_2 \lesssim \left(\delta \sqrt{s} + \cL_2 \cdot \frac{\nmu{\rP_\Delta^\perp \rD x}_1}{\sqrt{s}}\right),
\qquad
\frac{\nm{x-\hat x}_2}{\sqrt{N}} \lesssim   \cL_1\cdot  \left(\frac{\delta}{\sqrt{s}}  + 
 \cL_2 \cdot \frac{   \nmu{\rP_\Delta^\perp \rD x}_1}{s}\right),
$$
where 
$
\cL_1= \log^2(s) \log(N)\log(m)$ and $ \cL_2 = \log(s) \log^{1/2}(m).
$
\end{theorem}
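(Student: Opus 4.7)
The plan rests on the fact that the periodic gradient $\rD$ is diagonalised by the discrete Fourier transform $\rA$: since $(\rD z)_j = z_j - z_{j+1}$ with wrap-around, $\rA\rD = M\rA$ where $M$ is diagonal with entries $M_{kk} = 1 - e^{-2\pi i k/N}$. In particular $\abs{M_{kk}} = 2\abs{\sin(\pi k/N)}$, of order $\abs{k}/N$ for $\abs{k} \ll N$. Thus sampling $(\rA x)_k$ at $k\neq 0$ is equivalent, up to the weight $M_{kk}^{-1}$, to sampling $(\rA\rD x)_k$, and the variable density $p(n)\propto 1/\max\{1,\abs{n}\}$ is exactly what is needed to cancel the $\abs{M_{kk}}^{-1}\sim N/\abs{k}$ factor. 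This turns TV recovery into a well-balanced $\ell^1$-recovery problem for $w = \rD x$ from non-uniform Fourier samples.

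Setting $h = \hat x - x$, my first step would be to derive the bound on $\nmu{\rD h}_2$ by duality. I would construct an inexact dual certificate of the form $\eta = \rA^* \rP_\Omega^* \rho$ via a variant of Gross's golfing scheme: partition $\Omega$ into a logarithmic number of independent batches and refine $\eta$ iteratively so that $\rD^* \eta$ matches $\mathrm{sgn}((\rD x)_j)$ on $\Delta$ and has magnitude strictly smaller than $1$ off $\Delta$. The hybrid sampling plays a specific role: $\Omega_2$ restores isotropy after the $M^{-1}$ reweighting (drawing $k$ with density $\propto 1/\abs{k}$ makes $M_{kk}^{-1}(\rA x)_k$ behave like an almost uniform sample of $\rA w$), while $\Omega_1$ controls residual uniform terms. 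Standard bounded-orthonormal-system arguments then deliver, for $m\gtrsim s\log N (1+\log(\epsilon^{-1}))$, a restricted isometry-type property on $\Delta$-supported vectors together with the first estimate
\[
\nmu{\rD x - \rD\hat x}_2 \lesssim \delta\sqrt{s} + \cL_2 \cdot \nmu{\rP_\Delta^\perp \rD x}_1/\sqrt{s}.
\]

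To pass from the gradient error to an $\ell^2$ bound on $h$ itself, I would use a low/high frequency split. Pick a cutoff $k_0$ of order $s$ times polylog factors. By Plancherel,
\[
\tfrac{1}{N}\nmu{h}_2^2 \lesssim \tfrac{1}{N^2}\sum_{\abs{k}\leq k_0}\abs{(\rA h)_k}^2 + \tfrac{1}{N^2}\sum_{\abs{k}>k_0}\abs{M_{kk}}^{-2}\abs{(\rA\rD h)_k}^2.
\]
The high-frequency sum is bounded by $(N/k_0)^2\,\nmu{\rD h}_2^2/N$, and taking $k_0$ of order $s$ against the first bound produces the desired $\delta^2/s$ scaling. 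The low-frequency sum is controlled by the data-fidelity constraint $\nmu{\rP_\Omega\rA h}_2 \leq 2\sqrt{m}\delta$, provided every $\abs{k}\leq k_0$ lies in $\Omega$; a Chernoff argument on the variable density $p(n)\propto 1/(\abs{n}\log N)$ with $m\gtrsim s\log N$ guarantees this with high probability. The extra logs in $\cL_1$ arise from the precise $k_0$ needed for coverage and from the dyadic bookkeeping in the low-frequency sum.

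The hardest step, I expect, is the construction of the dual certificate for the composite operator $\rD^* \rA^* \rP_\Omega^*$. The one-dimensional kernel of $\rD$ (constants) forces us to track the mean of $h$ separately, and the frequency-dependent weighting $M_{kk}^{-1}$ destroys the uniform incoherence that standard compressed-sensing proofs rely on. Choosing $\Omega_2$ according to $p(n)\propto 1/\max\{1,\abs{n}\}$ is the crucial trick for restoring incoherence, and verifying that the golfing iterations converge with the stated $\cL_2$ dependence — while keeping enough samples from $\Omega_1$ in reserve to absorb the residual uniform-coverage error — is where the bulk of the technical work lies. Once the certificate is in hand, the two error bounds follow relatively cleanly from the decomposition above.
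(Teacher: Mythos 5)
Your first step (the bound on $\nmu{\rD x - \rD\hat x}_2$) is essentially the paper's: a golfing-scheme dual certificate for the $\ell^1$ problem in $w=\rD x$, using the identity $(\rA\rD z)_k = (1-e^{2\pi i k/N})(\rA z)_k$. One correction of emphasis, though: in the paper this step uses \emph{only} the uniform samples $\Omega_1$ (via the Cand\`es--Plan certificate, Proposition \ref{prop:dual_existence_inc}). No reweighting by $M_{kk}^{-1}$ is needed there, because passing from samples of $\rA z$ to samples of $\rA\rD z$ multiplies by $M_{kk}$ (with $\abs{M_{kk}}\leq 2$), and the DFT is already maximally incoherent with the standard basis in which $\rD x$ is sparse. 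The factor $M_{kk}^{-1}\sim N/\abs{k}$ only enters when inverting $\rD$, i.e.\ in the second step, and that is the only place $\Omega_2$ is used.

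The genuine gap is in your second step. The high-frequency half of your Plancherel split gives
$\frac{1}{N^2}\sum_{\abs{k}>k_0}\abs{M_{kk}}^{-2}\abs{(\rA\rD h)_k}^2 \lesssim \frac{N}{k_0^2}\nmu{\rD h}_2^2$,
so with $k_0\asymp s$ and $\nmu{\rD h}_2\lesssim \delta\sqrt{s}$ you obtain $\nmu{h}_2/\sqrt{N}\lesssim \sqrt{N}\,\delta/\sqrt{s}$ --- a factor $\sqrt{N}$ worse than the claimed $\delta/\sqrt{s}$ (and similarly for the $\nmu{\rP_\Delta^\perp\rD x}_1$ term). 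Pushing $k_0$ up to $\sqrt{N}s$ to fix this destroys the low-frequency half: your coverage claim already fails at $k_0\asymp s$, since with $m\asymp s\log N$ draws from $p(n)\asymp (\abs{n}\log N)^{-1}$ the expected number of hits on a frequency $\abs{n}\asymp s$ is $O(1)$, so a constant fraction of the frequencies near $k_0$ are missed with constant probability. Controlling $\abs{(\rA h)_k}$ by $\abs{M_{kk}}^{-1}$ times an $\ell^2$ bound on $\rA\rD h$ is simply too lossy; it is essentially a Poincar\'e-type estimate and cannot beat the $\sqrt{N}$ loss. The paper instead routes the second step through the Haar domain: the Haar coefficients of $h$ decay like $j^{-3/2}\sqrt{N}\nmu{h}_{TV}$ (Lemma \ref{lem:discrete_haar_decay}), i.e.\ compressibility is quantified by the $\ell^1$ norm of the gradient rather than its $\ell^2$ norm, and the variable-density samples $\Omega_2$ make the preconditioned Fourier--Haar matrix $\frac{1}{\sqrt{m}}\rX\rP_\Omega\tilde\rA\rW^*$ satisfy the RIP (Lemma \ref{lem:RIP}), which transfers the data-fidelity constraint to the dominant Haar coefficients without requiring any individual frequency to be sampled (Lemma \ref{lem:strong_sobolev} and Proposition \ref{prop:gradtosignal_stability}). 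You would need to replace your frequency-split argument with something of this kind to reach the stated $\cL_1(\delta/\sqrt{s}+\cL_2\nmu{\rP_\Delta^\perp\rD x}_1/s)$ bound.
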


\begin{remark}\label{rem:optimal}

 To address the optimality of this result, first observe that in the case where there is exact sparsity of level $s$ and no noise, this result guarantees exact recovery from $\ord{s\log N}$ samples, which is the optimal sampling cardinality for gradient $s$-sparse signals \cite{candes2006robust}. In the presence of noise and inexact sparsity, recall from \cite[Theorem 9.14]{mallat2008wavelet} that if $f\in BV[0,1)$ (the space of bounded variation functions which is formally defined in Definition \ref{def:BV}), then the optimal 
error-decay rate for all bounded variation functions by any
type of nonlinear approximation $\tilde f$ from $s$ samples is $\nmu{\tilde f - f}_{L^2[0,1)} = \ord{\nm{f}_V \cdot s^{-1}}$.  Ignoring the contribution from the noise level $\delta$ and the $\log$ factors, the error decay obtained by our theorem is bounded by $\ord{\nm{x}_{TV} \cdot s^{-1}}$ and this is optimal by comparison to the optimal error bounds achievable for bounded variation function. Thus, one can improve upon this result only by removing the $\log$ factors in the error bound.

Note also that the factor of $N^{-1/2}$ on the left hand side of the error bound on the recovered signal naturally arises from the discretization of functions when one links functions of infinite dimensions with their discrete counterparts (see for example, \cite{leveque2007finite}). In general, given $v\in\bbC^{N^d}$, the discrete $\ell^p$ norm is defined to be
$$
\nm{v}_{p, \mathrm{discrete}}^p := \sum_{j\in [N]^d} \abs{v_j}^p N^{-d},
$$
where $[N] = \br{1,\ldots, N}$. Furthermore, the gradient operator considered in the context of discretized functions is $\tilde \rD = N \rD$, where $\rD$ is the finite differences operator defined previously. In the case of $d=1$, given $v\in\bbC^N$,
$$
\nm{v}_{2, \mathrm{discrete}} = N^{-1/2} \nm{v}_2, \qquad \nmu{\tilde \rD v}_{1, \mathrm{discrete}} = \nm{\rD v}_1.
$$
So the occurrence of the $\sqrt{N}$ above is natural.  Note, however, that there is no discrepancy of $N^{1/2}$ in the case of $d=2$: by letting $\tilde \rD = N \rD$, where $\rD$ is now the differences operator for two dimensional vectors (defined in (\ref{eq:discrete_2d_TV}), given any  $v\in\bbC^{N^2}$,
$$
\nm{v}_{2, \mathrm{discrete}} = N^{-1} \nm{v}_2, \qquad \nmu{\tilde \rD v}_{1, \mathrm{discrete}} = N^{-1} \nm{\rD v}_1.
$$
\end{remark}

\begin{remark}
The restriction of $N=2^J$ arises because part of this result will exploit the relation between Haar wavelet coefficients and total variation. However,  this restriction is likely to be an artefact of the proof techniques applied in this paper and not required in practice.
\end{remark}

\begin{remark}
Throughout this paper, drawing $m$ samples uniformly at random refers to sampling \textit{without} replacement. Note however that the samples indexed by $\Omega_2$ are chosen independently and are not necessarily unique.
\end{remark}

To state the two dimensional result, we define the two dimensional $\ell^p$ norm, the discrete Fourier transform, and the discrete gradient operator for two dimensional vectors. For $p\geq 1$, given any $z\in\bbC^{N\times N}$, let $\nm{z}_p^p  = \sum_{j=1}^N\sum_{k=1}^N \abs{z_{k,j}}^p$.
Let $\rA$ be the discrete Fourier transform on $\bbC^{N\times N}$ such that given $z\in\bbC^{N\times N}$,
\be{\label{def:2d_DFT}
\rA z = \left(\sum_{j_1=1}^N\sum_{j_2=1}^N z_{j_1,j_2} e^{2\pi i (j_1 k_1 +j_2 k_2)/N}\right)_{k_1,k_2=-\lfloor  N/2 \rfloor +1}^{\lceil  N/2 \rceil}
}
Define the vertical gradient operator as
$$
\rD_1: \bbC^{N\times N} \to \bbC^{N\times N}, \quad
x \mapsto (x_{j+1,k}-x_{j,k})_{j,k=1}^N
$$
with $x_{N+1,k} = x_{1,k}$ for each $k=1,\ldots, N$ and the horizontal gradient operator as
$$
\rD_2: \bbC^{N\times N} \to \bbC^{N\times N}, \quad
x \mapsto (x_{j,k+1}-x_{j,k})_{j,k=1}^N
$$
with $x_{j,N+1} = x_{j,1}$ for each $j=1,\ldots, N$. Now define the gradient operator $\rD: \bbC^{N\times N}\to \bbC^{N\times N}$ as
\be{\label{eq:discrete_2d_TV}
\rD x = \rD_1 x+ i \rD_2 x,
} and the isotropic total variation (semi) norm as
$$
\nmu{x}_{TV} = \nmu{\rD x}_1.
$$
Given any $\Lambda \subset \bbZ^2$, and $x\in\bbC^{N\times N}$, $\rP_\Lambda: \bbC^{N\times N} \to \bbC^{N \times N}$ is the projection operator such that $\rP_{\Lambda} x$ is the restriction of $x$ to its entries indexed by $\Lambda$.

\begin{theorem}\label{cor:unif_samp_2D_opt}
Let $N = 2^J$ for some $J \in\bbN$. Let $x\in\bbC^{N\times N}$. Let $\epsilon\in (0,1)$, and let $\Delta \subset \br{1,\ldots, N}^2$ with $\abs{\Delta}= s$. Let $\Omega = \Omega_1\cup \Omega_2$ where $\Omega_1,\Omega_2 \subset \br{-\lfloor  N/2 \rfloor +1,\ldots, \lceil  N/2 \rceil}^2$ consist of  $m$ indices each, with
$$
m \gtrsim   s\cdot  \left(1+\log(\epsilon^{-1})\right)\cdot \log\left(N\right).
$$  Let $\Omega_1$ be chosen uniformly at random, and let $\Omega_2 = \br{k_1,\ldots, k_m}$ consist of  i.i.d. indices such that  for each $j=1,\ldots, m$,  and $n,m = -N/2+1,\ldots, N/2$,
$$
\bbP(k_j = (n, m)) = p(n, m), \quad p(n,m) = C' \left( \log(N)\max\br{1, \abs{n}^2 + \abs{m}^2} \right)^{-1},
$$
where $C'>0$ is such that $p$ is a probability measure. Let $\Omega = \Omega_1\cup \Omega_2$ and suppose that $y\in\bbC^N$ is such that $\nm{y-\rP_\Omega \rA x}_2\leq  \sqrt{m} \cdot \delta$ for some $\delta\geq 0$.
Then, with probability exceeding $1-\epsilon$, any minimizer $\hat x$ of 
\be{\label{eq:tvcs_2d}
\min_{z\in\bbC^{N\times N}} \nmu{z}_{TV} \text{ subject to } \nmu{\rP_\Omega \rA z - y}_2 \leq \sqrt{m}\cdot \delta
}
satisfies
\bes{\nmu{\rD x  - \rD \hat x}_2 \lesssim \left(\delta \cdot \sqrt{s} + \cL_2 \cdot \frac{\nmu{\rP_\Delta^\perp \rD x}_1}{\sqrt{s}}\right),\qquad
\nmu{x-\hat x}_2
\lesssim   \cL_1\cdot \left(   \delta + \cL_2 \cdot \frac{\nmu{\rP_\Delta^\perp\rD x}_1}{\sqrt{s}}\right),
}
where
$
\cL_1 =  \log(s)\log(N^2/s)\log^{1/2}(N)\log^{1/2}(m)
$, and $\cL_2 = \log^{1/2}(m)\log(s)$.

\end{theorem}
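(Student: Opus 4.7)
The plan is to mirror the strategy behind Theorem \ref{thm:near_optimal} after making two adaptations specific to the two-dimensional setting. First, the isotropic total variation $\nmu{\rD x}_1 = \nmu{\rD_1 x + i\rD_2 x}_1$ is a complex $\ell^1$ norm, so the sign vector entering the dual certificate takes values in the unit disk of $\bbC$ rather than in $\{-1,0,1\}$. Second, the inequality that converts a bound on $\nmu{\rD h}_2$ into one on $\nmu{h}_2$ now loses only a $\log N$ factor rather than a $\sqrt{N}$ factor, which is precisely why the prefactor $N^{-1/2}$ present in Theorem \ref{thm:near_optimal} does not appear here (cf.\ Remark \ref{rem:optimal}).

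I would set $h = \hat x - x$, use feasibility to obtain $\nmu{\rP_\Omega \rA h}_2 \leq 2\sqrt{m}\,\delta$, and derive from TV optimality the standard cone inequality $\nmu{\rP_\Delta^\perp \rD h}_1 \leq \nmu{\rP_\Delta \rD h}_1 + 2\nmu{\rP_\Delta^\perp \rD x}_1$. Crucially, both $\rD_1$ and $\rD_2$ are simultaneously diagonalised by the Fourier transform, so $\rA \rD = M\rA$ where $M$ is the diagonal matrix with entries $\mu_{k_1,k_2} = (1-e^{-2\pi i k_1/N}) + i(1-e^{-2\pi i k_2/N})$. This yields the identity $\langle \rD h, \rho\rangle = \langle \rA h, M^*\rho'\rangle$ whenever $\rA\rho = M^*\rho'$, converting Fourier measurements into gradient information.

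The heart of the argument is to construct, with probability at least $1-\epsilon$, an inexact dual certificate $\rho$ in the range of $\rA^*\rP_\Omega$ such that $\nmu{\rP_\Delta\rho - u}_2$ is small and $\nmu{\rP_\Delta^\perp\rho}_\infty \leq 1/2$, where $u = \mathrm{sgn}(\rP_\Delta \rD x)$ in the complex sense. I would build $\rho$ via a golfing-type scheme: partition $\Omega_1\cup\Omega_2$ into $L \sim \log s$ independent batches and assemble $\rho = \sum_\ell \rho_\ell$ iteratively, each $\rho_\ell$ driving down the residual on $\Delta$. The variable-density choice $p(n,m)\sim 1/(n^2+m^2)$ is calibrated so that, after preconditioning by $M$, the rescaled sensing vectors $\mu_{k_1,k_2}\, e^{2\pi i (k_1 j_1 + k_2 j_2)/N}/\sqrt{p(k_1,k_2)}$ have uniformly bounded magnitude; this is what allows coherence-based matrix Bernstein estimates to deliver the golfing inequalities at the stated sample cardinality. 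The uniform piece $\Omega_1$ supplies the complementary non-degeneracy at high frequencies, where $\abs{\mu_{k_1,k_2}}$ is bounded below.

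With the certificate in place, a tube/cone argument delivers $\nmu{\rD h}_2 \lesssim \delta\sqrt{s} + \cL_2 \cdot \nmu{\rP_\Delta^\perp \rD x}_1/\sqrt{s}$. For the signal-level error I would invoke the two-dimensional discrete Poincaré inequality $\nmu{h - \bar h}_2 \lesssim \log(N)\,\nmu{\rD h}_2$, with the mean $\bar h$ controlled through the zero frequency sample; combined with the Needell--Ward link between gradient sparsity and Haar wavelet compressibility (which is clean in two dimensions, unlike in one dimension where the $N^{-1/2}$ discrepancy arises), this produces the extra $\log(N^2/s)$ factor appearing in $\cL_1$. The hardest step will be executing the golfing construction in this weighted 2D regime: one must simultaneously control the off-support $\ell^\infty$ norm of each $\rho_\ell$ and the $\ell^2$ size of the residual on $\Delta$, while carefully tracking how the diagonal $M$ and the reweighting $1/\sqrt{p}$ interact across the low and high frequency regimes. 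This interplay is ultimately what fixes the precise $\log$ exponents appearing in $\cL_1$ and $\cL_2$.
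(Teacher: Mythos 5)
There is a genuine gap, and it stems from misallocating the roles of $\Omega_1$ and $\Omega_2$. In the paper the two sampling sets do entirely separate jobs. The dual certificate and the (weak) RIP needed for stable \emph{gradient} recovery come from $\Omega_1$ alone: since the two-dimensional DFT has coherence $\mu(\rA)=1$, Proposition \ref{prop:dual_existence_inc} (the off-the-shelf Cand\`es--Plan golfing construction) applies verbatim to the uniform samples with the sign pattern $\sgn(\rP_\Delta \rD x)$, and the commutation $\rA\rD = \mathrm{diag}(v)\rA$ is only used to turn the feasibility bound $\nmu{\rP_\Omega\rA h}_2\leq 2\sqrt{m}\delta$ into $\nmu{\rP_\Omega\rA\rD h}_2\leq 4\sqrt{m}\delta$. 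No reweighting of the golfing scheme by the gradient multiplier $M$ is needed or performed, and your claim that $\mu_{k_1,k_2}e^{2\pi i(k_1j_1+k_2j_2)/N}/\sqrt{p(k_1,k_2)}$ is uniformly bounded is false as stated: $\abs{\mu_k}\sim \sqrt{k_1^2+k_2^2}$ (up to normalisation) and $1/\sqrt{p(k)}\sim\sqrt{\log N}\max\{1,\sqrt{k_1^2+k_2^2}\}$, so the product \emph{grows} with frequency. Spending $\Omega_2$ on the certificate therefore rests on an unverified (and likely unsalvageable) boundedness claim, and it leaves nothing with which to run the final step.

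The second, decisive problem is the passage from the gradient bound to the signal bound. You invoke a ``two-dimensional discrete Poincar\'e inequality $\nmu{h-\bar h}_2\lesssim \log(N)\nmu{\rD h}_2$''; no such inequality is used or available. The correct discrete Poincar\'e inequality (Lemma \ref{lem:poincare}(ii)) reads $\nmu{h}_2\leq\nmu{\rD h}_1$ and, used alone, yields only the weaker conclusion of Theorem \ref{cor:unif_samp_2D} (error $\lesssim \delta\sqrt{s}+\cL\nmu{\rP_\Delta^\perp\rD x}_1$, with no gain of $\sqrt{s}$). The entire purpose of $\Omega_2$ is to replace Poincar\'e by the strengthened Sobolev-type inequality of Proposition \ref{prop:gradtosignal_stability_2d}: one uses $\nmu{\rP_{\Omega_2}\rA h}_2\leq 2\sqrt{m}\delta$, the RIP of the \emph{preconditioned Fourier--bivariate-Haar} matrix $\frac{1}{\sqrt{m}}\rX\rP_{\Omega_2}\rV\tilde\rW^*$ (Lemma \ref{lem:RIP_2d}, which is where the weights $p(n,m)^{-1}\sim\log(N)\max\{1,n^2+m^2\}$ are actually calibrated), and the decay $\abs{c_j}\leq C\nm{h}_{TV}/j$ of the bivariate Haar coefficients of a mean-zero array (Lemma \ref{lem:discrete_haar_decay_2d}) to obtain $\nmu{h}_2\lesssim\sqrt{\log N}\,\delta+\nm{h}_{TV}\log(N^2/s)\,\mathrm{polylog}/\sqrt{s}$; substituting the Step~II bound $\nm{h}_{TV}\lesssim\delta\sqrt{s}+\sqrt{s/r}\,\nmu{\rP_\Delta^\perp\rD x}_1$ then produces $\cL_1$ and the $1/\sqrt{s}$ in the stated error. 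You gesture at the Haar compressibility but do not identify this RIP-based cone argument in the Haar domain, which is the step that actually delivers the optimal (up to logs) bound.
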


\begin{remark}
Up to the $\log$ factors, the error bound is typical of compressed sensing results.
To understand the optimality of this result, we first recall from \cite{candes2006near} (which used results of Kashin \cite{kashin1977diameters} and Garnaev and Gluskin \cite{ garnaev1984widths}) that given any reconstruction method $\rF$ and any $s\log(N/s)$ non-adaptive linear measurements of a signal $x\in\bbC^N$, which we will denote by $\rS(x)$, we have that
$$\sup_{x:\nm{x}_1 \leq 1} \br{\nm{x-\rF(y)}_2 : y=\rS(x)} \geq \frac{C}{\sqrt{s}}
$$ 
for some constant $C >0$, and the reconstruction error bound is necessarily bounded from below by a constant times $s^{-1/2}$. Thus, as argued in  \cite{needell2013stable}, the best error bound which one can hope for is
$$
\nm{x-\hat x}_2 \leq C  \nm{\rP_\Delta^\perp \rD x}_1 s^{-1/2},
$$ otherwise we would arrive at a contradiction by using $\nm{\rD x-\rD \hat x}_2 \lesssim \nm{x-\hat x}_2$.  Therefore, the error bounds of Theorem \ref{cor:unif_samp_2D_opt} can only be improved by removing $\log$ factors.
\end{remark}

\begin{remark}
The results of this section 
 consider the case where the sampling set is a combination of uniform random sampling and sampling in accordance with a decaying distribution. This decaying distribution was also studied in \cite{ward2013stable} for the recovery of vectors from their Fourier coefficients using total variation regularization, however, there are substantial differences between our results and the results in \cite{ward2013stable}. First, the results in \cite{ward2013stable} considered a nonstandard noise model, where the Fourier transform is multiplied by a weighted diagonal matrix and do not cover the recovery of one dimensional vectors. Second, even in the two dimensional case, their results guarantee recovery only up to gradient sparsity level $s$ from $\ord{s\log^5(N) \log^3(s) }$ samples, as opposed to $\ord{s\log(N)}$ samples in our results.

\end{remark}

\subsection{Stable and robust recovery from uniform random sampling}\label{sec:main_1}

The previous section demonstrated how one can combine  uniform random sampling and variable density sampling to guarantee recovery which is optimal up to $\log$ factors. This section will show that uniform random sampling on its own can still achieve stable and robust recovery, albeit with non-optimal error estimates. Even though it is open as to whether  better error bounds are possible if one restricts to uniform random sampling, we will present some numerical examples in Section \ref{sec:num_stab} to suggest that any  improvement over the results of this section will be limited.
\begin{theorem}\label{thm:unif_samp}
For $N\in\bbN$, let $\rA$ be the discrete Fourier transform  and  let $\rD$ be the discrete gradient operator on $\bbC^N$ from Theorem \ref{thm:original}. Let $x\in\bbC^N$.
Let $\epsilon\in (0,1)$, and let $\Delta \subset \br{1,\ldots, N}$ with $\abs{\Delta}= s$. 
Let $x\in\bbC^N$  and let $\Omega = \Omega'\cup \br{0}$ where $\Omega' \subset \br{-\lfloor  N/2 \rfloor +1,\ldots, \lceil  N/2 \rceil}$ consists of  $m$ indices chosen uniformly at random with
\be{\label{eq:num_samples_unif}
m \gtrsim   s\cdot \left(1+ \log( \epsilon^{-1})\right)\cdot \log\left(N\right) 
}
for some numerical constant $C$. Suppose that $y = \rP_\Omega \rA x + \eta$ where $\norm{\eta}_2 \leq \sqrt{m}\cdot \delta$.
Then with probability exceeding $1-\epsilon$, any minimizer $\hat x$ of 
$$
\min_{z\in\bbC^N} \nmu{z}_{TV} \text{ subject to } \nmu{\rP_\Omega \rA z - y}_2 \leq \sqrt{m}\cdot \delta$$
satisfies
\bes{\nmu{\rD x  - \rD \hat x}_2 \lesssim \left(\delta\cdot \sqrt{s} + \cL \cdot \frac{\nmu{\rP_\Delta^\perp \rD x}_1}{\sqrt{s}}\right),\qquad
\frac{\nmu{x-\hat x}_2}{\sqrt{N}}
\lesssim  \left(  \delta \cdot \sqrt{s}+  \cL\cdot\nmu{\rP_\Delta^\perp\rD x}_1\right),
}
where  $\cL =  \log^{1/2}(m) \log(s)$. 
\end{theorem}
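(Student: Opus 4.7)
The plan is to combine the commutation identity $\rA \rD = M\rA$, where $M = \operatorname{diag}(1 - e^{-2\pi i k/N})$, with a dual-certificate stability argument, echoing the structure used for Theorem~\ref{thm:near_optimal} but with the variable-density component of $\Omega$ removed. Set $h = \hat x - x$ and $u = \rD h$. Feasibility of $x$ together with optimality of $\hat x$ yields the tube bound $\nmu{\rP_\Omega \rA h}_2 \leq 2\sqrt m\,\delta$, and TV-minimality yields the cone inequality $\nmu{\rP_\Delta^\perp u}_1 \leq \nmu{\rP_\Delta u}_1 + 2\nmu{\rP_\Delta^\perp \rD x}_1$. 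The pinning $0\in\Omega$ also controls the mean via $\abs{(\rA h)_0} \leq 2\sqrt m\,\delta$, which will be crucial for the signal-error bound at the end.

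The technical core is the construction of an inexact dual certificate: a pair $(w,q)$ with $w \in \bbC^m$ and $q \in \bbC^N$ satisfying $\rA^* \rP_\Omega^* w = \rD^* q$, $\rP_\Delta q \approx \operatorname{sgn}(u_\Delta)$, $\nmu{\rP_\Delta^\perp q}_\infty \leq 1/2$, and $\nmu{w}_2 \lesssim \sqrt{s/m}$ up to log factors. I would construct $(w,q)$ via a golfing scheme over independent batches of $\Omega$, each iterate driving down the off-support error. The two probabilistic inputs needed are the on-support isometry bound $\nmu{\rP_\Delta((1/m)\rA^*\rP_\Omega^*\rP_\Omega\rA - I)\rP_\Delta} \lesssim 1/\log s$ and the off-support coherence bound $\sup_{j\notin\Delta}\nmu{\rP_\Delta^\perp (1/m)\rA^*\rP_\Omega^*\rP_\Omega\rA e_j}_\infty \lesssim \sqrt{\log(N)/m}$, both obtained from matrix and vector Bernstein inequalities applied to the random normal equations matrix $(1/m)\rA^*\rP_\Omega^*\rP_\Omega\rA$. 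These hold under the sampling rate $m \gtrsim s\log(N)(1 + \log(\epsilon^{-1}))$, and the factor $\cL = \log^{1/2}(m)\log(s)$ arises from the $O(\log s)$ golfing iterations.

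Given the certificate, the duality identity $\langle u, q\rangle = \langle \rP_\Omega \rA h, w\rangle$ combined with the cone and tube inequalities yields $\nmu{u_\Delta}_1 \lesssim \sqrt m\,\delta\,\nmu{w}_2 + \nmu{\rP_\Delta^\perp \rD x}_1 \lesssim \sqrt s\,\delta + \cL\nmu{\rP_\Delta^\perp \rD x}_1$, and the cone then gives $\nmu{u}_1 \lesssim \sqrt s\,\delta + \cL\nmu{\rP_\Delta^\perp \rD x}_1$. An $\ell^2$ refinement of the same argument, using the $\ell^2$ version of the coherence estimate to control $\nmu{\rP_\Delta u}_2$, produces the gradient-error bound $\nmu{\rD h}_2 \lesssim \sqrt s\,\delta + \cL\nmu{\rP_\Delta^\perp \rD x}_1/\sqrt s$. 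The signal-error bound then follows from a discrete Poincar\'e-type estimate $\nmu{h}_2/\sqrt N \leq \nmu{h}_\infty \lesssim \nmu{\rD h}_1 = \nmu{u}_1$, valid for $h$ whose mean is bounded by the noise level. The main obstacle is the dual certificate itself: unlike the variable-density sampling of Theorem~\ref{thm:near_optimal}, uniform sampling does not cancel the weighting $M^{-1}$ coming from the commutation, so the certificate must be built in the $x$-variable and the resulting Bernstein estimates for $(1/m)\rA^*\rP_\Omega^*\rP_\Omega\rA$, once translated through $\rD^*$, produce strictly weaker constants on both the noise and the inexact-sparsity terms than in Theorem~\ref{thm:near_optimal}.
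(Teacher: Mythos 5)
Your overall strategy is the same as the paper's: transfer the data-fidelity constraint to the gradient domain via the commutation $\rA\rD = M\rA$ with $\abs{M_{kk}}\leq 2$, obtain a golfing-scheme dual certificate for the $s$-sparse vector $\rD x$ with respect to the uniformly subsampled Fourier matrix (the paper imports this wholesale from the RIPless theory of Cand\`es and Plan as Proposition \ref{prop:dual_existence_inc}), run the cone-and-tube argument for the gradient error, and finish with a Poincar\'e-type estimate using $0\in\Omega$. Two points need repair. First, your $\ell^2$ step is underspecified and your accounting of where $\cL$ comes from is wrong: controlling $\nmu{\rP_\Delta u}_2$ via the on-support isometry and the single-column coherence bound does not control $\nmu{\rP_{\Delta}^\perp u}_2$, hence not $\nmu{u}_2$. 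The paper closes this by invoking a \emph{weak RIP over extended supports} $\Delta\cup\Gamma$, $\abs{\Gamma}\leq r$, with $r \asymp m/\left(\log(N)\log(m)\log^2(s)\right)$ (condition (vi) of Proposition \ref{prop:dual_existence_inc}), partitioning $\Delta^c$ into blocks of size $r$ and using $\sum_{j\geq 2}\nmu{\rP_{\Delta_j}u}_2\leq r^{-1/2}\nmu{\rP_\Delta^\perp u}_1$; the factor $\cL=\log^{1/2}(m)\log(s)$ is precisely $\sqrt{s/r}$ under the sampling rate $m\gtrsim s\log N$, not an artefact of the $O(\log s)$ golfing iterations (those only affect $\nmu{w}_2$ and the interpolation accuracy on $\Delta$). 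Without this extended-support isometry, the two Bernstein estimates you list cannot yield the claimed $\ell^2$ gradient bound. Second, your closing concern about the weighting $M^{-1}$ is a red herring: the certificate is constructed directly for $\sgn(\rP_\Delta\rD x)$ with sensing rows $\rP_{\Omega'}\rA$ (coherence $1$), so $M$ is never inverted --- only $\abs{1-e^{2\pi i k/N}}\leq 2$ is used to transfer the tube constraint. The suboptimality of the signal bound relative to Theorem \ref{thm:near_optimal} comes entirely from replacing the Haar/RIP-based strong Sobolev inequality by the Poincar\'e inequality, not from any weighting loss in the certificate.
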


Note that although the  bound for the signal error is no longer optimal, the  bound on the  gradient error is still optimal up to $\log$ factors by the optimality discussion of Remark \ref{rem:optimal}.
For the case of recovering two dimensional vectors, we have the following result.

\begin{theorem}\label{cor:unif_samp_2D}
Let $N\in\bbN$.
Let $\rA$ be the discrete Fourier transform, let $\rD$ be the discrete gradient operator, and let $\nm{\cdot}_{TV}$ be the total variation norm for two dimensional vectors in $\bbC^{N\times N}$. Let $x\in\bbC^{N\times N}$.
Let $\epsilon\in (0,1)$, and let $\Delta \subset \br{1,\ldots, N}^2$ with $\abs{\Delta}= s$. Let $\Omega = \Omega'\cup \br{0}$, where $\Omega' \subset \br{-\lfloor  N/2 \rfloor +1,\ldots, \lceil  N/2 \rceil}^2$ consists of  $m$ indices chosen uniformly at random with
$$
m \gtrsim  s\cdot  \left(1+\log(\epsilon^{-1})\right)\cdot \log\left(N\right).
$$ 
Suppose that $y = \rP_\Omega \rA x + \eta$ where $\norm{\eta}_2 \leq \sqrt{m}\cdot \delta$.
Then, with probability exceeding $1-\epsilon$, any minimizer $\hat x$ of 
\bes{
\min_{z\in\bbC^{N\times N}} \nmu{z}_{TV} \text{ subject to } \nmu{\rP_\Omega \rA z - y}_2 \leq \sqrt{m}\cdot \delta
}
satisfies
\bes{\nmu{\rD x  - \rD \hat x}_2 \lesssim \left(\delta\cdot \sqrt{s} + \cL \cdot \frac{\nmu{\rP_\Delta^\perp \rD x}_1}{\sqrt{s}}\right),\qquad
\nmu{x-\hat x}_2 \lesssim \left(  \delta \cdot \sqrt{s}+  \cL\cdot \nmu{\rP_\Delta^\perp\rD x}_1\right),
}
where  $\cL = \log^{1/2}(m) \log(s)$.
\end{theorem}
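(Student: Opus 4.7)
The plan is to mirror the proof of the one-dimensional counterpart, Theorem \ref{thm:unif_samp}, and lift each ingredient to the two-dimensional setting by exploiting the commutativity of the periodic gradient with the $2$D DFT. Writing $\rD = \rD_1 + i\rD_2$, one has $\rA \rD_j = M_j \rA$ with $M_j$ a diagonal Fourier multiplier with entries of the form $e^{-2\pi i k_j/N} - 1$. Because the sample at the zero frequency is included in $\Omega$, the mean of $\hat x$ is pinned to that of $x$, and the TV problem reduces to an $\ell^1$-recovery problem in the gradient domain: recover the jointly sparse $g = \rD x \in \bbC^{N\times N}$ from noisy measurements that are, up to the multipliers $M_j^{-1}$, a uniformly random subset of rows of the $2$D DFT.

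Next I would construct an inexact dual certificate for the recovery of $g$ via the golfing scheme, following the $1$D argument used for Theorem \ref{thm:unif_samp}. Two probabilistic estimates must be verified: a \emph{local near-isometry} of the reweighted measurement operator when restricted to indices in $\Delta$, and an \emph{off-diagonal sup-norm bound} controlling its action on $\Delta$-supported vectors at coordinates outside $\Delta$. Both follow from scalar Bernstein and matrix Bernstein inequalities applied to sums of independent rank-one terms indexed by the uniformly drawn frequencies, using that the $2$D DFT has coherence of order $1/N$ in the $\bbC^{N\times N}$ inner product, so that $m \gtrsim s\log N$ samples suffice. Iterating the golfing scheme $\ord{\log s}$ times with batches of size $\ord{s\log N}$ produces a certificate whose residual and subgradient error carry the factor $\cL = \log^{1/2}(m)\log(s)$; routine duality arguments then yield the claimed bound on $\nmu{\rD x - \rD\hat x}_2$.

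For the $\ell^2$ signal bound I would invoke the discrete two-dimensional Poincar\'e--Sobolev inequality $\nmu{h - \bar h}_2 \lesssim \nmu{\rD h}_1$, valid for $h \in \bbC^{N\times N}$ with periodic boundary conditions. This is the decisive ingredient that is unavailable in one dimension and accounts for the absence of the $N^{-1/2}$ normalization present in Theorem \ref{thm:unif_samp}. Applying it to $h = x - \hat x$ (whose mean vanishes by inclusion of the $k=0$ sample) and combining with the cone-type inequality $\nmu{\rD h}_1 \lesssim \sqrt{s}\,\nmu{\rD h}_2 + \nmu{\rP_\Delta^\perp \rD x}_1$ that comes from the TV-optimality of $\hat x$ together with the gradient bound from the previous step, one reads off the stated estimate for $\nmu{x - \hat x}_2$ up to the polylogarithmic factor $\cL$.

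I expect the main obstacle to be the off-diagonal estimate in the golfing scheme. The inverting multipliers $M_j^{-1}$ have entries as large as $\ord{N}$ near the origin, and, unlike the variable-density component $\Omega_2$ used in Theorem \ref{cor:unif_samp_2D_opt}, uniform random sampling does not downweight these dangerous low-frequency rows. Controlling their contribution will require a careful dyadic decomposition of the frequency lattice together with a per-shell Bernstein estimate tuned to the $2$D geometry, and it is precisely this loss relative to variable-density sampling that forces the extra $\sqrt{s}$ factor in the signal bound and the inability to obtain the sharper $\cL_1,\cL_2$ factors of Theorem \ref{cor:unif_samp_2D_opt}.
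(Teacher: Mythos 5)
Your high-level architecture (dual certificate for $\sgn(\rP_\Delta\rD x)$ obtained by golfing, local near-isometry plus off-support bound, cone condition from TV-optimality, then the two-dimensional Poincar\'e inequality for the signal bound) is the same as the paper's, which simply reruns the proof of Theorem \ref{thm:unif_samp} using Proposition \ref{prop:dual_existence_inc} with the 2D DFT (coherence $\mu(\rA)=1$) and part (ii) of Lemma \ref{lem:stab_grad_stab_sig} in the final step. However, there is a genuine wrong turn in your reduction that creates the ``main obstacle'' you then leave unresolved. You propose to pass to an $\ell^1$ problem for $g=\rD x$ whose measurements involve the \emph{inverse} multipliers $M_j^{-1}$, and you correctly observe that these blow up near the origin under uniform sampling, proposing a dyadic shell decomposition to control them. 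This inversion is unnecessary and should be avoided: the certificate is kept in the form $\rho=\rA^*\rP_{\Omega'}w$ and paired directly with $\rD z$, so that $\langle\rD z,\rho\rangle=\langle\rP_{\Omega'}\rA\rD z,w\rangle$, and the multipliers enter only in the \emph{forward} direction through the identity $(\rA\rD z)_k=v_k(\rA z)_k$ with $\abs{v_k}\leq 2\sqrt 2$ in 2D, giving $\nmu{\rP_{\Omega}\rA\rD z}_2\lesssim\delta\sqrt m$ from the feasibility constraint. No inversion, no dyadic decomposition, and no per-shell Bernstein estimate is needed; the existence of the certificate with the stated constants follows verbatim from Proposition \ref{prop:dual_existence_inc} because the 2D DFT is maximally incoherent. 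As written, your proof is incomplete precisely at the step you flag as hardest.

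A second, smaller point: your diagnosis of why the signal bound here is weaker than in Theorem \ref{cor:unif_samp_2D_opt} is off. The gradient bound is of the same (near-optimal) quality in both theorems; nothing is lost in the certificate construction under uniform sampling. The loss occurs entirely in the passage from gradient error to signal error: without the variable-density samples one cannot invoke the Haar/RIP-based strong Sobolev inequality of Proposition \ref{prop:gradtosignal_stability_2d} (which buys a factor $1/\sqrt s$) and must fall back on the Poincar\'e inequality $\nmu{z}_2\lesssim\delta+\nmu{\rD z}_1$, exactly as you do in your third paragraph. Also note the mean of $x-\hat x$ does not vanish; it is only bounded by $2\delta\sqrt m$, which is why Lemma \ref{lem:stab_grad_stab_sig} carries the additive $\delta$ term.
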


\subsection{Sampling from low Fourier frequencies}\label{sec:main_2}
The final result of this paper considers the reconstruction  of one dimensional vectors when we sample only the low Fourier frequencies. We first require a definition.

\begin{definition} \label{def:min_sep}
Given $N\in\bbN$ and $\Delta = \br{t_1,\ldots, t_s} \subset \br{1,\ldots, N}$ with $t_1<t_2<\cdots <t_s$, let $t_0 = -N+t_s$. Then, the minimum separation distance is defined to be 
$$
\nu_{\min}(\Delta, N) = \min_{j=1}^s  \frac{\abs{t_j - t_{j-1}}}{N}.
$$
\end{definition}

The following result essentially demonstrates that when the large discontinuities of  the underlying signal to be recovered are sufficiently large apart, then we need only to sample from low Fourier frequencies.

\begin{theorem}\label{thm:min_sep_thm}
Let $N\in\bbN$.
Let $\rA$ and $\nmu{\cdot}_{TV}$ be as in Theorem \ref{thm:unif_samp}. Let $x\in\bbC^N$.
Let $\epsilon\in [0,1]$, and let $M\in\bbN$ be such that $N/4 \geq M\geq 10$.  Suppose that $\nu_{\min}(\Delta, N) = \frac{1}{M}$.
Let  $\Omega = \Omega'\cup \br{0}$, where $\Omega' \subset \br{-2M,\ldots, 2M}$ consists of  $m$ indices chosen uniformly at random with
\be{\label{eq:num_samples_min_sep}
m \gtrsim \max\br{ \log^2\left(\frac{M}{\epsilon}\right), \, \log\left(\frac{N}{\epsilon}\right), \, s\cdot \log \left(\frac{s}{\epsilon}\right)\cdot \log\left(\frac{M}{\epsilon}\right) }.
}
Then with probability exceeding $1-\epsilon$, any minimizer $\hat x$ of (\ref{eq:noise_tv}) with $y = \rP_\Omega x + \eta$ and $\nm{\eta}_2\leq \delta\cdot\sqrt{m}$ satisfies
\be{\label{eq:error1}
\frac{\nmu{x-\hat x}_2}{\sqrt{N}}
\lesssim \frac{N^2}{ M^2} \cdot \left(  \delta \cdot s+ \sqrt{s}\cdot \nmu{\rP_\Delta^\perp\rD x}_1\right).
}
Furthermore, if $m=4M+1$, then the error bound (\ref{eq:error1}) holds with probability 1.
\end{theorem}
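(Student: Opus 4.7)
The plan is to exploit the commutation identity $\rA \rD = D \rA$, where $D$ is the diagonal matrix with entries $d_k = 1 - e^{-2\pi i k/N}$, satisfying $|d_k|\asymp |k|/N$ on the low-frequency band $|k|\leq N/4$. This identity lets me recast the TV problem as an $\ell^1$ recovery problem for the gradient $g = \rD x$: each Fourier sample $(\rA x)_k$ for $k\neq 0$ is equivalent to the sample $(\rA g)_k/d_k$, while the $k=0$ sample (which is always included in $\Omega$) merely fixes the mean of $x$. Under the hypothesis $\nu_{\min}(\Delta,N) = 1/M$, the support of the sparse part of $g$ satisfies exactly the minimum separation condition required by the super-resolution dual polynomial constructions of \cite{candes2013super, candes2014towards, tang2012compressive}, and it is these constructions that will drive the proof.

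I would first reduce stable and robust recovery to exhibiting a dual certificate $q \in \mathrm{range}(\rA^* \rP_\Omega)$ for which there is a subgradient $h$ of $\nm{\rD \cdot}_1$ at $x$ with $q = \rD^* h$, such that $h$ interpolates $\mathrm{sign}((\rD x)_j)$ on $\Delta$, is strictly contracting on $\br{1,\ldots,N}\setminus\Delta$, and has appropriate second-derivative-type control near the support. In the deterministic regime $m=4M+1$ (where $\Omega' = \br{-2M,\ldots,2M}$ in its entirety), the squared-Fej\'er-kernel trigonometric polynomial of degree $2M$ from \cite{candes2013super,candes2014towards} provides such a certificate directly from the separation hypothesis, yielding the probability-one claim. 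For the random case, I would combine that deterministic construction with the compressive super-resolution argument of \cite{tang2012compressive}: with $\ord{s\log(s/\epsilon)\log(M/\epsilon)}$ uniformly random samples drawn from $\br{-2M,\ldots,2M}$, a random trigonometric polynomial surrogate (built via a Bernstein/Hoeffding golfing-type iteration confined to the low band) matches the deterministic Fej\'er certificate on the support up to a small residual, which can be iteratively corrected into a valid dual certificate. The residual $\log^2(M/\epsilon)$ and $\log(N/\epsilon)$ terms in (\ref{eq:num_samples_min_sep}) come from the concentration of the random interpolation matrix and from the requirement that the surrogate remain bounded by one on \emph{all} $N-s$ off-support grid positions, not merely on those inside the sampled band.

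Given the certificate, standard inexact-dual arguments in the spirit of \cite{candes2006robust} give a bound of the form $\nmu{\rD x - \rD \hat x}_2 \lesssim \sqrt{s}\,\delta + \nmu{\rP_\Delta^\perp \rD x}_1$, up to logarithms. Converting this to a bound on $\nm{x-\hat x}_2$ is carried out in the Fourier domain via the commutation identity: since $(\rA(x-\hat x))_k = d_k^{-1}(\rA(\rD x - \rD\hat x))_k$ for $k\neq 0$ and $|d_k|^{-1}\lesssim N/|k|$, Parseval gives
\bes{
\frac{\nm{x-\hat x}_2^2}{N} \lesssim \frac{1}{N^2}\sum_{k\neq 0}|d_k|^{-2}\,\abs{(\rA(\rD x - \rD\hat x))_k}^2 + \delta^2.
}
The dual certificate forces the spectral energy of the residual $\rD x - \rD\hat x$ to concentrate on frequencies $|k|\lesssim M$, on which $|d_k|^{-1}\sim N/M$, producing the $(N/M)^2$ prefactor in (\ref{eq:error1}); the remaining factors of $s$ and $\sqrt{s}$ are inherited from the $\ell^2$-to-$\ell^1$ passage on the $s$-sparse support.

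The hard part will be the random-sampling certificate. The classical super-resolution construction relies on the \emph{entire} low-frequency band $\br{-2M,\ldots,2M}$, so the core task is to show that a random subset of size $\ord{s\log(s/\epsilon)\log(M/\epsilon)}$ of this band already supports a surrogate polynomial matching the deterministic Fej\'er certificate in both value and derivative on $\Delta$, \emph{and} that this surrogate, when extended to the full periodic grid of size $N$, remains bounded by one on all of $\br{1,\ldots,N}\setminus\Delta$. This two-layer control---a compressive-sensing argument confined to the low band, combined with an off-band small-ball estimate over the much larger grid complement---is exactly where the interplay between minimum separation, super-resolution and random subsampling becomes most delicate and is the reason (\ref{eq:num_samples_min_sep}) carries both $\log M$ and $\log N$ terms.
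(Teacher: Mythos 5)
There is a genuine gap, and it concerns the single most distinctive feature of this theorem: where the $N^2/M^2$ prefactor in (\ref{eq:error1}) comes from. You attribute it to a Parseval argument in which ``the dual certificate forces the spectral energy of the residual $\rD x - \rD\hat x$ to concentrate on frequencies $\abs{k}\lesssim M$, on which $\abs{d_k}^{-1}\sim N/M$.'' This claim is unsupported and, as stated, false: the constraint only controls $(\rA z)_k$ for $k\in\Omega\subset\br{-2M,\ldots,2M}$, and neither the feasibility condition nor TV-minimality forces the error $z=\hat x - x$ to be band-limited, so the sum $\sum_{k\neq 0}\abs{d_k}^{-2}\abs{(\rA \rD z)_k}^2$ contains high-frequency terms where $\abs{d_k}^{-1}\asymp 1$ and cannot be bounded by $(N/M)^2\nmu{\rD z}_2^2$. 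The paper's mechanism is entirely different. The squared Fej\'{e}r certificate $Q$ of degree $2M$, evaluated on the integer grid, satisfies only $\abs{Q(k)}\leq c_0$ off the support with $c_0 = \max\br{1-0.92(M^2-1)/N^2,\,0.99993}$: because the kernel has curvature $\ord{M^2}$ near its peaks, at a grid point adjacent to a support point (distance $1/N$) the certificate has decayed from $1$ by only $\ord{M^2/N^2}$. This weak off-support gap enters the cone inequality $\nmu{\rP_\Delta^\perp \rD z}_1 \leq 2\nmu{\rP_\Delta^\perp \rD x}_1 + \abs{\Re\ip{\rP_\Delta \rD z}{\sgn(\rP_\Delta \rD x)}}$, where the term $c_0\nmu{\rP_\Delta^\perp \rD z}_1$ must be absorbed into the left-hand side, producing a factor $(1-c_0)^{-1}\lesssim N^2/M^2$ in \emph{both} the gradient bound and the signal bound (Proposition \ref{prop:dual} in the paper). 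Your claimed intermediate gradient bound $\nmu{\rD x-\rD\hat x}_2\lesssim \sqrt{s}\delta+\nmu{\rP_\Delta^\perp\rD x}_1$ without this factor is therefore not what the certificate delivers, and your subsequent derivation of the prefactor is not recoverable.

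The remaining structure of your proposal is broadly aligned with the paper: the commutation identity $\rA\rD = D\rA$ is indeed used (to convert the data-fidelity constraint into a constraint on $\rP_\Omega\rA\rD z$), the deterministic $m=4M+1$ case does follow from the full-band Fej\'{e}r construction of \cite{candes2014towards}, and the random case does invoke the subsampled certificate of \cite{tang2012compressive} together with matrix-Bernstein control of $\frac{1}{m}\rP_\Delta\rA^*\rV\rP_\Omega\rA\rP_\Delta$ and a union bound (the source of the $\log(N/\epsilon)$ term). But two further corrections are needed. First, the passage from gradient recovery to signal recovery is not a Fourier-domain Parseval computation but the discrete Poincar\'{e} inequality $\nmu{z}_2/\sqrt{N}\lesssim \delta + \nmu{\rD z}_1$, applied after bounding $\nmu{\rD z}_1\leq \sqrt{s}\nmu{\rP_\Delta\rD z}_2 + \nmu{\rP_\Delta^\perp\rD z}_1$; this is where the factors $s$ and $\sqrt{s}$ in (\ref{eq:error1}) arise. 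Second, unlike Theorem \ref{thm:unif_samp}, no weak-RIP condition is available here, which is precisely why the bound is worse by $\sqrt{s}$ than the uniform-sampling result; any proof must work with only the left-invertibility of $\frac{1}{m}\rP_\Delta\rU\rP_\Omega\rA\rP_\Delta$ and column-norm bounds, not an RIP-based tube argument.
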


\section{Explanation of the results}\label{sec:numerics}
In this section, we will explain the significance of the main results through the presentation of several numerical experiments.
The numerical algorithm used throughout this section is the split Bregman method described in \cite{goldstein2009split}\footnote{Matlab code can be downloaded from http://www.ece.rice.edu/\textasciitilde  tag7/Tom\_Goldstein/Split\_Bregman.html}. The relative error of a reconstruction $R$ from samples of some underlying signal $I$ is defined as $\norm{R - I}_2/\norm{I}_2$.
We use the standard definition of signal to noise ratio (SNR) of a perturbed signal $\hat x = x + e$ and say that the SNR of $e$ relative to $x$ is $\mathrm{SNR} = 10\log_{10}\left(\nm{x}_2/\nm{e}_2\right)$.

\subsection{Uniform random sampling is robust and stable}
This section presents some numerical examples to demonstrate the stability and robustness of uniform random sampling. We first consider the \textit{robustness} of the reconstructions of the two gradient sparse signals in Figure \ref{fig:signals} (each of length 512). 
The graphs in Figure \ref{fig:robust} shows the relative errors of solutions of (\ref{eq:noise_tv}), given noisy measurements of the form $y = \rP_\Omega \rA x + e$ for different SNR values of $e$ relative to $ \rP_\Omega \rA x$, where $e$ is the noise vector drawn at random in accordance with a uniform random distribution,  $x$ is a sparse signal from Figure \ref{fig:signals} to be recovered, and $\Omega$ is drawn uniformly at random such that it includes zero and indexes 10\% of the 512 possible samples. 

To illustrate the \textit{stability} of solving (\ref{eq:noise_tv}), we consider the reconstruction of approximately gradient sparse signals of the form $x+e$, where $x$ is one of the gradient sparse signals of Figure \ref{fig:signals} and $e$ is a random perturbation. We will consider the reconstructions of signals of this form for different SNR values of $e$ relative to $x$ such that $\mathrm{SNR} = 10\log_{10}\left(\nm{x}_2/\nm{e}_2\right)$.
The graphs in Figure \ref{fig:inexact} shows the relative errors (the clean signal is now considered to be $x+e$ which is not perfectly sparse) against the different SNR values when reconstructing  from samples of the form $y = \rP_\Omega \rA(x+e)$, where $\Omega$ is drawn uniformly at random such that it includes zero and indexes 10\% of the 512 possible samples. 

 Figure \ref{fig:peppers_noise} illustrates the use of uniform random sampling in the two dimensional case, where we reconstruct a 512 by 512 test image from 35\% of its noise corrupted Fourier coefficients, chosen uniformly at random.
%

\begin{figure}[htb]
\begin{center}
\begin{tabular}{c@{\hspace{-12pt}}c@{\hspace{-12pt}}c}
$x_1$ & $x_2$ & Zoom of $x_2$\\
\includegraphics[width = 0.34\textwidth, trim=1.2cm 0.8cm .2cm 0.4cm, clip=true]{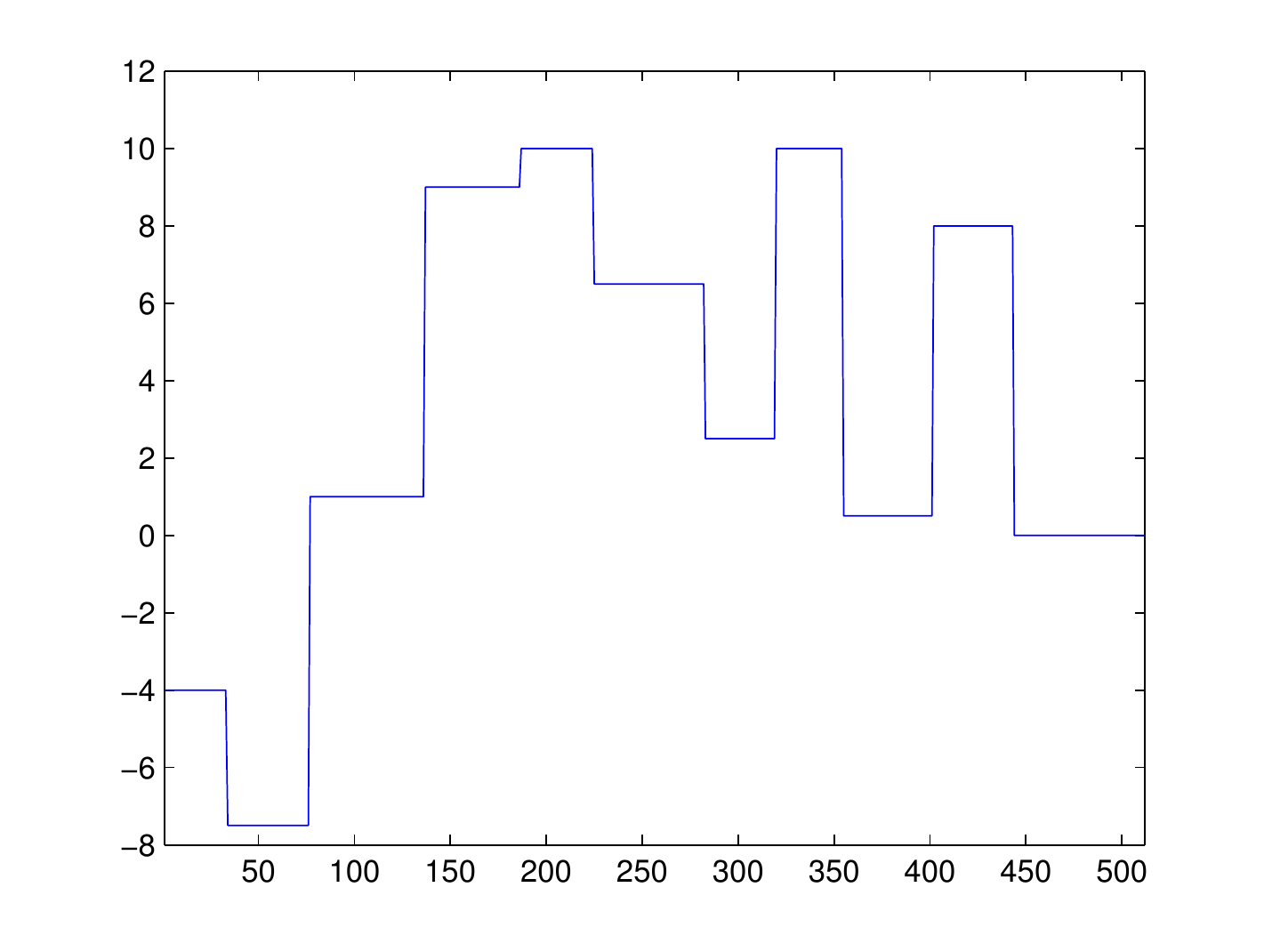}&
\includegraphics[width = 0.34\textwidth, trim=1.2cm 0.8cm .2cm 0.4cm, clip=true]{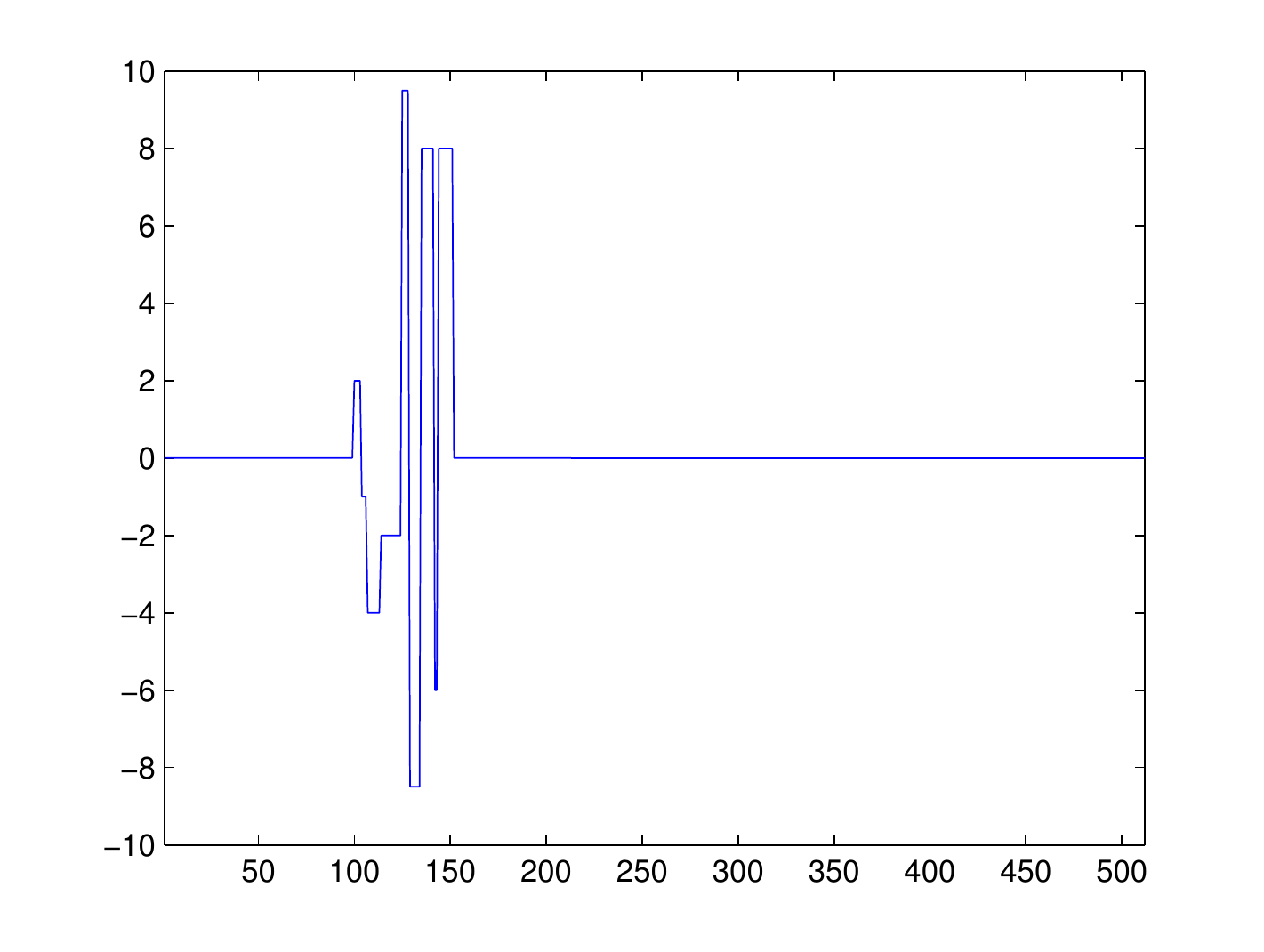}&
\includegraphics[width = 0.34\textwidth, trim=1.2cm 0.8cm .2cm 0.4cm, clip=true]{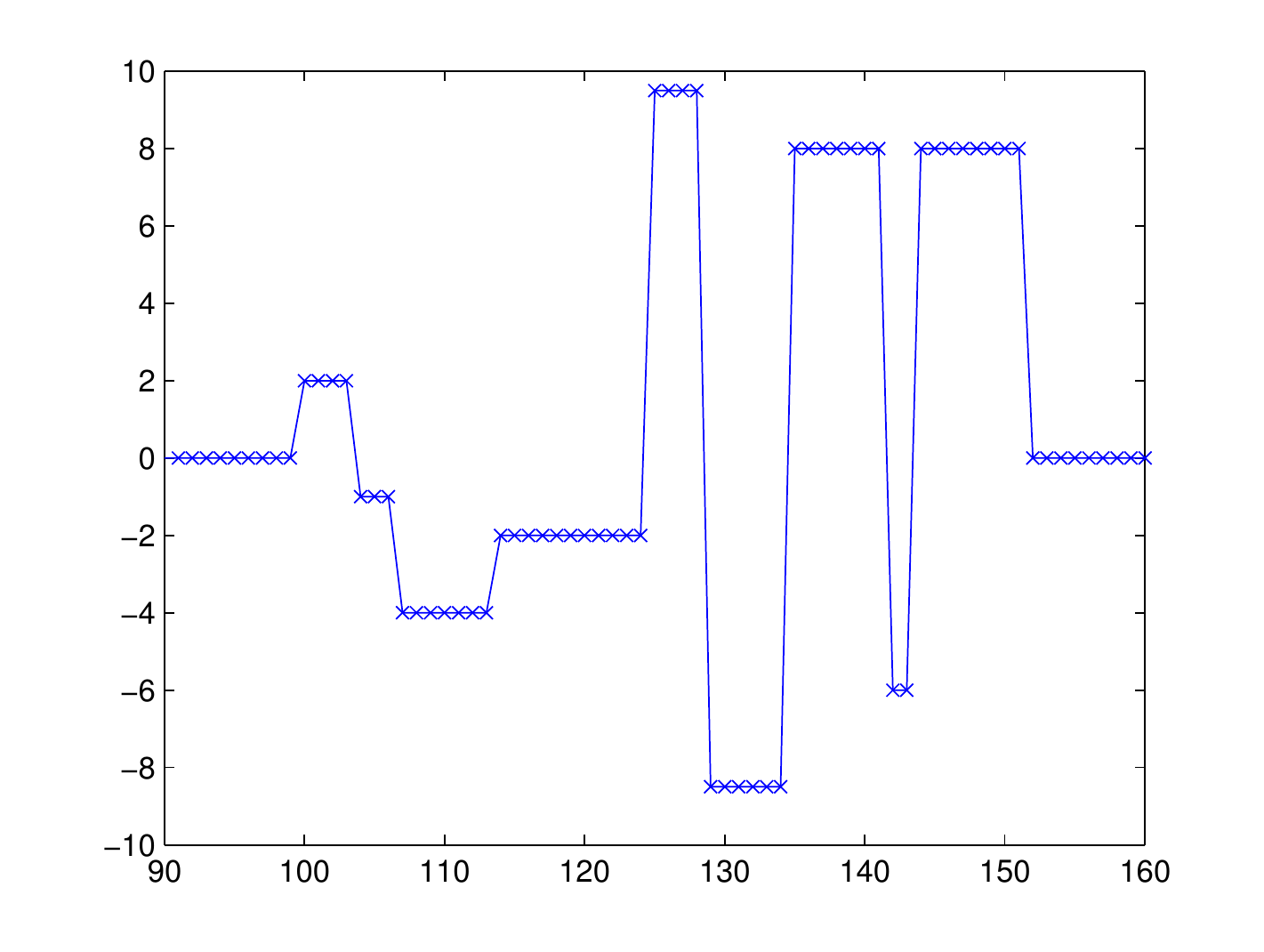}
\end{tabular}
\end{center}
\caption{Two signals consisting of 512 values (left and centre). For the zoom of $x_2$ (right), the value of the signal at each index is marked with a cross. \label{fig:signals}}
\end{figure}

 \begin{figure} 
\begin{center}
\begin{tabular}{c@{\hspace{0pt}}c}
\includegraphics[width = 0.45 \textwidth]{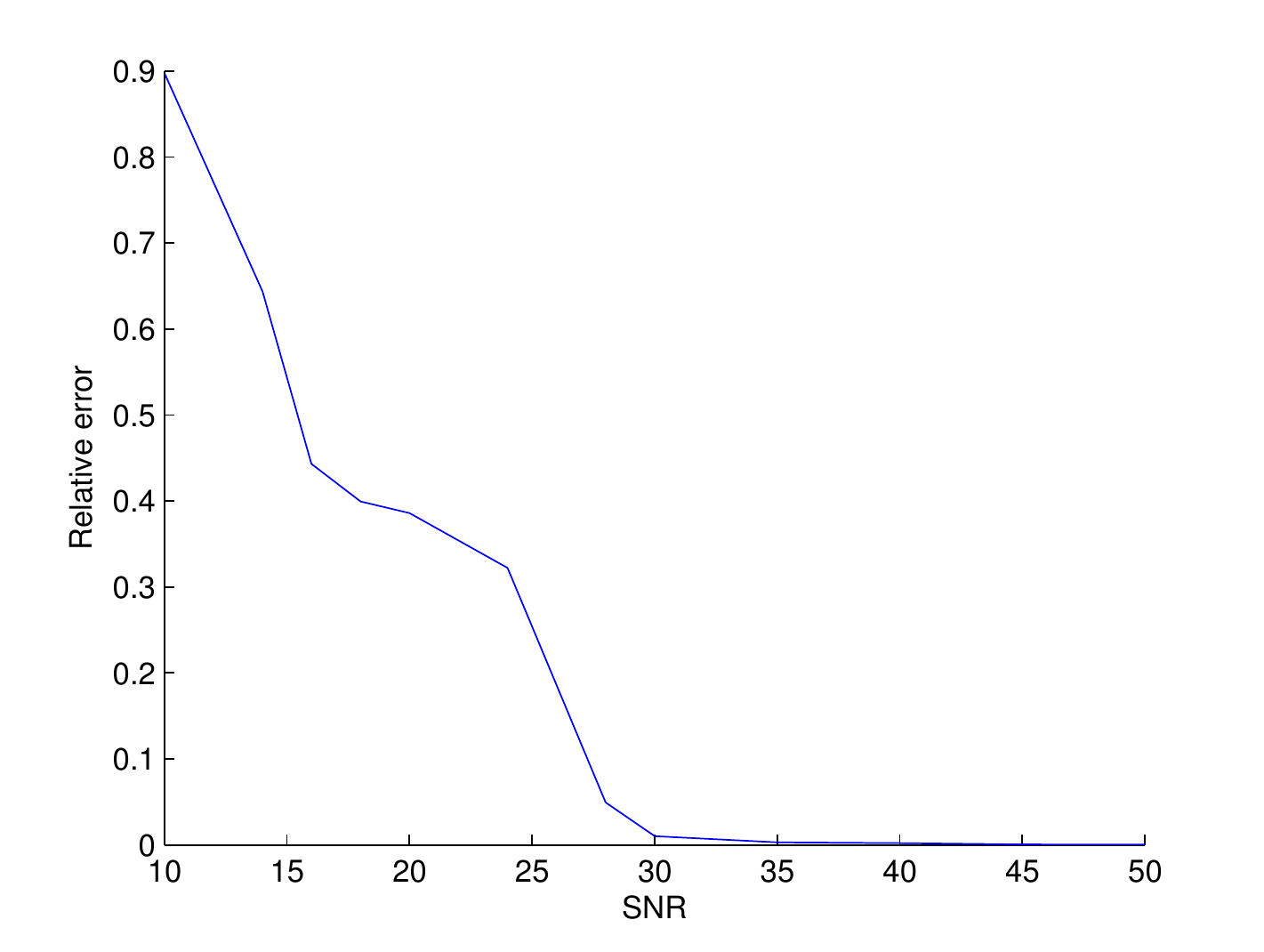}&
\includegraphics[width = 0.45\textwidth]{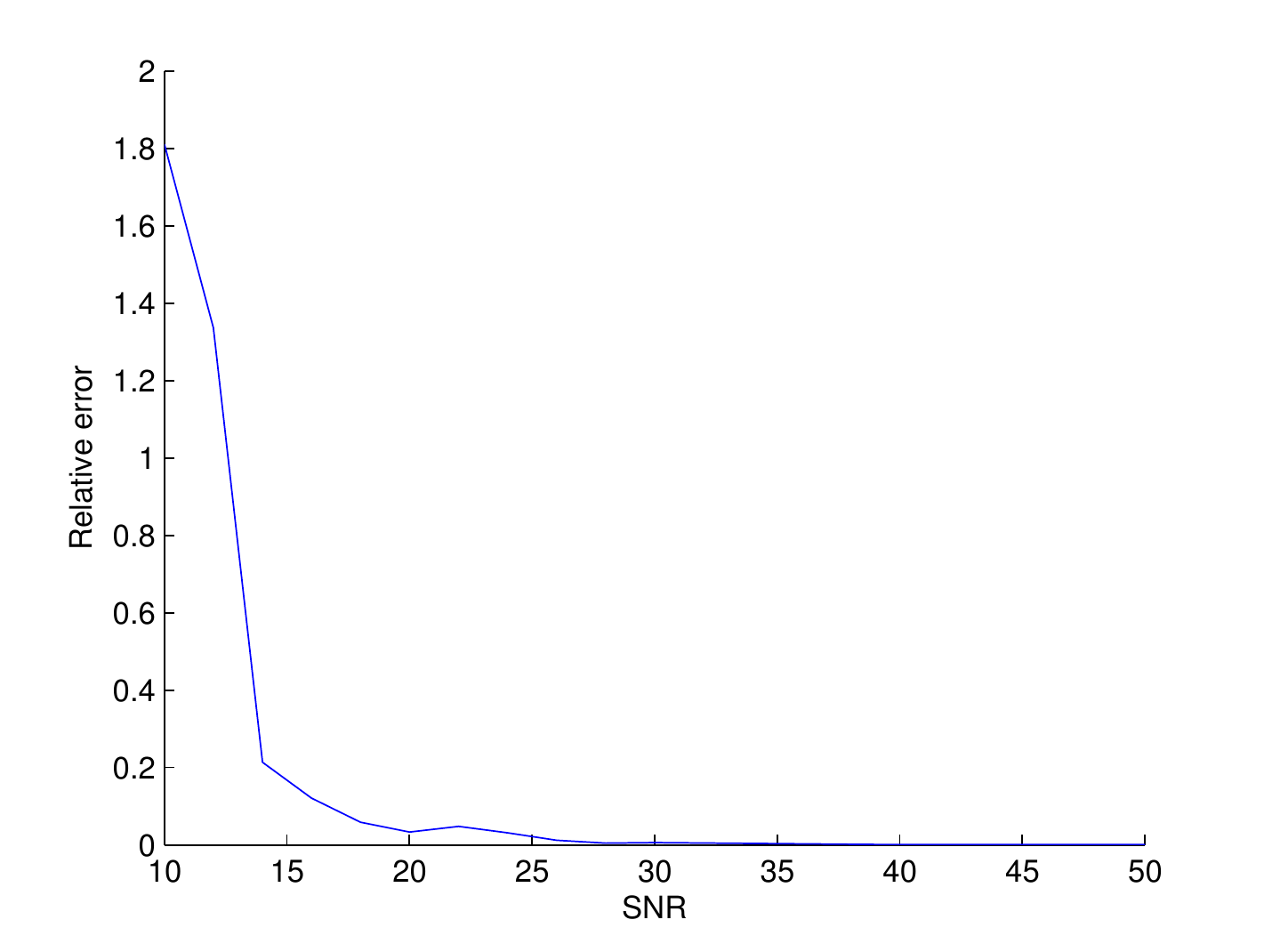}
\end{tabular}
\end{center}
\caption{Relative errors for reconstructions of $x_1$ (left) and $x_2$ (right) from noisy samples. The original signals $x_1$ and $x_2$ are shown in Figure \ref{fig:signals}.\label{fig:robust}}
\end{figure}

\begin{figure} 
\begin{center}
\begin{tabular}{c@{\hspace{0pt}}c}
\includegraphics[width = 0.45\textwidth]{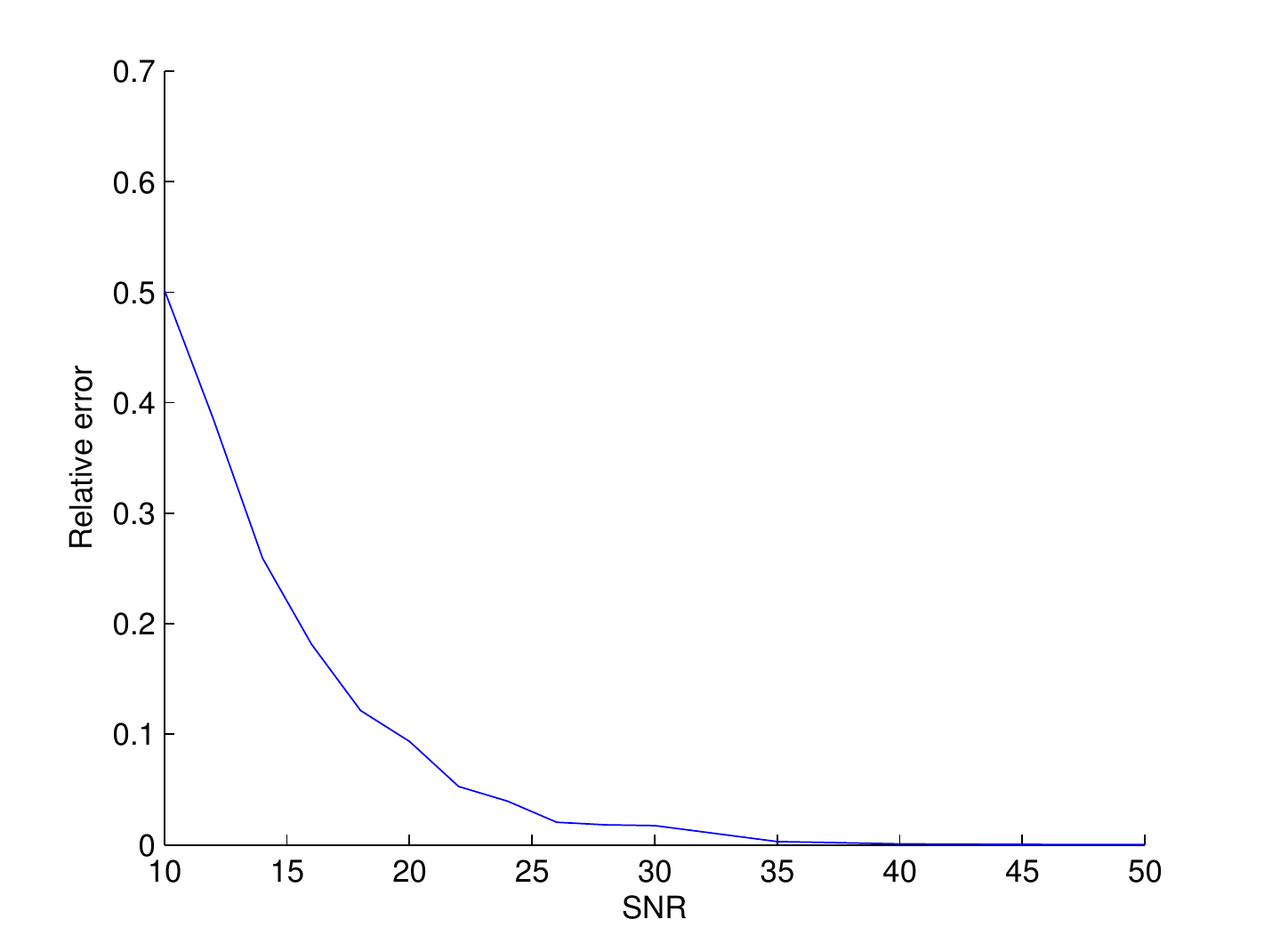}&
\includegraphics[width = 0.45 \textwidth]{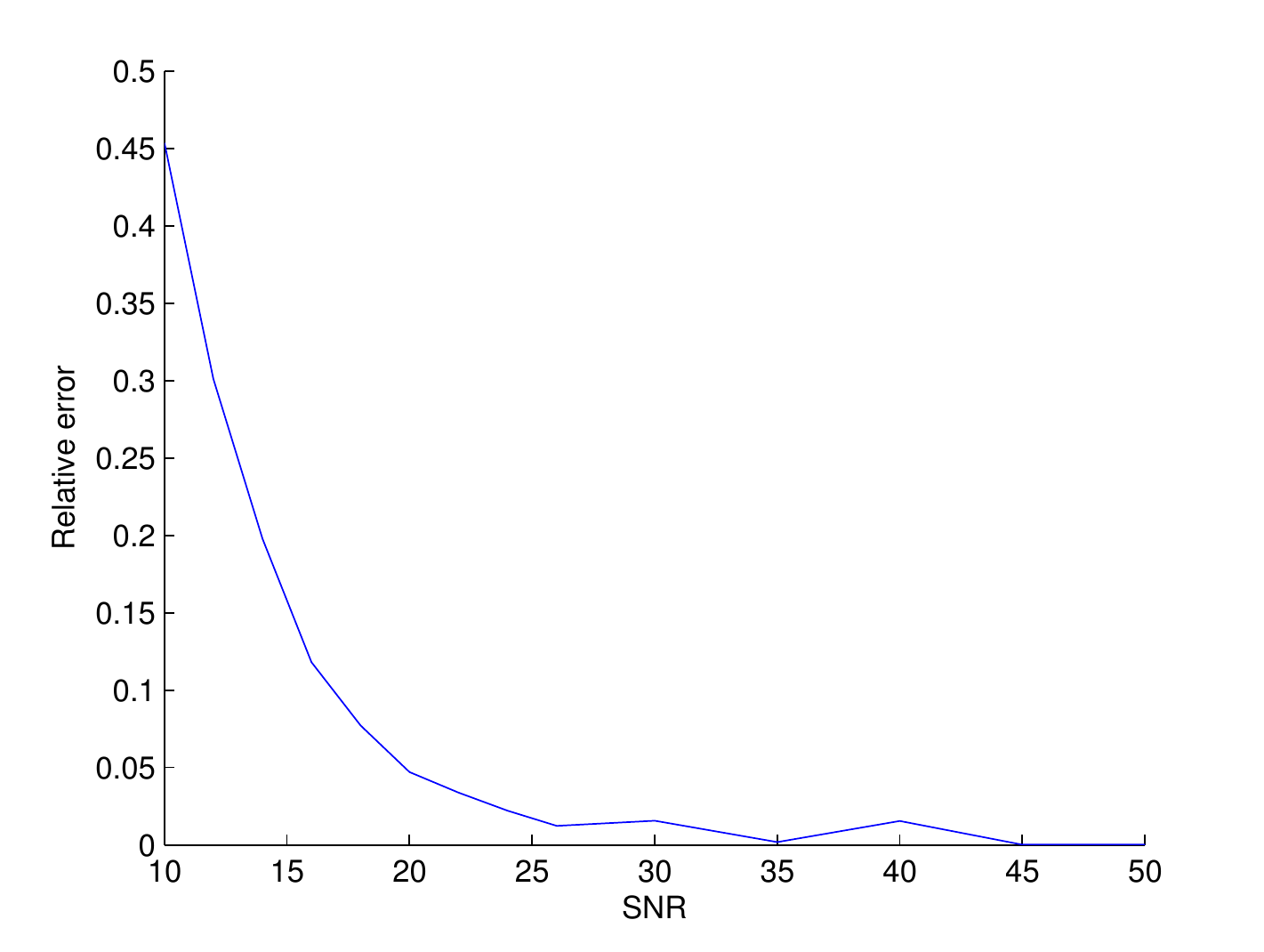}
\end{tabular}
\end{center}
\caption{Relative errors for reconstructions of perturbations of $x_1$ (left) and $x_2$ (right). The original signals $x_1$ and $x_2$ are shown in Figure \ref{fig:signals}.\label{fig:inexact}}
\end{figure}

\begin{figure} 
\begin{tabular}{c@{\hspace{2pt}}c@{\hspace{2pt}}c@{\hspace{2pt}}c}
Original & $SNR = \infty,\, \epsilon_{rel} = 0.20$ & $SNR = 10,\, \epsilon_{rel} = 0.22$ &$SNR = 5, \, \epsilon_{rel} = 0.46$ \\
\includegraphics[width = 0.24 \textwidth]{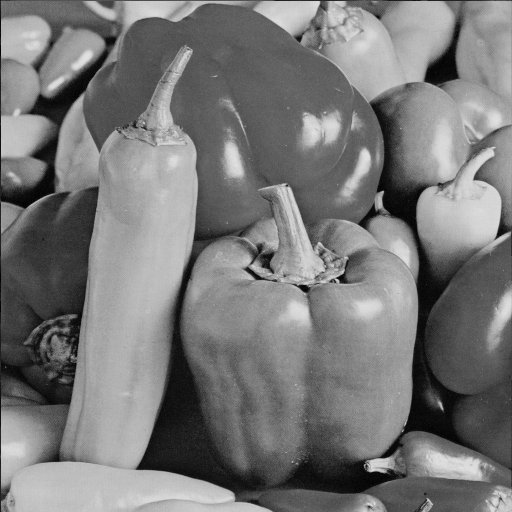}
&\includegraphics[width = 0.24 \textwidth]{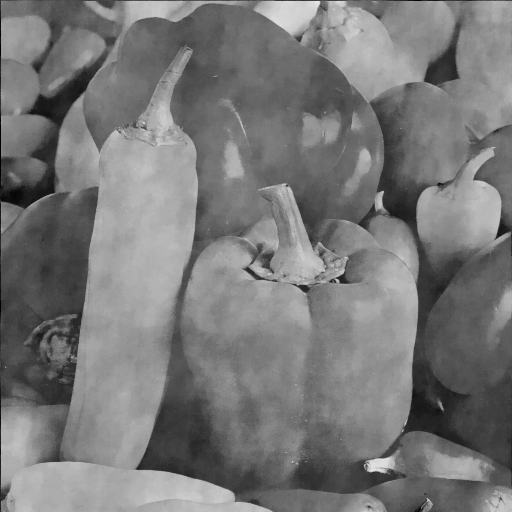}&
\includegraphics[width = 0.24 \textwidth]{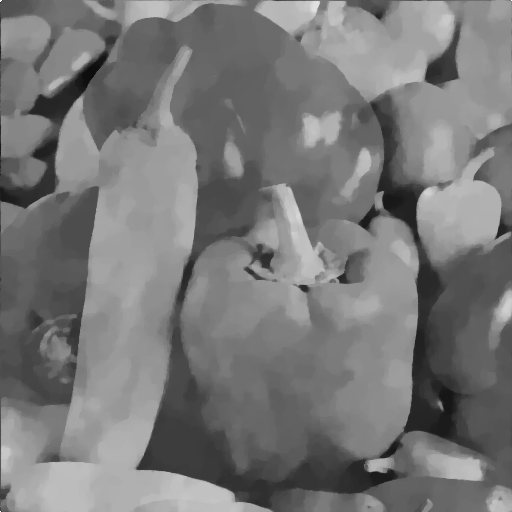}&
\includegraphics[width = 0.24 \textwidth]{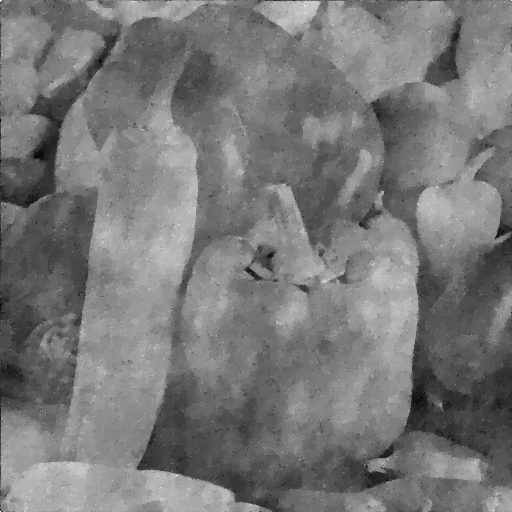}
\end{tabular}
\caption{Reconstruction of a $512\times 512$ test image from 35\% of its noise corrupted Fourier coefficients, chosen uniformly at random. \label{fig:peppers_noise}}
\end{figure}
\subsection{Stability}\label{sec:num_stab}
The theoretical results of Section \ref{sec:main_1} demonstrated that uniform random sampling maps are stable to inexact sparsity. To recap, for the recovery of elements of $\bbC^N$ up to sparsity level $s$, the following two sampling strategies were considered in Theorem \ref{thm:unif_samp} and Theorem \ref{thm:near_optimal}  respectively.
\begin{itemize}
\item[(S1)] Draw $\ord{s\log N}$ samples uniformly at random.
\item[(S2)] Draw $\ord{s\log N}$ samples  uniformly at random. Then, add another $\ord{s\log N}$ i.i.d. samples such that 
$$
\bbP(X_j = n) = p(n), \quad p(n) = C\left( \log(N)\max\br{1, \abs{n}}\right)^{-1}, \quad n=-N/2+1,\ldots, N/2,
$$
where $X_j$ indexes the $j^{th}$ samples drawn in this second phase of sampling and $C$ is such that $p$ is a probability measure.
\end{itemize}
As discussed, the provable error bounds obtained with sampling strategy (S1) are suboptimal, whereas, by adding the samples which are chosen in accordance with the nonuniform distribution in (S2), one can guarantee near-optimal error bounds. A natural question to ask is whether sampling  in accordance with a nonuniform distribution actually improves stability, or whether the improved stability between the theorems of Section \ref{sec:near_optimal} and Section \ref{sec:main_1} is simply an artefact of the proofs in this paper. To empirically address this question,  consider  the following experiment.

Given $N\in\bbN$, a gradient sparse vector $x\in\bbR^N$ and an inexact sparsity level $S$, let us perturb $x$  by a randomly generated vector $h\in\bbR^N$, where $h$ is such that  $S = 10\log_{10}\left(\nm{x}_2/\nm{h}_2\right)$. Note that this is the SNR of $h$ relative to $x$, and smaller values of $S$ represent larger magnitudes of perturbations.  We now consider the reconstruction of the approximately sparse signal $x+h$ from $\rP_\Omega \rA (x+h)$. The sampling set $\Omega$ will be such that its cardinality is $\lceil 0.15 N\rceil$, and it is either chosen uniformly at random (as described in Theorem \ref{thm:unif_samp}), which we will denote by $\Omega_U$;  or as a union of a uniform random sampling set and a variable density sampling set (as described in Theorem \ref{thm:near_optimal}), which we will denote by $\Omega_P$. 

We performed this experiment for perturbations of two  sparse signals, shown in Figure \ref{fig:stab_signals}, and the relative errors of reconstructing  the approximately sparse versions of theses signals via solving  (\ref{eq:noise_tv}) with $\delta = 0$ (since we are investigating stability rather than robustness) are shown in Figure \ref{fig:graphs_stab}. Observe that  both samplings with $\Omega_P$ and $\Omega_U$ exhibit stability with respect to inexact sparsity, since the relative errors all decay as the SNR values increase. However, the relative errors obtained when sampling with $\Omega_P$ are much lower, suggesting that one of the benefits offered by dense sampling around the zero frequency is increased stability. Finally, it is perhaps interesting to note that the results of both Theorem \ref{thm:near_optimal} and Theorem \ref{thm:unif_samp} guarantee optimal error bounds (up to $\log$ factors) on the \textit{recovered gradient}, and Figure \ref{fig:stab_gradient} confirms this result by showing that there is no substantial difference between the error on the recovered gradient between $\Omega_U$ and $\Omega_P$. This is illustrated in Figure \ref{fig:grad_vs_signal} which shows the recovered signals and recovered gradients of this experiment when $x_1$ in Figure \ref{fig:stab_signals} has been perturbed by $h$ with an SNR of 17. Note that while reconstruction obtained via (S2) is clearly superior to the reconstruction obtained via (S1), the difference in the quality of the recovered gradients is far less substantial.  So, experimentally, it appears as though dense sampling at low frequencies will significantly improve the stability of the recovered signal, although not the stability of the recovered gradient.

This improvement in stability is particularly visible in two dimensions - consider the recovery of the $256\times 256$ test image shown in Figure \ref{fig:peppers_reconsr}. This figure shows the reconstruction from when the sampling set is the two dimensional analogue of either (S1) or (S2) (as described in Theorem \ref{cor:unif_samp_2D_opt}). The reconstructions are obtained by solving (\ref{eq:tvcs_2d}) with $\delta = 0$, so we consider only the sparsity stability rather than  noise robustness. The 
improvement in reconstruction quality is substantial, and, as suggested by this section and our theoretical result, one possible reason for this is that additional samples at low frequencies are required for optimal stability.

\begin{figure} 
\begin{center}
\begin{tabular}{@{\hspace{-12pt}}c@{\hspace{-12pt}}c@{\hspace{-12pt}}c}
\includegraphics[width = 0.33\textwidth, trim=1.2cm 1.2cm 1.2cm 0cm, clip=true]{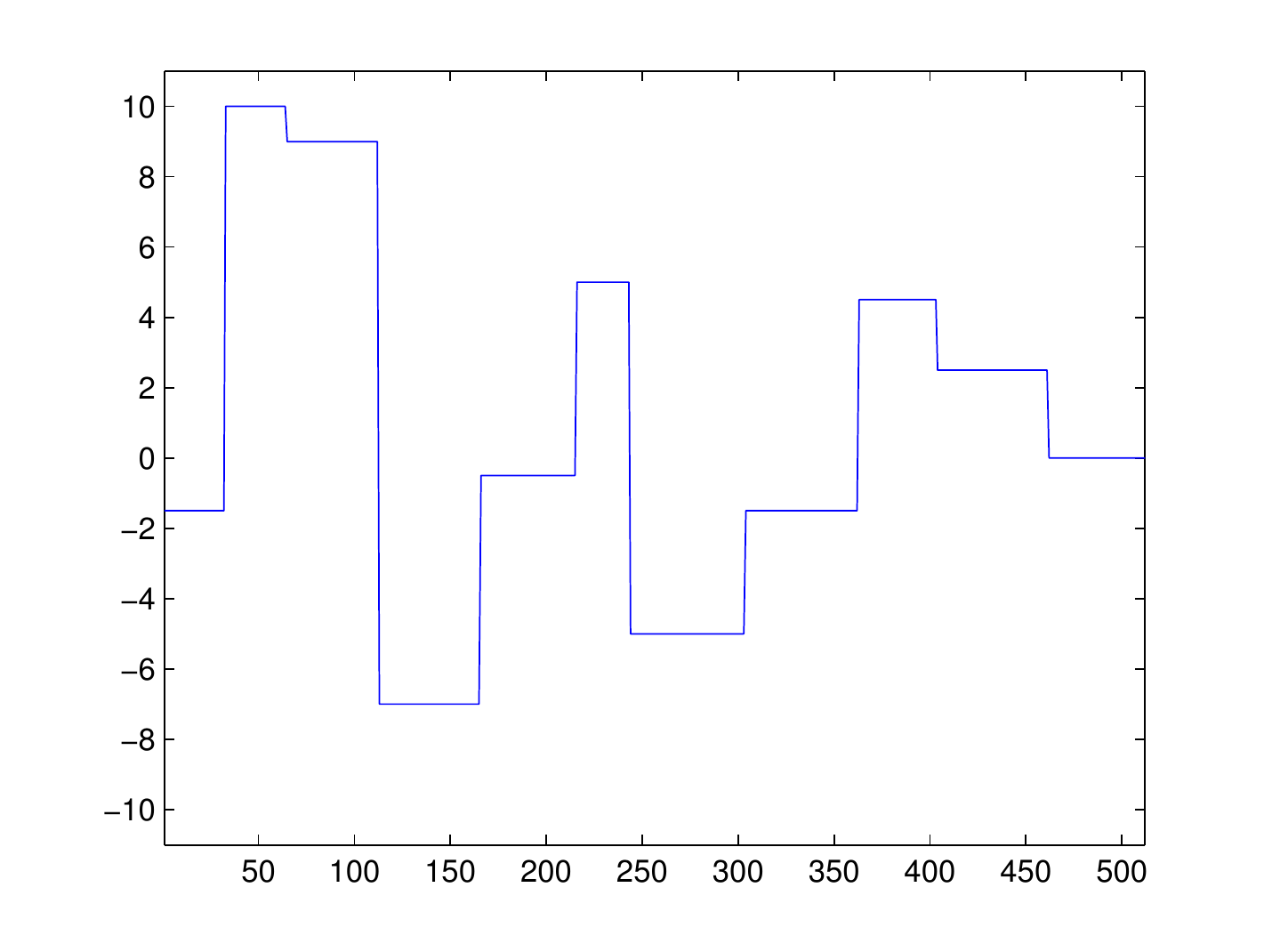}
\includegraphics[width = 0.33\textwidth, trim=1.2cm 1.2cm 1.2cm 0cm, clip=true]{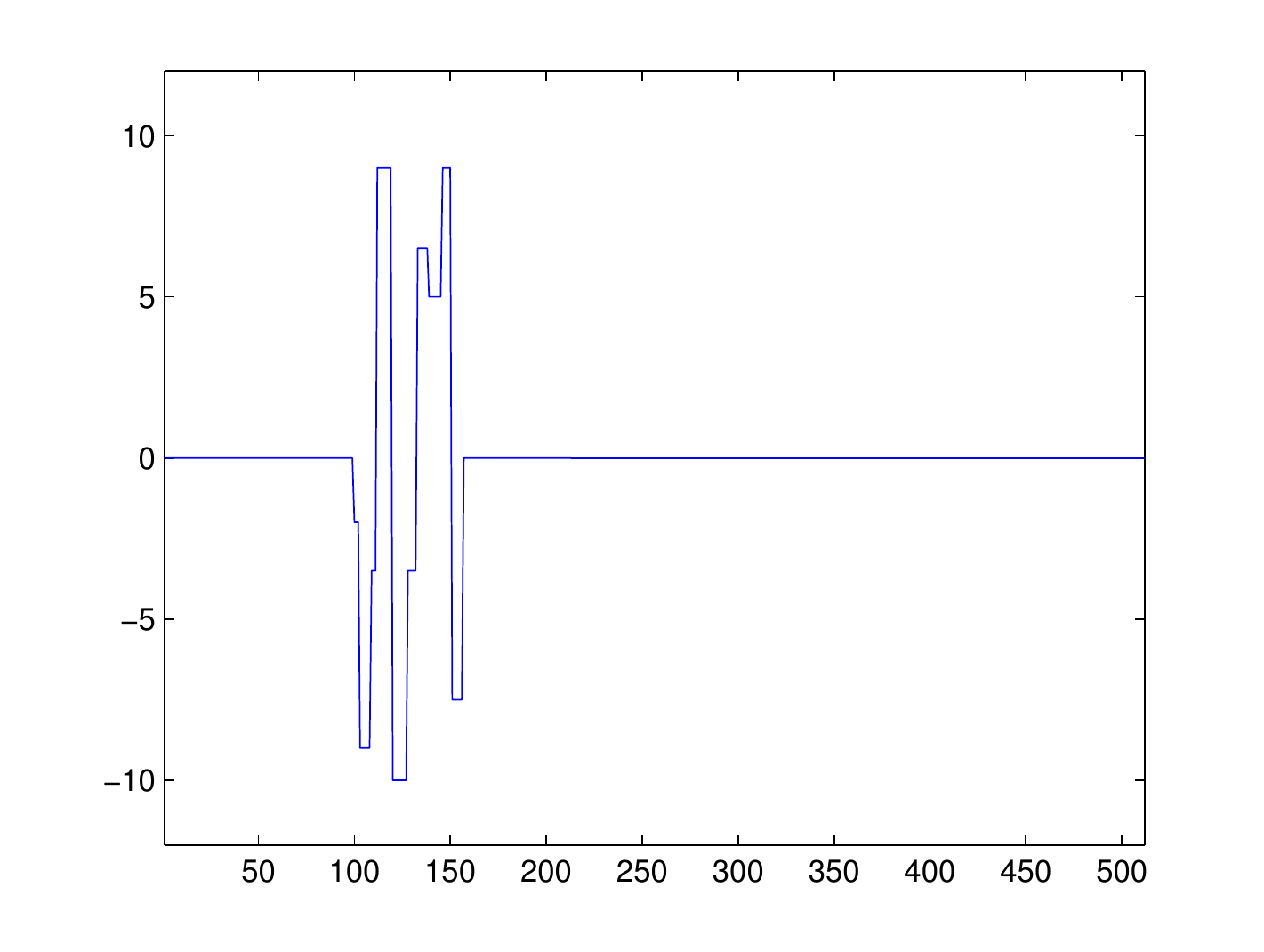}
\includegraphics[width = 0.33\textwidth, trim=1.2cm 1.2cm 1.2cm 0cm, clip=true]{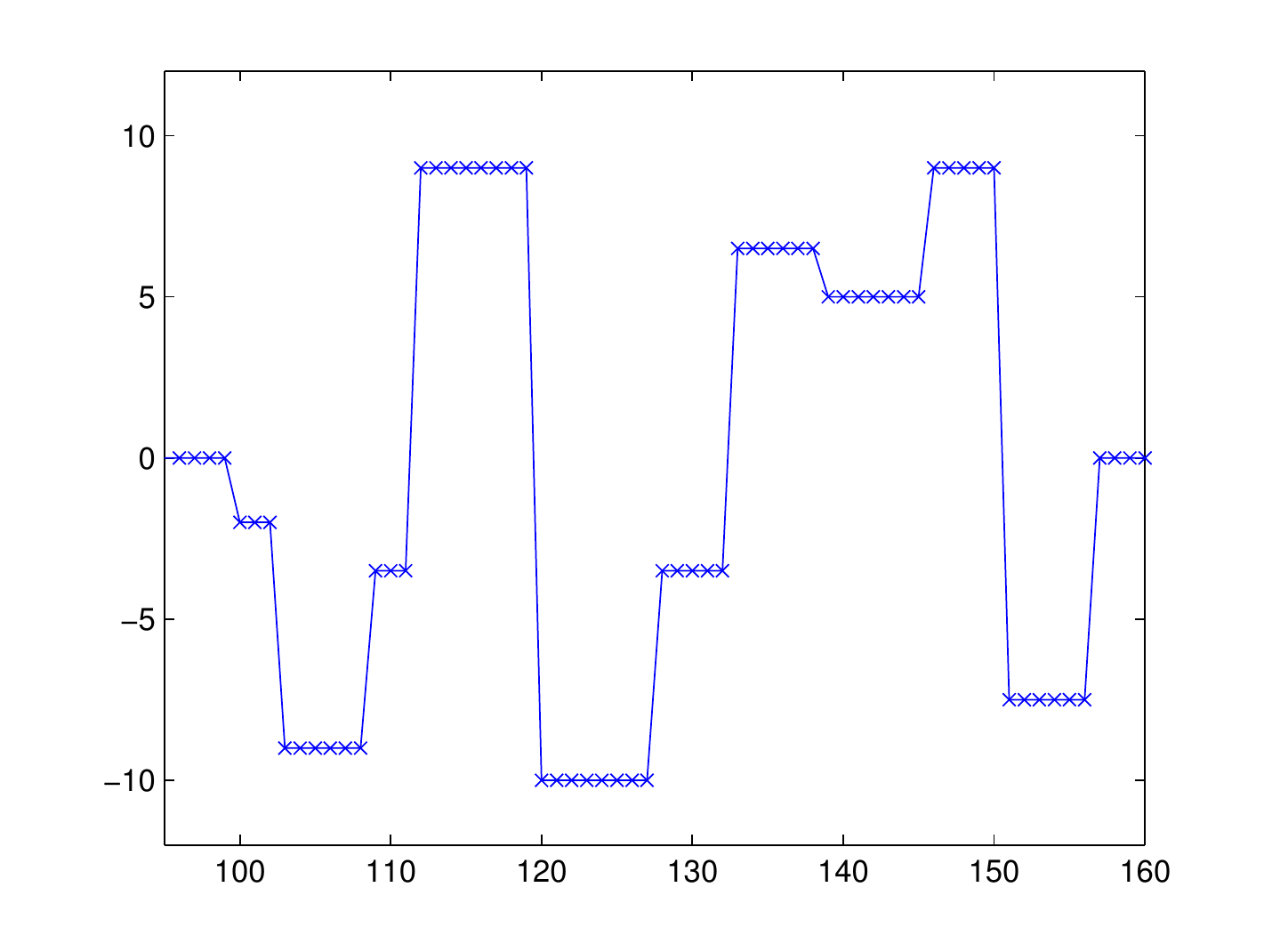}
\end{tabular}
\end{center}
\caption{The coarse signal, $x_1$, which will be perturbed (left). The fine signal $x_2$ which will be perturbed (centre), and a zoom  of $x_2$ (right) on indices between  90 and 160. For clarity, the values of $x_2$ on each index is marked by a cross. \label{fig:stab_signals}}
\end{figure}

\begin{figure} 
\begin{center}
\begin{tabular}{@{\hspace{-12pt}}c@{\hspace{-12pt}}c@{\hspace{-12pt}}c}
\includegraphics[width = 0.4\textwidth]{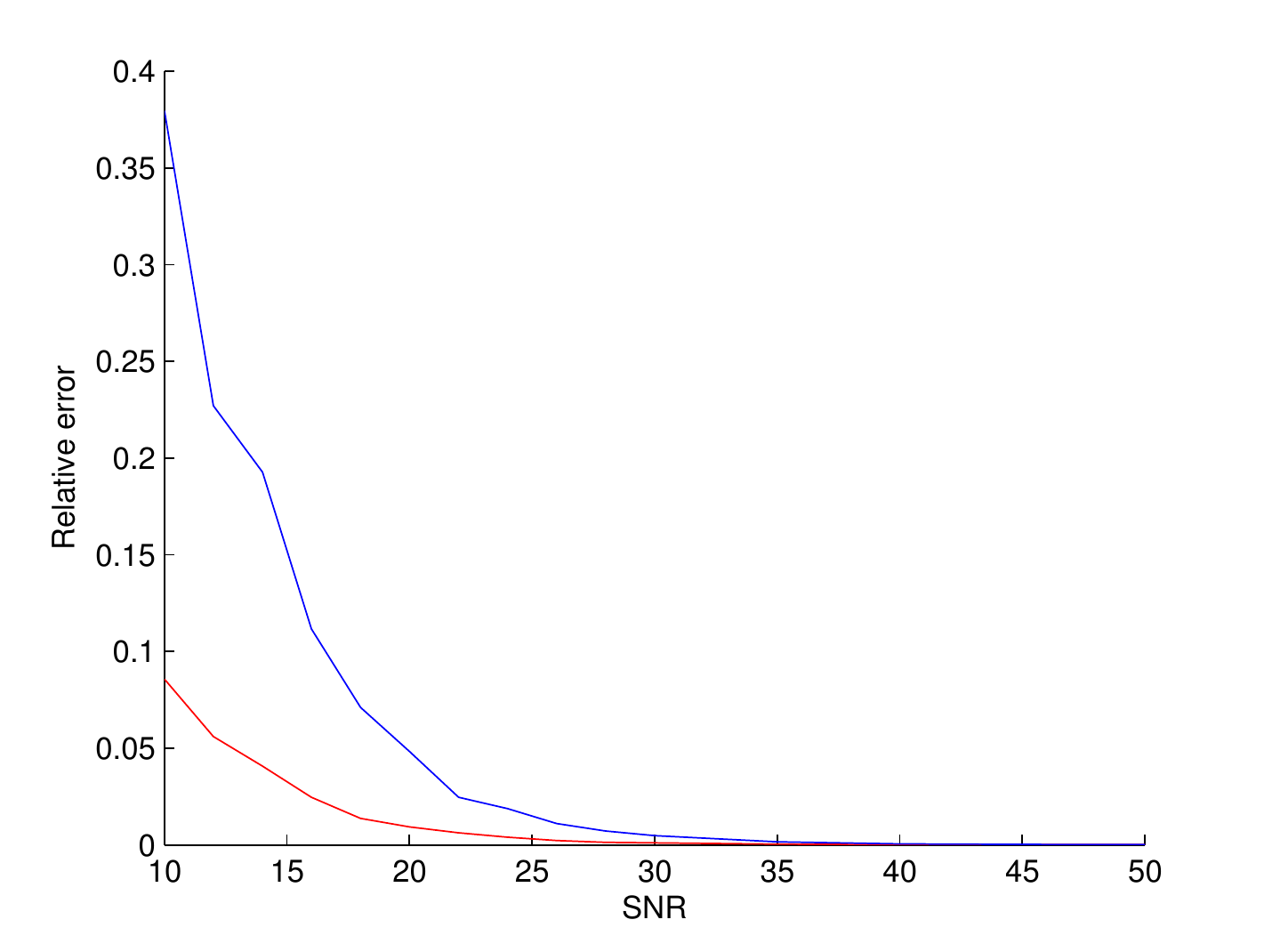}
\includegraphics[width = 0.4\textwidth]{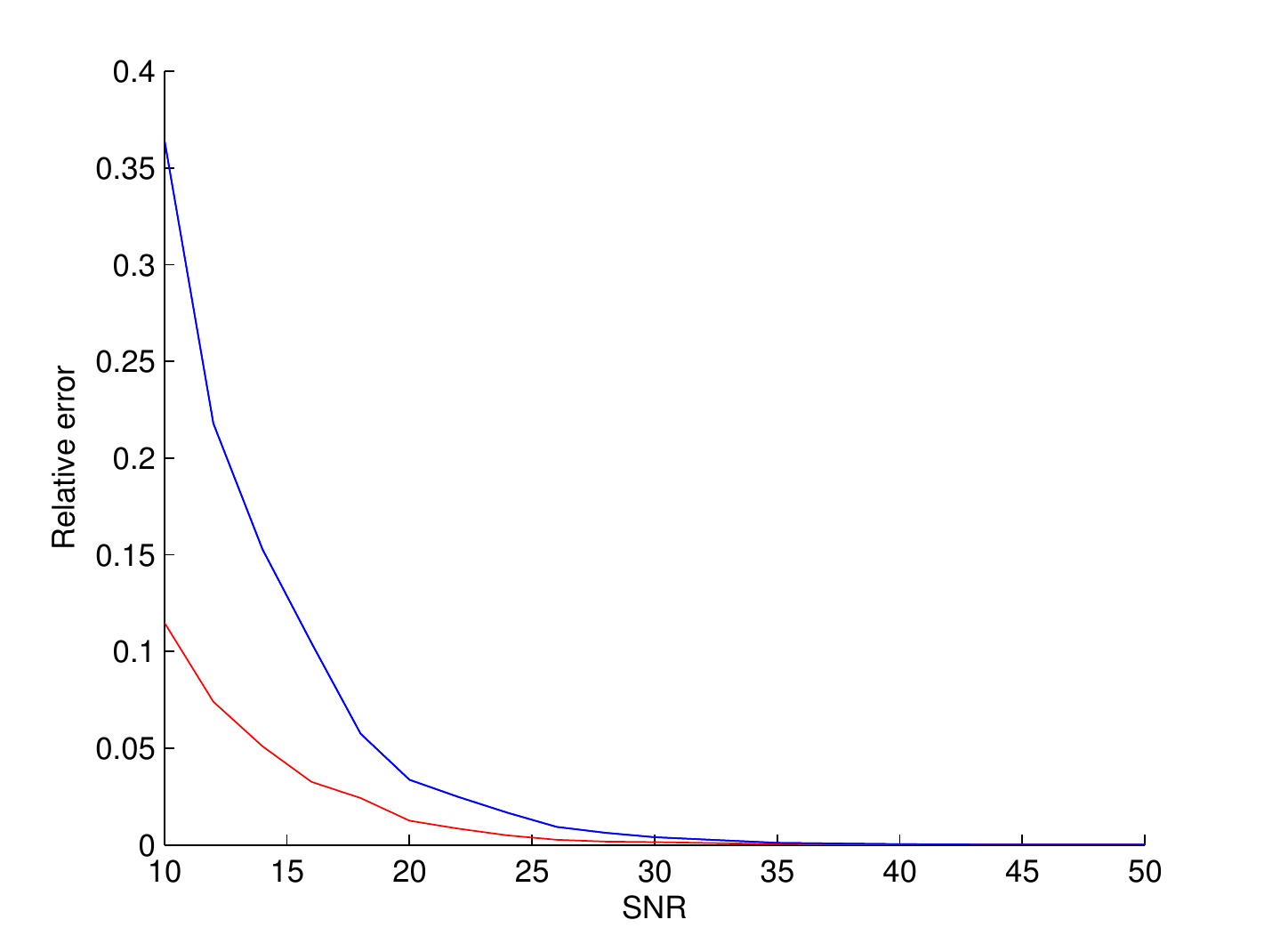}
\end{tabular}
\end{center}
\caption{Left:  plot of relative error $\nm{\hat x - (x+h)}_2/\nm{x+h}_2$ against $\mathrm{SNR} = 10\log_{10}\left(\nm{x}_2/\nm{h}_2\right)$, where $x:=x_1$ is shown in Figure \ref{fig:stab_signals} and $\hat x$ is the reconstruction.  The blue line refers to choosing $\Omega:=\Omega_U$. The red line refers to choosing $\Omega :=\Omega_P$. Right: the equivalent plot for $x:=x_2$. \label{fig:graphs_stab}}
\end{figure}

\begin{figure} 
\begin{center}
\includegraphics[width = 0.4\textwidth]{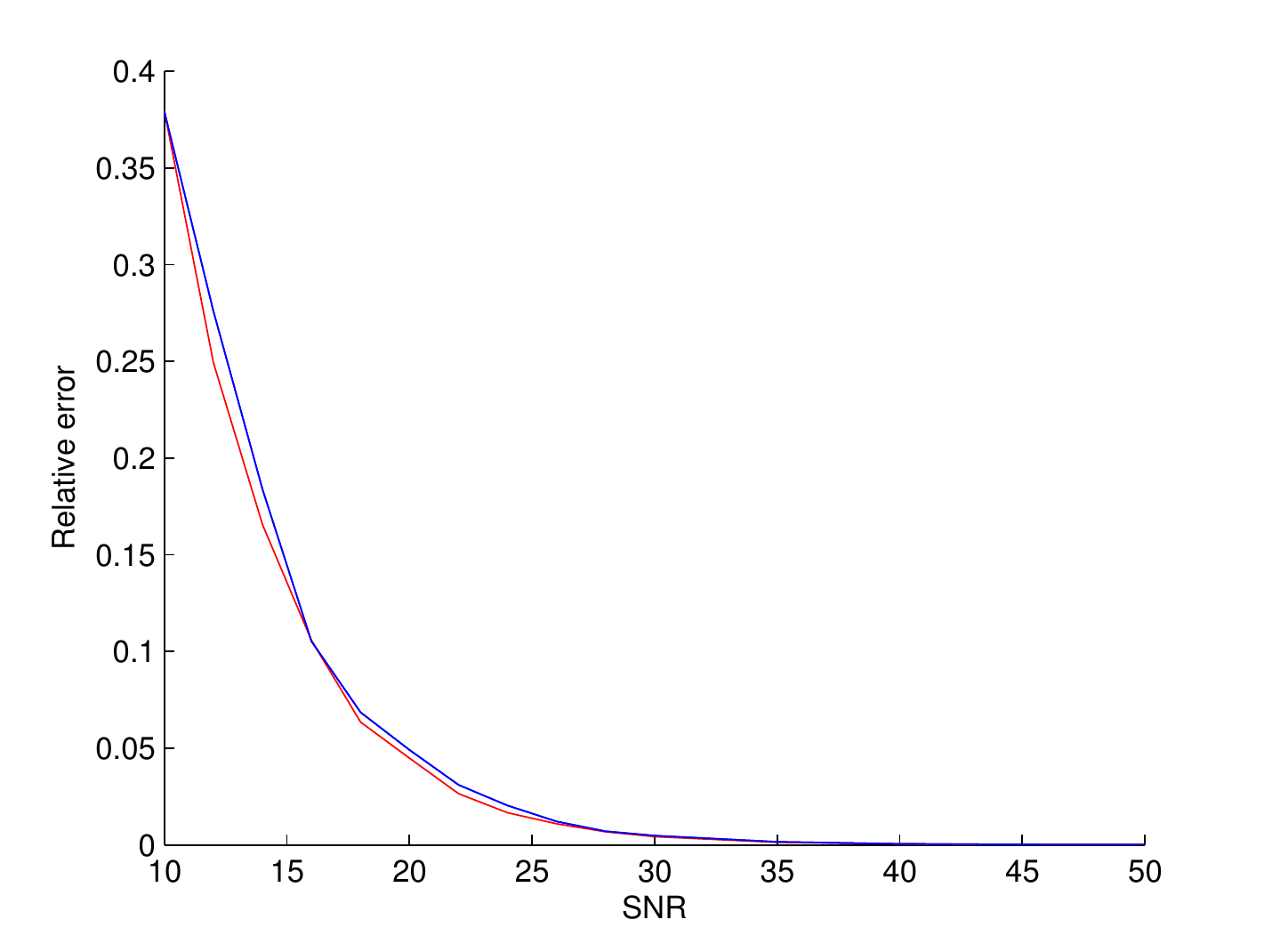}
\includegraphics[width = 0.4\textwidth]{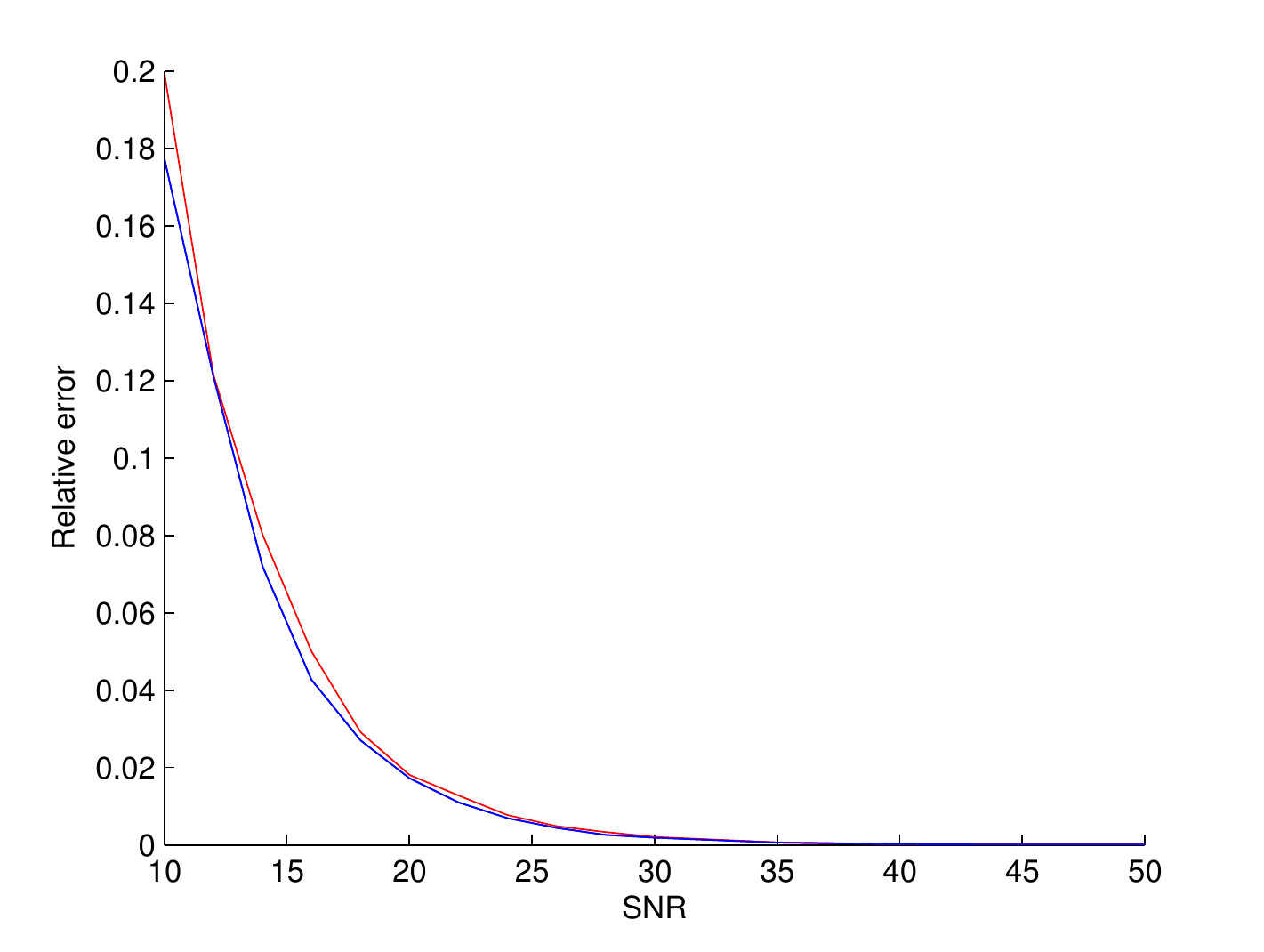}
\end{center}
\caption{Left: y-axis shows the relative error of the recovered gradient: $\nm{\rD(x+h-\hat x)}_2/\nm{\rD (x+h)}_2$, where $\hat x$ is the recovered signal, $x :=x_1$ is the sparse signal shown in Figure \ref{fig:stab_signals}, and $h$ is the perturbation. The x-axis shows the SNR of $h$ relative to $x$. The blue line corresponds to sampling uniformly at random, and the red line corresponds to uniform plus variable sampling of Theorem \ref{thm:near_optimal}. Right: same as the left graph, except that $x:=x_2$, where $x_2$ is as shown in Figure \ref{fig:stab_signals}. \label{fig:stab_gradient}}

\end{figure}

\begin{figure} 
\begin{center}
\begin{tabular}{@{\hspace{-12pt}}c@{\hspace{-12pt}}c@{\hspace{-12pt}}}
\includegraphics[width = 0.45\textwidth]{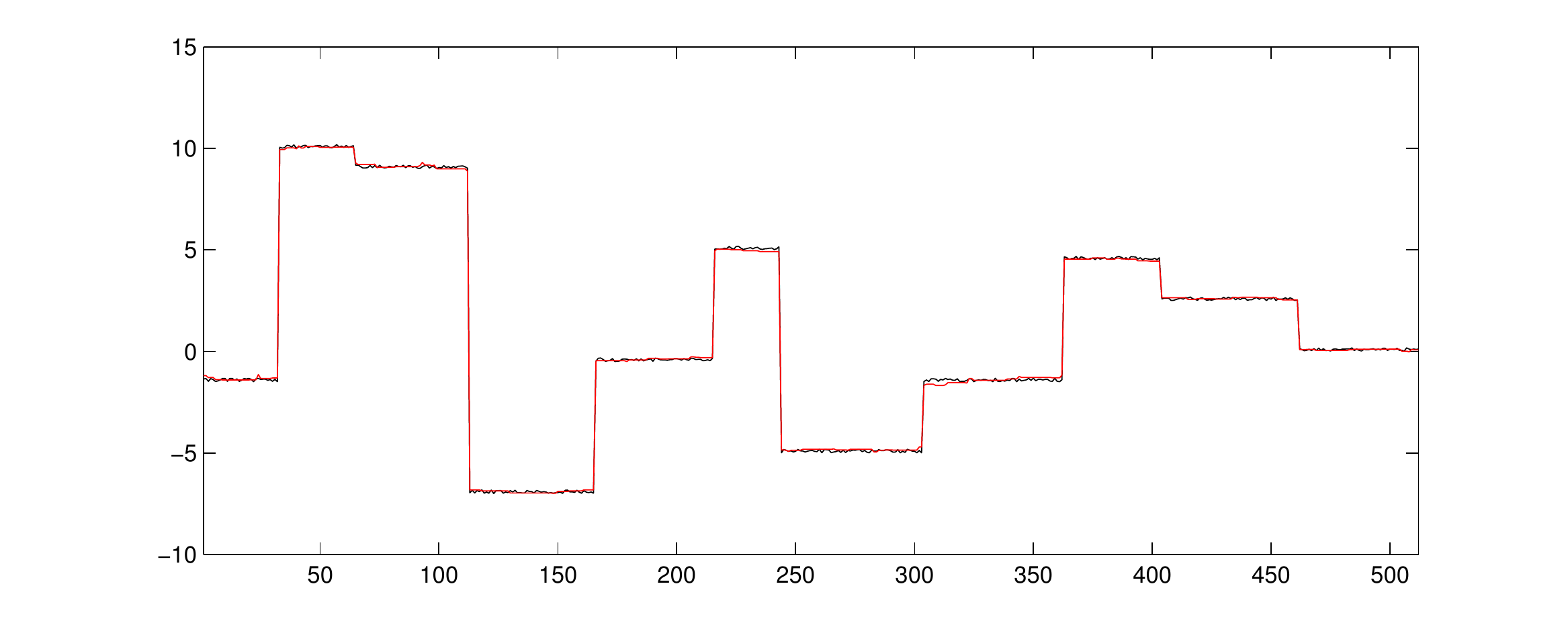}&
\includegraphics[width = 0.45\textwidth]{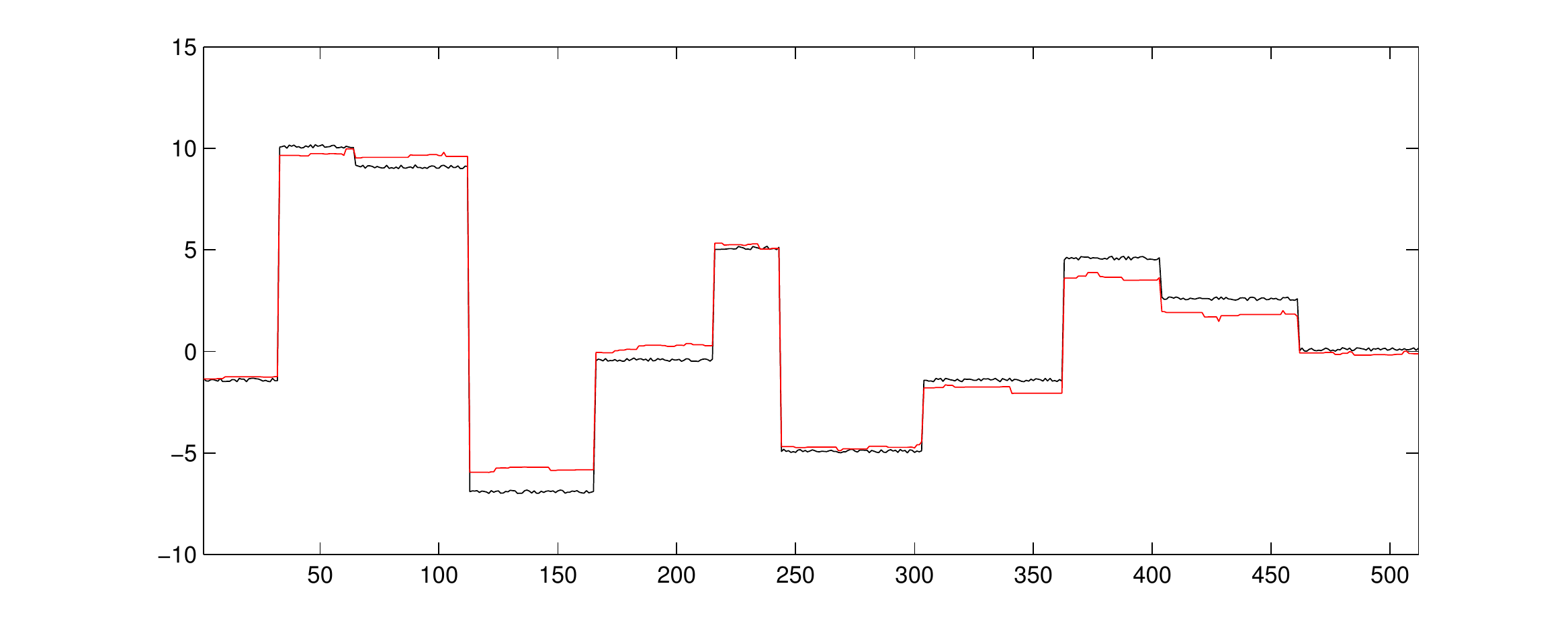}\\
\includegraphics[width = 0.45\textwidth]{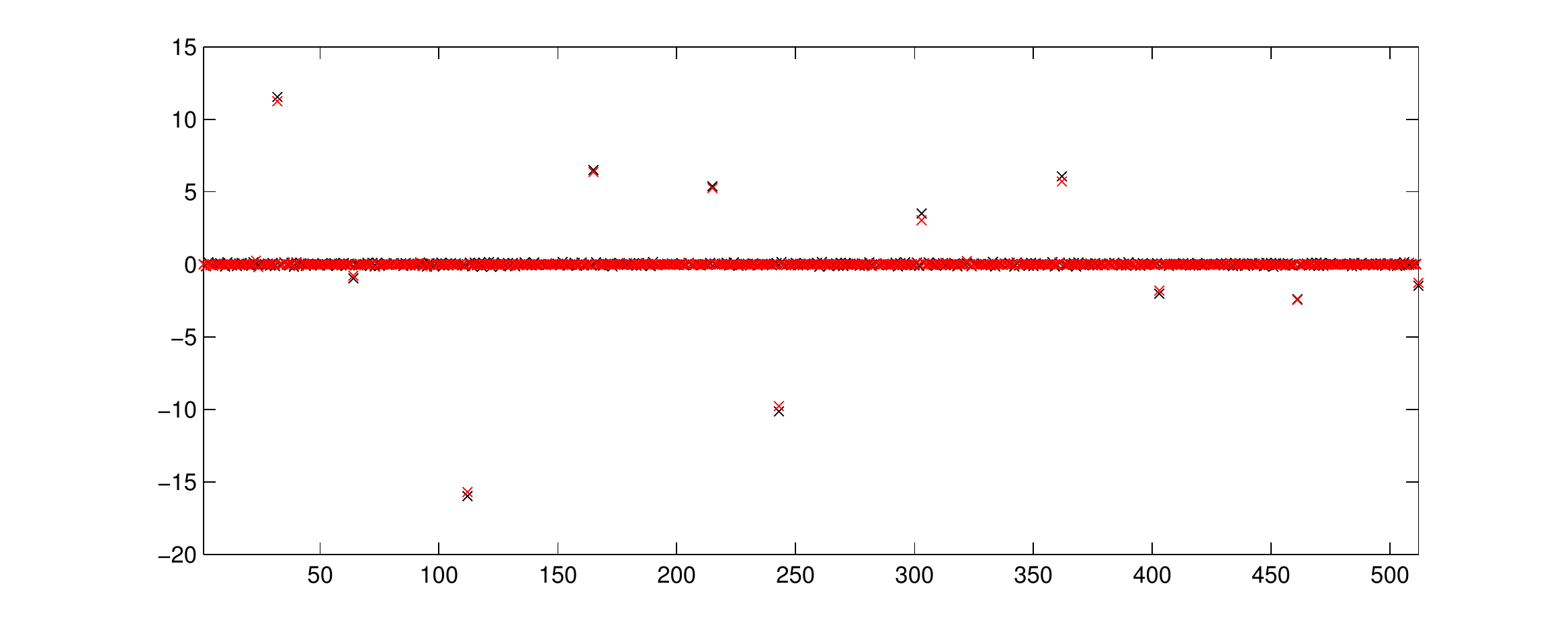}&
\includegraphics[width = 0.45\textwidth]{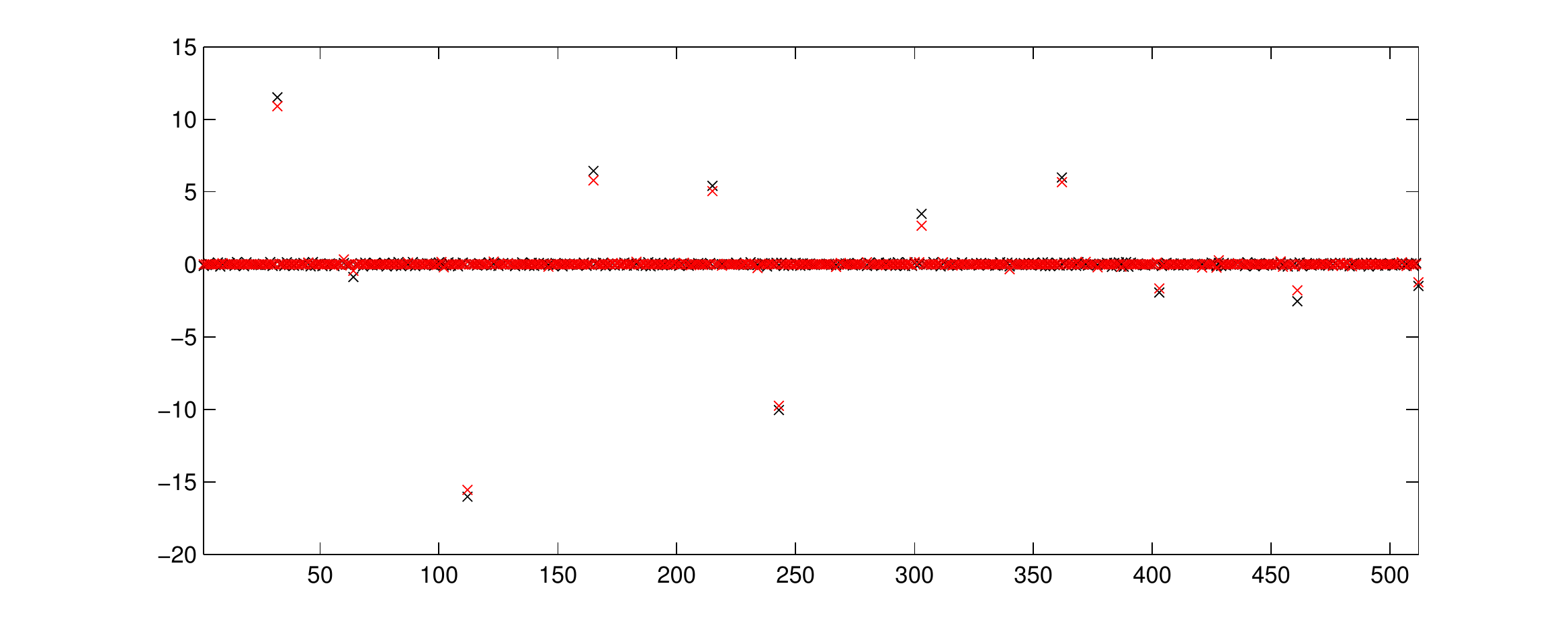}
\end{tabular}
\end{center}
\caption{Top left: the approximately sparse signal to be recovered (in black) and the signal recovered from sampling in accordance with (S2). Top right: the gradient of the approximately sparse signal to be recovered (in black) and the gradient  of the signal recovered from sampling in accordance with (S2) (in red). Each value in the gradient vectors is marked with `x' for clarity.  Bottom left: the approximately sparse signal to be recovered (in black) and the signal recovered from sampling in accordance with (S1) (in red). Bottom right: the gradient of the approximately sparse signal to be recovered (in black) and the gradient  of the signal recovered from sampling in accordance with (S1) (in red). Each value in the gradient vectors is marked with `x' for clarity. \label{fig:grad_vs_signal}}
\end{figure}

\begin{figure}
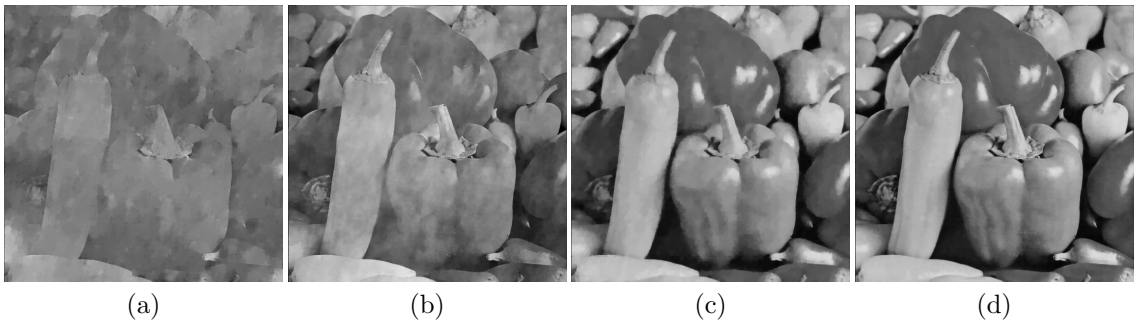
 
\begin{center}
\begin{tabular}{c@{\hspace{2pt}}c@{\hspace{2pt}}c@{\hspace{2pt}}c}
\includegraphics[width = 0.23\textwidth]{{{unifrand1_0.3318}.png}}&
\includegraphics[width = 0.23\textwidth]{{{unifrand5_0.2465}.png}}&
\includegraphics[width = 0.23\textwidth]{{{powerunifrand1_0.0683}.png}}&
\includegraphics[width = 0.23\textwidth]{{{powerunifrand5_0.0385}.png}}\\
(a) &(b) &(c) &(d)
\end{tabular}
\end{center}
\caption{(a) \& (b): reconstructions from sampling uniformly at random (plus the zero frequency sample); (a) is from sampling at 10\%, with a relative error of 0.3318; (b) is from sampling at 30\% samples with a relative error of 0.2465. (c) \& (d): reconstruction from  sampling in the semiuniform random manner described in Theorem \ref{cor:unif_samp_2D_opt}; (c) is from sampling at 10\%  with a relative error of 0.0683l (d) is from sampling at 30\% with a relative error of 0.0385.\label{fig:peppers_reconsr}}
\end{figure}

\subsection{The price of randomness}\label{sec:price}

The previous section demonstrated that one of the benefits of dense sampling at low frequencies is increased stability to inexact sparsity. This section will demonstrate that another benefit is the recovery of certain types of signals up to sparsity level $s$ from sub-$\ord{s\log N}$ samples. More specifically, although $\ord{s\log N}$ is the optimal sampling cardinality for $s$-sparse signals \cite{candes2006robust}, and this can be attained through drawing samples uniformly at random,  when one is interested in a subset of the possible $s$-sparse signals (e.g. signals whose discontinuities are sufficiently far apart), it may be unnecessary to pay the price of this $\log$ factor.  

\subsubsection{Sampling high frequencies is not necessary for coarse signals}
Theorem \ref{thm:unif_samp} and Theorem \ref{thm:min_sep_thm} offer additional insight into why dense sampling at low frequencies can outperform uniform random sampling.  Theorem \ref{thm:unif_samp} tells us that one can recover a gradient $s$-sparse signal from $\ord{s\log N}$ samples by choosing the samples uniformly at random regardless of where the nonzero gradient entries occur. Moreover, such a recovery is stable to inexact sparsity and robust to noise. However, under an additional assumption that the separation of the nonzero gradient entries is at least $1/M$, Theorem \ref{thm:min_sep_thm} stipulates that we can sample uniformly at random from the first $4M$ samples at a slightly smaller sampling order of $\ord{s \log(M) \log(s)}$ (although the provable stability and robustness bounds are worse by a factor of $\sqrt{s}$, where $s$ is the approximate sparsity). This first suggests that an understanding of the gradient structure of the underlying signal can lead to sampling patterns which will outperform uniform random sampling. Second, in order to recover an $s$-sparse signal of length $N$, one requires $\ord{s\log(N)}$ random samples and this sampling cardinality is sharp for sparse signals \cite{candes2006robust}. Thus, although such a statement guarantees the recovery of any $s$-sparse signal, there is a price of $\log(N)$ associated with the randomness introduced. However, suppose that our signal of interest (denote by $x$) is  of length $N$ and is $M$-sparse in its gradient and that these nonzero gradient entries have minimum separation of $1/M$. Then, Theorem \ref{thm:min_sep_thm} tells us that $x$ can be perfectly recovered from its first $4M+1$ Fourier samples of lowest  frequencies. Note that there is no randomness in the choice of sampling set $\Omega$, and the cardinality of $\Omega$ is linear with respect to sparsity. Observe also that a uniform random choice of $\Omega$ is guaranteed to result in  accurate reconstructions and allow for significant subsampling only if  $M\log(N)\ll N$. So in the case that $M \geq N/\log(N)$, it will be better to choose $\Omega$ to index the first $M$ samples, rather than draw the samples uniformly at random. 
 
 \subsubsection{A numerical example}
To illustrate the remarks above, consider the recovery of $x_1$ of length $N=512$ shown on the left of Figure \ref{fig:signals}. It can be perfectly recovered by solving the following minimization problem with $\Omega$ indexing the first 20 Fourier frequencies.  This accounts for $3.9\%$ of the available Fourier coefficients. For simplicity, we will present this experiment without adding noise to the samples, although similar results can be observed if noise is added. 
\begin{equation*}
\min_{z \in \mathbb{C}^{N}} \norm{z}_{TV} \text{ subject to }  \rP_\Omega \rA z = \rP_\Omega \rA x_1.
\end{equation*}
 The result of repeating this experiment over 5 trials with $\Omega$ taken to be 3.9\%, 7\%, 10\% of the  available indices, drawn uniformly at random, is shown in Table \ref{t:signal_unif}. By sampling uniformly at random, we cannot achieve exact recovery from drawing only $3.9\%$ and it is only when we sample at 10\% that we obtain exact recovery across all 5 trials.

 \begin{table} 
\begin{center}
{\small{
\begin{tabular}{| c| c| c| c|c|}

\hline
 
Trial	&		 Sampling	&	 Sampling	&	Sampling	&	 Sampling\\	
	&	 3.9\%	&	 7\%	&	9\%	& 10\%	\\\hline
1	&	0.9096	&	0.2150	&	0& 0	\\
2	&	0.7739	&	0.1915	&	0& 0	\\
3	&	0.4388	&	0		&	0.1132& 0	\\
4	&	0.7287	&	0.4396	&	0.1603& 0	\\
5	&	0.7534	&	0.3044	&	0& 0	\\\hline
\end{tabular}
   }}
  \end{center}
   \caption{Relative error of reconstructions obtained by sampling the Fourier transform of Signal 1 uniformly at random.}
\label{t:signal_unif}
 \end{table}

 \subsubsection{The need for further investigation: Structured sampling}
 To conclude, we present a numerical example to show that despite the advances in the theoretical understanding of total variation regularization for compressed sensing, there is still room for substantial improvement. Consider the following reconstruction of the resolution chart of size $528\times 500$ in Figure \ref{fig:1d_recons} from $6.5\%$ of its available Fourier coefficients using different sampling maps:
 \begin{itemize}
 \item[(i)] (Uniform sampling) $\Omega_U$ indexes  samples drawn uniformly at random,
 \item[(ii)] (Low frequency sampling) $\Omega_L$  indexes the samples of lowest Fourier frequencies, 
 \item[(iii)] (Uniform $+$ power law) $\Omega_P$  is chosen in accordance with Theorem \ref{cor:unif_samp_2D_opt}, 
 \item[(iv)] (Multilevel sampling) $\Omega_V = \Omega_{V,1}\cup \Omega_{V,2}$,  is  constructed such that $\Omega_{V,1}$ indexes all samples with frequencies no greater than $5$,  and $\Omega_{V,2}$ is constructed by first dividing up the available indices into $L$ levels in  increasing order of frequency such that $\bbP(X_j = k) = C \cdot \exp(-(bn/L)^a)$ for some appropriate constant $C$ such that we have a probability measure, $X_j$ is the $j^{th}$ element of $\Omega_{V,2}$ and $k$ belongs to the $L^{th}$ level. In this experiment, we chose $L=25$, $a=2.2$, and $b=6.5$.
 \end{itemize}

\subsubsection*{Conclusion of the experiment} 

 The following observations can be made from the sampling maps and the reconstructions    shown in Figure \ref{fig:chart_test}. 
 \begin{itemize}
\item[(i)] (Uniform sampling) Uniform random sampling yields a high relative error.
\item[(ii)] (Low frequency sampling) Sampling only the low Fourier frequencies recovers only the coarse details.
\item[(iii)] (Uniform $+$ power law) Concentrating on low Fourier frequencies but also sampling high Fourier frequencies allowed for the recovery of both the coarse and fine details.
 \item[(iv)] (Multilevel sampling)  Similarly to (iii), this allowed for the recovery of both the coarse and fine details, but the reconstruction is substantially better than that of the uniform $+$ power law.
\end{itemize}
  
 So, uniform random sampling maps are applicable only in the case of extreme sparsity due to the price of a $\log$ factor, while either fully sampling  or subsampling the low frequencies will be applicable when we aim to only recover low resolution components of the underlying signal. This suggests that variable density sampling patterns are successful because they accommodate for a combination of these two scenarios -- when there are both high and low resolution components which we want to recover and some sparsity -- sampling fully at the low frequencies will allow for recovery of coarse details without the price of a $\log$ factor, while increasingly subsampling at high frequencies will allow for the recovery of fine details up to a $\log$ factor. 
 One can essentially repeat this experiment for any natural image to observe the same phenomenon: by choosing the samples uniformly at random, we will be required to sample more than is necessary. 

Note that the theoretical results (Theorem \ref{thm:min_sep_thm}) of this paper provide only a very basic understanding of how the distribution of the Fourier coefficients favours the recovery of certain types of signals and can allow for sub-$\ord{s\log N}$ samples. However, Figure \ref{fig:chart_test} suggests that there exists a much deeper connection between the gradient sparsity structure of a signal and the distribution of the Fourier samples, and a thorough understanding of this connection could lead to more efficient sampling strategies.

\begin{figure}  
\centering
\includegraphics[width=0.42\textwidth]{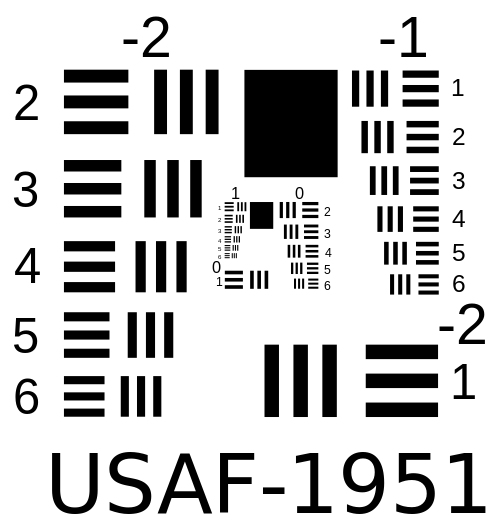}
\caption{The 1951 USAF resolution test chart of size $528\times 500$.
\label{fig:1d_recons}}
\end{figure}

\begin{figure}  
\begin{center}
\small
$\begin{array}{cccc}
 \Omega_{U}& \Omega_{L} &  \Omega_{V} & \Omega_{P} \\
 \includegraphics[ width=0.22\textwidth]{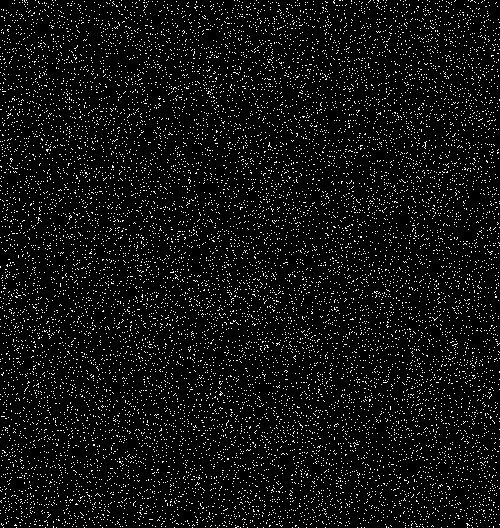}&
\includegraphics[ width=0.22\textwidth]{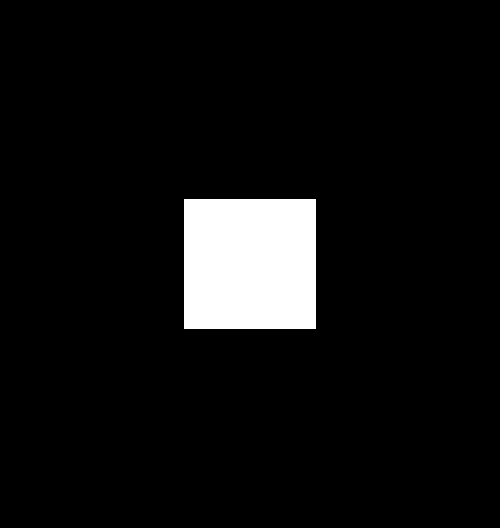} &
\includegraphics[ width=0.22\textwidth]{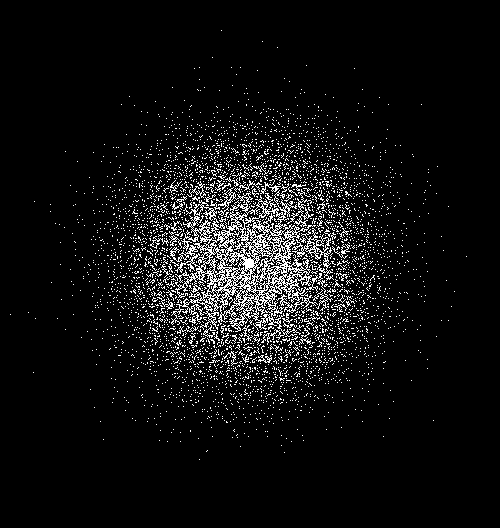}&
\includegraphics[ width=0.22\textwidth]{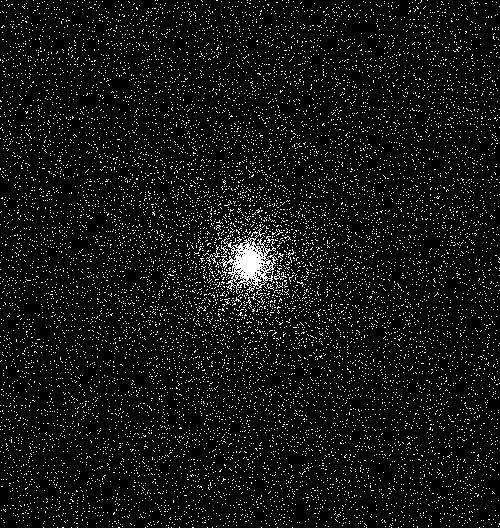} 
\\\\
\text{Reconstruction from } \Omega_{U}&
\text{Reconstruction from } \Omega_{L} &
\text{Reconstruction from }\Omega_{V} &\text{Reconstruction from } \Omega_{P}\\
\epsilon_{rel} = 0.3909& \epsilon_{rel} = 0.0698&
\epsilon_{rel} = 0.0273 & \epsilon_{rel} = 0.0940\\
\includegraphics[ width=0.22\textwidth]{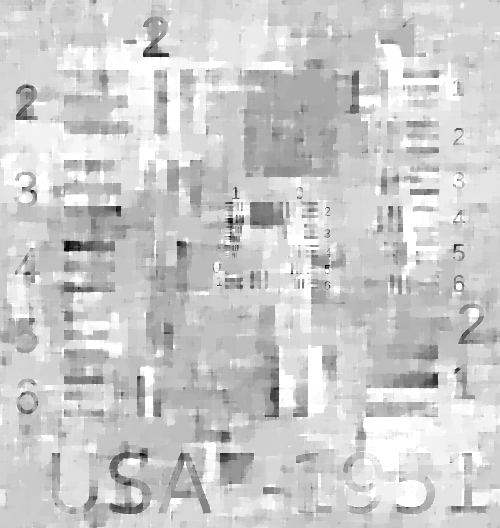}&
\includegraphics[ width=0.22\textwidth]{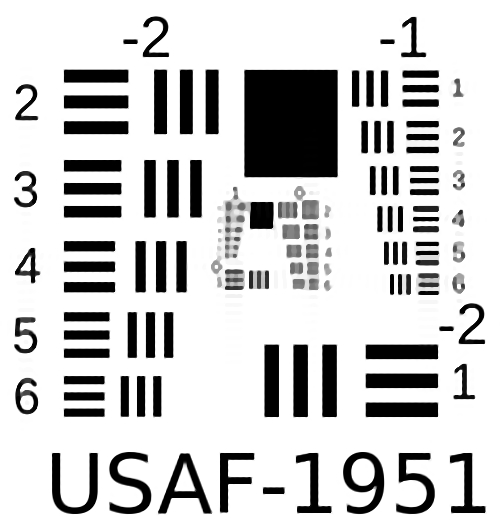} &
\includegraphics[ width=0.22\textwidth]{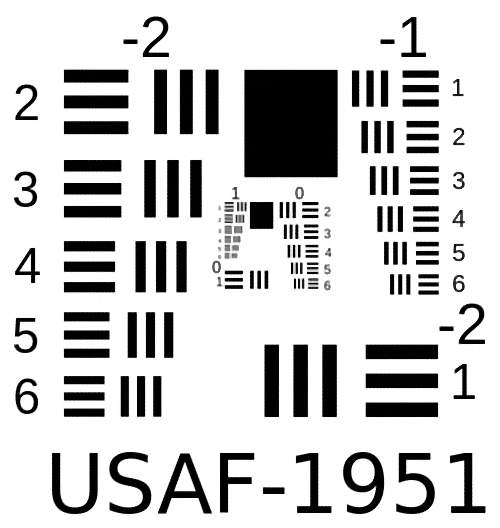} &
\includegraphics[ width=0.22\textwidth]{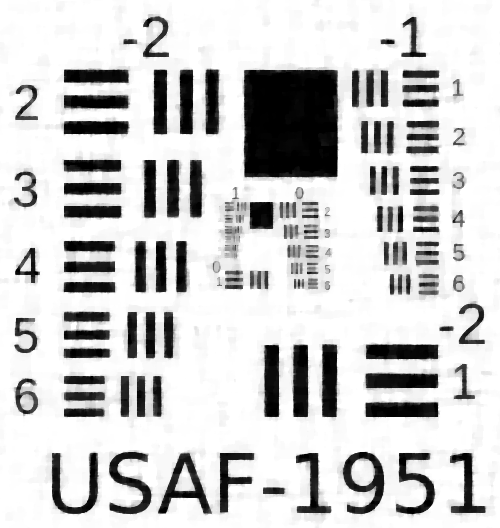} 
\\
\includegraphics[ width=0.22\textwidth]{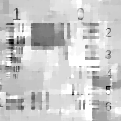}&
\includegraphics[ width=0.22\textwidth]{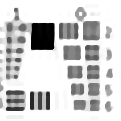} &
\includegraphics[ width=0.22\textwidth]{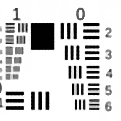} &
\includegraphics[ width=0.22\textwidth]{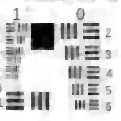} 
\end{array}$
\caption{
The top row shows the Fourier sampling maps, each indexing 6.5\% of the available Fourier samples. The middle row shows the reconstructed images and their relative errors.  The bottom row zooms in on  the reconstructions  for pixels in $[180,300]\times[220, 340]$.
\label{fig:chart_test}}
\end{center}
\end{figure}

\subsubsection*{Relationship to the recovery of wavelet coefficients from Fourier samples}
 Theorem \ref{thm:min_sep_thm} shows that under an additional assumption on the minimum separation distance on the sparsity structure of the underlying signal, we may draw the samples from only samples of low Fourier frequencies. Furthermore, the last sentence of Theorem \ref{thm:min_sep_thm} implies that if  the underlying signal has gradient support $\Delta$ with $\nu_{\min}(\Delta, N) = M^{-1}$, then the number of samples required for perfect recovery is $\ord{M}$, and in this case, there is no probability or $\log$ factor involved. As explained in this section, this explains why sampling only low frequency coefficients can lead to superior reconstruction quality when compared with sampling at random. 
This result is reminiscent of the result from \cite{gs_l1} which shows that the first $M$ Fourier coefficients of lowest frequencies will stably recover the first $cM$ wavelet coefficients  of lowest dilation factors for some  constant $c\leq 1$. Furthermore, this recovery can be achieved by solving an appropriate $\ell^1$ minimization problem. Note that this is a linear relationship between the number of samples and the number of recovered wavelet coefficients and there is no $\log$ factor involved. The recovery of wavelet coefficients from Fourier samples is also another application of compressed sensing in which variable density sampling patterns are preferred over uniform random sampling patterns. The work of \cite{adcockbreaking}  provides analysis to explain this phenomenon and shows that the strength of variable density sampling patterns exists because they combine the linear relationship between the Fourier samples of lowest frequencies and the wavelets of lowest dilation factors, and exploit the benefits of randomness at the expense  of $\log$ factors in the sampling cardinality.

\section{Proofs}

Throughout this section, given $a,b\in\bbR$, $a \lesssim b$ denotes $a \leq C \cdot b$ for some numerical constant $C$ and $a\gtrsim b$ denotes $a\geq C \cdot b$ for some numerical constant $C$. Given $x\in\bbC^N$, $\sgn(x) \in \bbC^N$ is such that $\sgn(x)_j = x_j/\abs{x_j}$ if $x_j\neq 0$ and $\sgn(x)_j = 0$ otherwise. Also, for $j\in\bbZ$, $e_j$ is the vector whose $j^{th}$ entry is 1 and is zero elsewhere. Given $\alpha\in\bbC^N$, let $\mathrm{diag}(\alpha)$ denote the diagonal matrix whose diagonal is $\alpha$.

First, by standard compactness arguments, a minimizer to (\ref{eq:noise_tv}) necessarily exists for any choice of $\Omega$, so we will henceforth derive error bounds given a minimizer of (\ref{eq:noise_tv}). To outline our proof strategy, we begin with a brief description of a typical compressed sensing approach towards establishing recovery guarantees.
In order to show that one can guarantee the recovery of a signal $x\in\bbC^N$ in a manner which is stable to noisy observations $y = \rP_\Omega \rA x + \eta$ with $\nm{\eta}_2\leq \delta$  and inexact sparsity (up to some  $\Delta \subset \br{1,\ldots, N}$) by solving
\be{\label{eq:cs_typical}
\min_{z\in\bbC^N} \nm{z}_1 \text{ subject to } \nm{\rP_\Omega \rA z - y}_2 \leq \delta,
}
standard compressed sensing arguments carry out the following two steps.
\begin{enumerate}
\item[(i)] Show that stable and robust recovery holds provided that $\rP_\Omega \rA\rP_\Delta$ is close to an isometry, and there exists some $\rho = \rA^* \rP_\Omega w$ (often referred to as the dual certificate) satisfying $\nm{\rP_\Delta \rho - \rP_\Delta \sgn(x)}_2 \leq \beta_1$, $\nm{\rP_\Delta^\perp \rho}_\infty \leq \beta_2$ and $\nm{w}_2 \leq \sqrt{s}\cdot \gamma$ for some appropriate $\beta_1, \beta_2 \in (0,1)$ and $\gamma>0$.
\item[(ii)] Derive conditions on the sampling index set $\Omega$ such that the sufficient conditions from (i) are satisfied, i.e. $\rP_\Omega \rA \rP_\Delta$ is close to an isometry and the dual certificate can be constructed.
\end{enumerate}
The minimization problem which we are interested in is slightly different from (\ref{eq:cs_typical}) since we minimize  $\nm{\rD \cdot}_1$ rather than $\nm{\cdot}_1$, nonetheless, the proofs of Theorems \ref{thm:unif_samp} and \ref{thm:min_sep_thm} will follow the strategy outlined above. In particular, we will first show that stable gradient recovery implies stable signal recovery then follow the above procedure to show that stable gradient recovery can be achieved under the hypotheses of our theorems.

Note that the difficulty in proving recovery estimates is often in the construction of the dual certificate in step (ii) which, in our case, where $\rA$ is the discrete Fourier transform, is the problem of constructing  a trigonometric polynomial $f(t) = \sum_{j\in \Omega} w_j e^{2\pi i t j}$ such that $f$ almost interpolates $\rP_\Delta(\sgn(\rD x))$ on $\tilde \Delta = \br{j/N: j\in\Delta}$ and such that the absolute value of $f$ is sufficiently small away from the support set $\tilde \Delta$. However, we will exploit some existing constructions from the study of (\ref{eq:cs_typical}), where $\rA$ is the discrete Fourier transform -- the proof of Theorem \ref{thm:unif_samp} will utilize a dual certificate from \cite{candes2011probabilistic}, which was constructed using the golfing scheme introduced by Gross in \cite{gross2011recovering,kueng2014ripless}; the proof of Theorem \ref{thm:min_sep_thm} will utilize a dual certificate constructed in \cite{tang2012compressive} through interpolation via a squared Fej\'{e}r kernel.
The bulk of our proofs will essentially show that the existence of these dual certificates is sufficient to guarantee stable gradient recovery.

\subsection{Stable gradient recovery implies stable signal recovery}
As explained, the crucial step of our proofs will be to show that for $z\in\bbC^N$,  and $\gamma>0$ such that $\nm{\rP_{\Omega}\rA z}_2\leq \gamma$
$$
\nm{z}_2 \leq C_N (\gamma + \nm{z}_{TV}),
$$
for some $C_N>0$ which may depend on $\Omega$ and $N$. We first show in Section \ref{sec:poincare} that a result of this form follows easily from the Poincar\'{e} inequality, and we will use the results of Section \ref{sec:poincare} to establish the proofs for Theorems \ref{thm:unif_samp}, \ref{cor:unif_samp_2D} and  \ref{thm:min_sep_thm} where $\Omega$ is chosen uniformly at random. In the case where $\Omega$ is constructed by a combination of uniform random sampling and variable density sampling, Section \ref{sec:stab_rip} will demonstrate that we can obtain a stronger result via  the restricted isometry property, and its main result will be used to establish the stronger statements of Theorems \ref{thm:near_optimal} and \ref{cor:unif_samp_2D_opt}.

\subsubsection{Stability via the Poincar\'{e} inequality}\label{sec:poincare}
In Theorems \ref{thm:unif_samp},  \ref{thm:min_sep_thm} and  \ref{cor:unif_samp_2D}, the constraint satisfied by the minimizer $\hat x$ to be analysed is of the form $\nm{\rP_\Omega \rA \hat x - y}_2\leq \sqrt{m}\cdot\delta$. Since it is always assumed that  $0\in\Omega$, we know that the error $z = \hat x - x$ necessarily satisfies 
$$\abs{(\rA z)_0} =\abs{  \sum_{j=1}^N z_j} \leq
\nm{\rP_\Omega \rA \hat x - y}_2 + \nm{\rP_\Omega \rA  x - y}_2\leq 2\delta\sqrt{m}.$$ 
We will utilize this fact and the Poincar\'{e} inequality in Lemma \ref{lem:stab_grad_stab_sig}, which shows that it suffices to only derive error bounds for the recovered gradient, $\nm{\rD z}_1$, when proving Theorems \ref{thm:unif_samp}, \ref{cor:unif_samp_2D} and \ref{thm:min_sep_thm}. We first state the discrete Poincar\'{e} inequality, which is a direct corollary of the classical Sobolev embedding inequality for functions of bounded variation \cite{ambrosio2000functions}.

\begin{lemma}[Poincar\'{e} inequality, see \cite{needell2013stable, ambrosio2000functions}]\label{lem:poincare}

\begin{itemize}
\item[(i)] Let $z\in\bbC^N$ be such that $\sum_{j=1}^N z_j = 0$. Then
$$
\nmu{  z}_2 \leq \sqrt{N}\cdot \nmu{z}_{TV}
$$
\item[(ii)] Let $z\in\bbC^{N\times N}$ be such that $\sum_{k=1}^N\sum_{j=1}^N z_{j, k} = 0$. Then
$$
\nmu{  z}_2 \leq \nmu{z}_{TV}.
$$

\end{itemize}

\end{lemma}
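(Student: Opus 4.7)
The plan is to prove two discrete Sobolev-type inequalities in which the periodic boundary conditions built into $\rD$ yield a telescoping representation of $z$ via its discrete gradient, while the mean-zero hypothesis fixes the ``constant of integration''. The one-dimensional case admits a short elementary argument; the two-dimensional case rests on a genuinely 2D isoperimetric input.

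For (i), I would first reduce to real-valued signals by observing that $\sum_j z_j = 0$ forces $\sum_j \mathrm{Re}(z_j) = \sum_j \mathrm{Im}(z_j) = 0$, and that $|(\rD\,\mathrm{Re}(z))_j|, |(\rD\,\mathrm{Im}(z))_j| \leq |(\rD z)_j|$. For real mean-zero $u \in \bbR^N$, pick indices $j_+, j_-$ achieving $\max u$ and $\min u$; the mean-zero hypothesis forces $\min u \leq 0 \leq \max u$, hence $\nm{u}_\infty \leq \max u - \min u$. The two arcs of the cyclic graph joining $j_-$ and $j_+$ each accrue total variation at least $\max u - \min u$ (triangle inequality along each arc), so $\nm{\rD u}_1 \geq 2(\max u - \min u) \geq 2\nm{u}_\infty$. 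Combining this $\ell^\infty$ bound with $\nm{z}_\infty^2 \leq \nm{\mathrm{Re}(z)}_\infty^2 + \nm{\mathrm{Im}(z)}_\infty^2$ and the trivial $\nm{z}_2 \leq \sqrt{N}\nm{z}_\infty$ yields (i) with constant even smaller than $\sqrt{N}$.

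For (ii), the $\sqrt{N}$ loss incurred by the $\ell^\infty$-to-$\ell^2$ step is unaffordable, so one cannot simply slice and apply (i). Instead I would invoke the discrete Sobolev embedding $W^{1,1} \hookrightarrow \ell^2$ on the torus $\bbZ_N^2$, proved via the coarea formula $\nm{\rD z}_1 \gtrsim \int_\bbR \mathrm{Per}(\br{z > t})\,dt$ (for real $z$, with $\mathrm{Per}$ the discrete edge perimeter on $\bbZ_N^2$) combined with the discrete isoperimetric inequality $\mathrm{Per}(E) \gtrsim \min(|E|, N^2 - |E|)^{1/2}$ and the layer-cake identity $\nm{z}_2^2 = 2\int_0^\infty t\cdot |\br{|z|>t}|\,dt$. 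The mean-zero hypothesis is used to control the level sets so that the isoperimetric bound applies usefully, and integrating in $t$ delivers $\nm{z}_2 \leq C\nm{\rD z}_1$ with an absolute constant. Extension from real to complex $z$ uses the pointwise domination $|(\rD_i\,\mathrm{Re}(z))_{j,k}|, |(\rD_i\,\mathrm{Im}(z))_{j,k}| \leq |(\rD z)_{j,k}|$. The full argument is carried out in \cite{needell2013stable}, drawing on the classical reference \cite{ambrosio2000functions}.

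The main obstacle is the 2D case: the naive slice-by-slice reduction to (i) loses a factor of $\sqrt{N}$ that cannot be recovered, so the argument genuinely needs a two-dimensional input such as the torus isoperimetric inequality with a dimension-free constant. Beyond this one non-trivial ingredient, the remainder is a careful reduction from complex to real signals together with bookkeeping of constants.
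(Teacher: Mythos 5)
Your part (i) is correct and complete: in one dimension taking real and imaginary parts does commute with $\rD$, so $|(\rD\,\mathrm{Re}(z))_j|=|\mathrm{Re}(z_j-z_{j+1})|\le |(\rD z)_j|$, and the two-arc argument on the cycle gives $\nm{u}_\infty\le\tfrac12\nm{\rD u}_1$ for real mean-zero $u$, hence $\nm{z}_2\le\sqrt{N/2}\,\nm{z}_{TV}$, which is even sharper than the stated bound. For context, the paper offers no proof of this lemma; it defers to \cite{needell2013stable, ambrosio2000functions}, and the cited reference proves the two-dimensional statement via the decay of bivariate Haar coefficients of BV functions (the route reproduced in Lemma \ref{lem:discrete_haar_decay_2d}) rather than via a discrete coarea/isoperimetric argument. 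Your route for the real 2D case is a legitimate alternative, with the caveat that it yields $\nm{z}_2\le C\nm{z}_{TV}$ for an absolute constant rather than the constant $1$ stated in (ii); this is harmless, since every use of the lemma in the paper is up to constants.

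The genuine gap is in your complex-to-real reduction in part (ii). With the paper's definition $\rD x=\rD_1x+i\rD_2x$, the claimed pointwise domination $|(\rD_i\,\mathrm{Re}(z))_{j,k}|\le|(\rD z)_{j,k}|$ is false for complex $z$, because multiplication by $i$ mixes real and imaginary parts: writing $a_i=(\rD_i\,\mathrm{Re}(z))_{j,k}$ and $b_i=(\rD_i\,\mathrm{Im}(z))_{j,k}$, one has $|(\rD z)_{j,k}|^2=(a_1-b_2)^2+(a_2+b_1)^2$, which can vanish while $a_1\ne0$. Concretely, for $4\mid N$ the vector $z_{j,k}=i^{\,j-k}$ is mean zero, has $\nm{z}_2=N$, and satisfies $(\rD z)_{j,k}=i^{\,j-k}\bigl[(i-1)+i(-i-1)\bigr]=0$, so no inequality of the form $\nm{z}_2\le C\nm{\rD z}_1$ can hold for complex inputs under this definition of $\rD$, and in particular no reduction to the real case can succeed. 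The statement (and your reduction) is correct if the isotropic TV is read, as in \cite{needell2013stable}, with the modulus taken componentwise, i.e. $\sum_{j,k}\bigl(|(\rD_1 z)_{j,k}|^2+|(\rD_2 z)_{j,k}|^2\bigr)^{1/2}$, which coincides with the paper's $\nm{\rD_1 z+i\rD_2 z}_1$ only for real $z$; under that reading the domination $|(\rD_i\,\mathrm{Re}(z))_{j,k}|\le\bigl(|(\rD_1 z)_{j,k}|^2+|(\rD_2 z)_{j,k}|^2\bigr)^{1/2}$ does hold and your argument goes through. As written, however, the reduction step fails, and the failure is not merely cosmetic.
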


\begin{lemma}\label{lem:stab_grad_stab_sig} Let $N\in\bbN$. Let $m\in\bbN$ be such that $m\leq N$ and let $\delta>0$.
\begin{itemize}
\item[(i)]  Let $\rD$ be the finite differences operator on $\bbC^N$.
Let $z\in\bbC^N$ and suppose that $\absu{  \sum_{j=1}^N z_j} \lesssim \delta\sqrt{m}$. Then, $$
\frac{\nmu{z}_2}{\sqrt{N}} \lesssim \delta + \nmu{\rD z}_1.
$$
\item[(ii)]  Let $\rD$ be the finite differences operator on $\bbC^{N\times N}$. Let $z\in\bbC^{N\times N}$ and suppose that $\absu{  \sum_{j=1}^N z_j} \lesssim \delta\sqrt{m}$. Then, $$
\nmu{z}_2 
\lesssim \delta + \nmu{\rD z}_1.
$$
\end{itemize}

\end{lemma}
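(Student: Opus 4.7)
The plan is to decompose $z$ into its mean component and a mean-zero residual, then apply the discrete Poincar\'{e} inequality of Lemma \ref{lem:poincare} to the residual. This works because the finite differences operator $\rD$ annihilates constants, so the gradient of the residual equals the gradient of $z$. The hypothesis $|\sum_j z_j|\lesssim \delta\sqrt{m}$ controls the mean component directly.

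For part (i), let $\bar z = N^{-1}\sum_{j=1}^N z_j$ and write $z = w + \bar z \mathbf{1}$ where $\mathbf{1}\in\bbC^N$ is the all-ones vector. Then $\sum_j w_j = 0$ and $\rD w = \rD z$ since $\rD \mathbf{1}=0$. Lemma \ref{lem:poincare}(i) then gives $\nm{w}_2 \leq \sqrt{N}\,\nm{\rD z}_1$. On the other hand, $\nm{\bar z\mathbf{1}}_2 = \sqrt{N}\,|\bar z| = |\sum_j z_j|/\sqrt{N} \lesssim \delta\sqrt{m/N} \leq \delta$ using $m\leq N$. The triangle inequality then yields $\nm{z}_2 \leq \sqrt{N}\,\nm{\rD z}_1 + \delta$, and dividing by $\sqrt{N}$ gives the claim.

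For part (ii), the argument is entirely analogous with $\bar z = N^{-2}\sum_{j,k} z_{j,k}$ and $w = z-\bar z\mathbf{1}$, where now $\mathbf{1}\in\bbC^{N\times N}$ is the all-ones matrix. Lemma \ref{lem:poincare}(ii) gives $\nm{w}_2 \leq \nm{\rD z}_1$ directly (no $\sqrt{N}$ factor, matching the different discretization scaling noted in Remark \ref{rem:optimal}). The mean component satisfies $\nm{\bar z\mathbf{1}}_2 = N|\bar z| = |\sum z_{j,k}|/N \lesssim \delta\sqrt{m}/N\leq \delta$, where the last inequality uses $m\leq N\leq N^2$. Combining via the triangle inequality yields the result.

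There is no serious obstacle: the only subtlety is checking that the hypothesis $m\leq N$ gives the correct scaling for the mean component in each dimension (namely $\sqrt{m/N}\leq 1$ in 1D and $\sqrt{m}/N\leq 1$ in 2D), which matches exactly the different $\sqrt{N}$ normalizations that appear between the two parts of Lemma \ref{lem:poincare}. This consistency is precisely what makes the discretization convention in Remark \ref{rem:optimal} natural.
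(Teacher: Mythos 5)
Your proof is correct and follows essentially the same route as the paper: subtract the mean, apply the discrete Poincar\'{e} inequality of Lemma \ref{lem:poincare} to the mean-zero part, and bound the mean component using the hypothesis $\absu{\sum_j z_j}\lesssim \delta\sqrt{m}$ together with $m\leq N$. The paper writes out only part (i) and notes that (ii) is identical, whereas you spell out both cases, but the argument is the same.
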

\prf{
We prove only (i), the proof of (ii) is identical except for the application of (ii) of the Poincar\'{e} inequality. 
First note that
$\tilde z = (\tilde{z}_j)_{j=1}^N$, where $\tilde{z}_j = z_j - \frac{1}{N}\sum_{j=1}^N z_j$ has mean zero. So, by the Poincar\'{e} inequality,
$$
\frac{1}{\sqrt{N}} \nmu{\tilde z}_2 \lesssim \nmu{\rD z}_1.
$$
Therefore,
$$
\frac{\nmu{z}_2}{\sqrt{N}} \lesssim \abs{\frac{1}{N}\sum_{j=1}^N z_j} + \nmu{\rD z}_1.
$$
Since $\abs{  \sum_{j=1}^N z_j} \leq 2\delta\sqrt{m}$, this implies that
$$
\frac{\nmu{z}_2}{\sqrt{N}} \lesssim \frac{\delta\sqrt{m}}{N} + \nmu{\rD z}_1
\leq \delta + \nmu{\rD z}_1.
$$

}

\subsubsection{Stability via the restricted isometry property}\label{sec:stab_rip}

To prove Theorem \ref{thm:near_optimal}, we will first show that since $\Omega$ includes $m$ samples drawn uniformly at random, we can obtain a stable and robust error bound on the recovered gradient. Then, we will show that stable gradient recovery implies stable signal recovery in a strong sense when $\Omega$ contains $m$ i.i.d. samples drawn in accordance with the probability distribution $p$ defined in Theorem \ref{thm:near_optimal}.
The main result of this section  will establish the latter, specifically, we will show that if we obtain a bound on $\nm{\hat x - x}_{TV}$ for a minimizer $\hat x$ of (\ref{eq:noise_tv}), then we have a bound on $\nm{\hat x - x}_2$.
\begin{proposition}\label{prop:gradtosignal_stability}
Let $N = 2^R$ for some $R\in\bbN$.
Let $\rA$ be the (non-unitary) discrete Fourier transform on $\bbC^N$ as defined in Theorem \ref{thm:original}. Suppose that $z$ is such that $\nm{\rP_{\Omega}\rA z}_2 \leq \sqrt{m}\gamma$, where $\Omega = \br{k_1,\ldots, k_m}$ is chosen i.i.d. such that  for each $j=1,\ldots, m$, 
$$
\bbP(k_j = n) = p(n)^{-1}, \quad p(n) = C \log(N)\max\br{1, \abs{n}}
$$
for some appropriate $C>0$ 
and $m\gtrsim 
s \cdot (\log(N) + \log(\epsilon^{-1}))$. Then with probability exceeding $1-\epsilon$, 
$$
\nm{z}_2 \lesssim \sqrt{\log(N)}\cdot \gamma +  \frac{ \nm{z}_{TV}\cdot\sqrt{N}\cdot \log^2(s)\log(N)(\log(s)+\log\log(N))}{s}.
$$
\end{proposition}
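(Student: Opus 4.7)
The plan is to pass to the discrete Haar wavelet basis and combine a sharp compressibility estimate for the Haar coefficients of BV signals with a preconditioned concentration inequality tailored to the variable density sampling. Let $\mathrm{W}$ denote the orthonormal discrete Haar transform on $\bbC^N$ (well defined because $N=2^R$) and set $v=\mathrm{W}z$, so that $\nm{z}_2=\nm{v}_2$. Define the $m\times N$ preconditioned sampling matrix $\mathrm{B}$ whose $j^{\mathrm{th}}$ row is $\sqrt{p(k_j)/(mN)}\cdot e_{k_j}^\ast \rA\mathrm{W}^\ast$; the normalisation is chosen so that $\mathbb{E}[\mathrm{B}^\ast \mathrm{B}]=I$. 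Since $p(n)\leq C\,N\log N$ for every $n$, the hypothesis $\nm{\rP_\Omega \rA z}_2\leq\sqrt{m}\,\gamma$ translates directly into the pointwise bound $\nm{\mathrm{B}v}_2\lesssim\sqrt{\log N}\,\gamma$.

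First I would derive a sharp decay estimate for the sorted Haar coefficients of a one dimensional signal of bounded variation. At scale $j$ there are $2^j$ Haar coefficients, and a discrete Poincar\'{e} inequality on each dyadic support of length $N/2^j$ yields $\abs{v_{j,k}}\lesssim \sqrt{N/2^j}\cdot\mathrm{TV}_{\mathrm{loc}}$, whence $\nm{v^{(j)}}_1\lesssim \sqrt{N/2^j}\,\nm{z}_{TV}$. A level counting argument (optimising the threshold over scales) then gives
\[
\abs{v_{(k)}} \lesssim \sqrt{N}\,\nm{z}_{TV}\big/k^{3/2}
\]
for the $k^{\mathrm{th}}$ largest Haar coefficient in magnitude. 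Let $T$ index the top $s$ entries of $v$ and partition $T^c$ into consecutive size-$s$ blocks $T_1, T_2, \ldots$ in decreasing order of magnitude; then this decay gives the two tail bounds
\[
\nm{v_{T^c}}_2 \lesssim \frac{\sqrt{N}\,\nm{z}_{TV}}{s}, \qquad \sum_{j\geq 1}\nm{v_{T_j}}_2 \lesssim \frac{\sqrt{N}\,\nm{z}_{TV}}{s}.
\]

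Next I would show that $\mathrm{B}$ acts as an approximate $\ell^2$-isometry on each of the fixed size-$s$ supports $T, T_1, T_2, \ldots$. A direct Fourier calculation shows that $\abs{\hat\psi_{j,k}(n)}\lesssim \sqrt{N/2^j}\cdot\min\br{1,\,2^j/\abs{n}}$ for every scale $j$ and location $k$, so the preconditioned entries $\sqrt{p(n)/N}\cdot\abs{\hat\psi_{j,k}(n)}$ are uniformly $\ord{\sqrt{\log N}}$: the distribution $p(n)\propto 1/\max\br{1,\abs{n}}$ is exactly calibrated to balance the scale-dependent Fourier mass of the Haar wavelets. With this incoherence bound in hand, matrix Bernstein applied to the sum of rank one matrices $\mathrm{B}^\ast \mathrm{B}$ (restricted to each fixed $s$-dimensional coordinate subspace) implies that $\mathrm{B}$ is $(1\pm\delta)$-isometric on each of $T, T_1, T_2, \ldots$ with probability at least $1-\epsilon$ provided $m\gtrsim s\log(N)\log(\epsilon^{-1})$; a union bound over the $\ord{N/s}$ blocks $T_j$ contributes the polylog factors that appear in the stated error bound. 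This coherence/RIP step is the main technical obstacle: standard universal RIP results for 1D Fourier-Haar need substantially more log factors than the near-linear sample cardinality available here, and it is the pointwise character of the claim (a specific $z$, not all sparse vectors) that permits the tighter count.

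Finally, writing $v=v_T+\sum_{j\geq 1}v_{T_j}$, the approximate isometry on $T$, the triangle inequality, and the approximate isometries on each $T_j$ give
\begin{align*}
\nm{v_T}_2 &\lesssim \nm{\mathrm{B}v_T}_2 \leq \nm{\mathrm{B}v}_2 + \sum_{j\geq 1}\nm{\mathrm{B}v_{T_j}}_2 \\
&\lesssim \sqrt{\log N}\,\gamma + \sum_{j\geq 1}\nm{v_{T_j}}_2 \lesssim \sqrt{\log N}\,\gamma + \frac{\sqrt{N}\,\nm{z}_{TV}}{s},
\end{align*}
and combining with $\nm{v_{T^c}}_2 \lesssim \sqrt{N}\,\nm{z}_{TV}/s$ yields $\nm{z}_2=\nm{v}_2 \lesssim \sqrt{\log N}\,\gamma + \sqrt{N}\,\nm{z}_{TV}/s$ up to the polylog factors accumulated in the coherence estimate and the union bound, which match those in the statement.
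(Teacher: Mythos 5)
Your overall architecture mirrors the paper's: pass to the Haar domain, use the $k^{-3/2}$ decay of sorted Haar coefficients of a BV signal, exploit the fact that the density $p(n)\propto 1/\max\{1,|n|\}$ precisely flattens the Fourier--Haar coherence to $\ord{\sqrt{\log N}}$, and close with a block decomposition. The tail estimates and the translation of the hypothesis into $\nmu{\mathrm{B}v}_2\lesssim\sqrt{\log N}\,\gamma$ are all fine. The gap is in the probabilistic step. You propose to establish an approximate isometry only on the specific supports $T,T_1,T_2,\ldots$ via matrix Bernstein plus a union bound over $\ord{N/s}$ blocks. But those blocks are determined by the ordering of the Haar coefficients of $z$, and in the only place this proposition is used, $z=x-\hat x$ where $\hat x$ is the minimizer and hence depends on the random set $\Omega$. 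A per-support concentration bound followed by a union bound over $z$-dependent sets is circular; to cover all supports that the random $z$ could produce you must union over all $\binom{N}{s}$ subsets, and plain matrix Bernstein then costs $m\gtrsim s\cdot s\log(N/s)$, quadratic in $s$. This is exactly why the paper invokes the RIP for bounded orthonormal systems (Theorem 12.32 of Foucart--Rauhut, proved by chaining rather than a union bound): it gives a \emph{uniform} isometry over all sparse supports at near-linear sample cost, after which the blocking argument is deterministic and valid for every $z$ satisfying the constraint. Your parenthetical claim that ``the pointwise character of the claim \ldots permits the tighter count'' is backwards for this particular step --- the non-uniform savings in the paper occur in the dual-certificate part of the main theorems, not here.

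Even setting uniformity aside, the bookkeeping does not close as written: a union bound over $N/s$ fixed blocks at failure level $\epsilon s/N$ each already forces $m\gtrsim s\log(N)\log(N/\epsilon)$, exceeding the hypothesis $m\gtrsim s(\log N+\log\epsilon^{-1})$. The resolution, which you gesture at but do not carry out, is the paper's: with $m\gtrsim s\log N$ samples one only obtains the (uniform) RIP of order $r=s/\bigl(\log^2(s)\log(N)\log(s\log N)\bigr)$, so the blocks must have size $r$ rather than $s$; rerunning your tail sums with block size $r$ turns $\sqrt{N}\nmu{z}_{TV}/s$ into $\sqrt{N}\nmu{z}_{TV}/r$, which is precisely where the $\log^2(s)\log(N)(\log(s)+\log\log(N))$ factor in the statement comes from. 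As it stands, attributing those polylogs vaguely to ``the coherence estimate and the union bound'' hides the fact that your sample count and your error bound cannot simultaneously be correct without this reduction of the block size.
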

The proof of this result follows the proof of a synonymous two dimensional result from \cite{needell2013stable} (see also \cite{ward2013stable}); we will require the following two ingredients.
\begin{enumerate}
\item The discrete version of the fact that the decay in the Haar wavelet coefficients of a function of bounded variation can be controlled by its bounded variation norm.
\item Bounds concerning when a matrix constructed from a discrete Fourier system and a discrete Haar system satisfies a restricted isometry property (defined in Definition \ref{def:rip}).
\end{enumerate}  Note however that the arguments in \cite{needell2013stable} deal only with the recovery of two (or higher) dimensional  vectors and cannot be directly applied to deduce Proposition {\ref{prop:gradtosignal_stability}.

\subsubsection*{The decay of Haar coefficients and total variation}

In this section, we present some results which demonstrate how the Haar coefficients in a nonlinear approximation of a signal are controlled by its total variation norm. First, we begin with some definitions.
\begin{definition}\label{def:BV}
Let $n\in\bbN$. Given $f\in L^1([0,1)^n)$, the total variation of $f$ is defined to be
$$
\nm{f}_{V} = \sup \br{\int_{[0,1)^n} u(t) \phi(t) \mathrm{d}t : \, \phi \in C^1_c([0,1)^n), \nm{\phi}_{L^\infty([0,1)^n)} \leq 1}
$$
where $C^1_c([0,1)^2)$ denotes the space of continuously differentiable functions of compact support on $[0,1)^n$. The space of bounded variation functions on $[0,1)^n$ is defined to be
$$
BV([0,1)^n) = \br{f \in L^1([0,1)^n): \nm{f}_{V}<\infty}.
$$

\end{definition}

\begin{definition}[The Haar wavelet system]\label{def:Haar}
We  define the Haar transform for functions in $L^2[0,1)$ and the discrete Haar transform for vectors in $\bbC^N$.
Let $\Phi = \chi_{[0,1)}$ and let $\Psi = \chi_{[0,1/2)} - \chi_{[1/2,1)}$, where given an interval $I$, $\chi_I$ is the characteristic function on $I$.  Let
$$
\Psi_{j,k} = \sqrt{2^j}\Psi(2^j\cdot - k), \quad j\in\bbN, k=0,\ldots, 2^j-1.
$$
Then,
$$
\br{\Phi}\cup \br{\Psi_{j,k}: j\in\bbN\cup\br{0}, k=0,\ldots, 2^j-1}
$$
forms an orthonormal basis for $L^2[0,1)$. Order the functions in order of increasing dilation factor such that
\spls{\label{wav_ord}
\br{\varphi_j}_{j\in\bbN} = \br{ \Phi, \Psi, \Psi_{1,0}, \Psi_{1,1}, \ldots, \Psi_{j,0},\Psi_{j,1}\ldots, \Psi_{j,2^j-1},\Psi_{j+1,0},\ldots}
}
The Haar transform on $L^2[0,1)$ is defined by 
$$
\cW:L^2[0,1) \to \ell^2(\bbN), \quad f\mapsto (\ip{f}{\varphi_j})_{j\in\bbN}.
$$
For $N=2^J$ for some $J\in\bbN$, define
$$
\cT:L^2[0,1) \to \bbC^N, \qquad f \mapsto N^{1/2} \left(\int_{(k-1)/N}^{k/N} f(t) \mathrm{d}t\right)_{k=1}^N.
$$
Let ${ H_j} = \cT \varphi_j$. Then $\br{ H_j}_{j=1}^N$ forms an orthonormal basis for the vector space $\bbC^N$. 
The discrete Haar transform on $\bbC^N$ is defined by 
\be{\label{eq:DWT}
 \rW:\bbC^N \to \bbC^N, \quad z \mapsto (\ip{z}{{ H_j}})_{j=1}^N.
 }
\end{definition}

\begin{lemma}\cite[equation (9.53)]{mallat2008wavelet}\label{lem:haar_decay}
Let $f\in BV[0,1)$ and suppose that $\int_0^1 f(t)\mathrm{d}t = 0$. Let $c_j$ be the $j^{th}$ largest entry in magnitude of $\cW f$, the Haar coefficients of $f$. Then, there exists some constant $C$, independent of $f$ such that
$$
\abs{c_j} \leq \frac{C\cdot \nm{f}_{V}}{j^{3/2}}
$$
\end{lemma}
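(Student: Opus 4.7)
The plan is to establish a scale-by-scale pointwise bound on each Haar coefficient of $f$ in terms of the local total variation of $f$, and then run a short counting argument to convert this into the claimed $j^{-3/2}$ decay for the decreasing rearrangement.

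First, I would fix a dyadic interval $I_{j,k} := [k/2^j, (k+1)/2^j)$ on which $\Psi_{j,k}$ is supported. Since $\Psi_{j,k}$ has mean zero, $L^\infty$-norm equal to $2^{j/2}$, and $L^1$-norm equal to $2^{-j/2}$, I can write $\langle f, \Psi_{j,k}\rangle = \int_{I_{j,k}} (f - \alpha)\,\Psi_{j,k}$ for any constant $\alpha$. Choosing $\alpha$ to be a value of $f$ on $I_{j,k}$ and using that $|f(t) - \alpha| \le V_{j,k}$ on $I_{j,k}$, where $V_{j,k}$ denotes the total variation of $f$ restricted to $I_{j,k}$, I obtain the pointwise estimate
\[
|\langle f, \Psi_{j,k}\rangle| \;\le\; 2^{-j/2}\, V_{j,k}.
\]
The two combinatorial ingredients I then extract from the definition of total variation are that the intervals $\{I_{j,k}\}_{k=0}^{2^j-1}$ are disjoint, so $\sum_k V_{j,k} \le \|f\|_V$, and that trivially $V_{j,k} \le \|f\|_V$ for every pair $(j,k)$.

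Next, write $V := \|f\|_V$ and, for any threshold $t > 0$, count the pairs $(j,k)$ with $|\langle f, \Psi_{j,k}\rangle| > t$. The pointwise bound forces $V_{j,k} > 2^{j/2} t$, so the individual bound $V_{j,k} \le V$ restricts $j \le 2\log_2(V/t)$; Markov's inequality applied to $\sum_k V_{j,k} \le V$ gives at most $V/(2^{j/2} t)$ admissible $k$ per level; and trivially there are at most $2^j$ coefficients per level. Splitting at the dyadic scale $j^\star := \lfloor \tfrac{2}{3}\log_2(V/t)\rfloor$, where the last two estimates balance, yields a total count $\lesssim (V/t)^{2/3}$. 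The zero-mean hypothesis kills the scaling coefficient $\langle f, \Phi\rangle$, so this counts all nontrivial Haar coefficients.

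Finally, I would invert this distributional estimate in the standard way: a sequence whose super-level set at height $t$ has cardinality $\lesssim (V/t)^{2/3}$ has decreasing rearrangement $|c_j| \lesssim V/j^{3/2}$, which is the claim. The only delicate point in the argument is the dyadic balancing at the scale $j^\star$, where the per-level bounds $2^j$ and $V/(2^{j/2} t)$ must be combined correctly; everything else is bookkeeping.
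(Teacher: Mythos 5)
Your proof is correct. The paper itself does not prove this lemma --- it is quoted directly from \cite[eq.\ (9.53)]{mallat2008wavelet} --- and your argument (the local bound $\abs{\ip{f}{\Psi_{j,k}}}\leq 2^{-j/2}V_{j,k}$ via the vanishing mean of $\Psi_{j,k}$, the two counting bounds $\sum_k V_{j,k}\leq \nm{f}_V$ and $\#\{k\}\leq 2^j$ per scale, the balance at $j^\star$ giving the weak-$\ell^{2/3}$ estimate $\#\{\abs{c}>t\}\lesssim (\nm{f}_V/t)^{2/3}$, and the inversion to $\abs{c_j}\lesssim \nm{f}_V j^{-3/2}$) is precisely the standard proof given in that reference. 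The only minor point worth making explicit is that "a value of $f$ on $I_{j,k}$" should refer to a good (e.g.\ right-continuous) representative of the $BV$ class so that $\abs{f(t)-\alpha}\leq V_{j,k}$ holds pointwise; alternatively take $\alpha$ to be the mean of $f$ over $I_{j,k}$.
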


\begin{lemma}\label{lem:discrete_haar_decay}
Let $z\in\bbC^N$ be mean zero (i.e. $\sum_{j=1}^N z_j = 0$).
Let $h = \rW z$ be the discrete Haar coefficients of $z$ and let $\pi :\br{1,\ldots, N}\to \br{1,\ldots, N}$ be a permutation such that
$\abs{h_{\pi(j)}} \geq \abs{h_{\pi(j+1)}}$ for $j=1,\ldots, N-1$. Then, for some constant $C$ independent of $z$ and $h$,
$$
\abs{h_{\pi(j)}}\leq \frac{C\cdot \sqrt{N}\cdot \nm{z}_{TV}}{j^{3/2}}.
$$
\end{lemma}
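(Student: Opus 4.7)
The plan is to reduce this discrete Haar decay estimate to its continuous counterpart in Lemma \ref{lem:haar_decay} by lifting $z$ to a piecewise constant function and comparing continuous and discrete Haar coefficients. Concretely, I would introduce the step function
$$
f(t) = \sqrt{N}\sum_{k=1}^N z_k\, \chi_{[(k-1)/N,\, k/N)}(t), \qquad t\in[0,1),
$$
where the scaling $\sqrt{N}$ is chosen precisely so that the $L^2$-inner products with $\varphi_j$ match the discrete inner products with $H_j$. Indeed, since $H_j = \cT\varphi_j$, a direct calculation gives
$$
\langle z,H_j\rangle = \sqrt{N}\sum_{k=1}^N z_k \int_{(k-1)/N}^{k/N}\overline{\varphi_j(t)}\,dt = \langle f,\varphi_j\rangle_{L^2[0,1)}
$$
for every $j\in\bbN$, so the continuous Haar coefficients of $f$ coincide with the discrete coefficients $h_j$ for $j=1,\ldots,N$.

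Next I would observe that because $f$ is constant on each dyadic interval of length $2^{-J} = 1/N$, it is orthogonal to every $\Psi_{j,k}$ with dilation level $j\geq J$ (each such wavelet has mean zero on the interval of constancy of $f$). The first $N$ basis functions in the ordering (\ref{wav_ord}) exhaust $\Phi$ together with levels $j=0,\ldots,J-1$, so the only nonzero Haar coefficients of $f$ are $h_1,\ldots,h_N$. Sorting in decreasing magnitude, the $j$-th largest continuous Haar coefficient of $f$ is exactly $|h_{\pi(j)}|$ for $j\leq N$.

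To apply Lemma \ref{lem:haar_decay}, I still need to verify its hypotheses for $f$. The mean-zero assumption $\sum_k z_k = 0$ gives $\int_0^1 f(t)\,dt = N^{-1/2}\sum_k z_k = 0$. For the $BV$ norm, $f$ is a step function with jumps $\sqrt{N}(z_{k+1}-z_k)$ at the interior points $k/N$, $k=1,\ldots, N-1$, and the definition of $\|\cdot\|_V$ used here only tests against $\phi \in C^1_c([0,1))$, so boundary discontinuities contribute nothing. Hence
$$
\|f\|_V = \sqrt{N}\sum_{k=1}^{N-1}|z_{k+1}-z_k| \leq \sqrt{N}\,\|z\|_{TV},
$$
the inequality being slack only by the nonnegative periodic boundary jump $\sqrt{N}|z_1 - z_N|$ included in $\|z\|_{TV}$. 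Applying Lemma \ref{lem:haar_decay} to $f$ then gives $|h_{\pi(j)}| \leq C\|f\|_V/j^{3/2} \leq C\sqrt{N}\|z\|_{TV}/j^{3/2}$, as required.

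The only genuine point of care is the precise identification of continuous and discrete Haar coefficients together with the vanishing of every finer-scale coefficient; I expect this book-keeping around the operator $\cT$ and the ordering in (\ref{wav_ord}) to be the main thing to nail down. Everything else (mean-zero check, $BV$ bound, invocation of the continuous decay estimate) is routine once the lift is set up.
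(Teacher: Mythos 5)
Your proof is correct and follows essentially the same route as the paper: lift $z$ to the piecewise constant function $f$ scaled by $\sqrt{N}$, identify the discrete Haar coefficients of $z$ with the continuous Haar coefficients of $f$ (noting all finer-scale coefficients vanish), verify $\int_0^1 f = 0$ and $\|f\|_V \leq \sqrt{N}\|z\|_{TV}$, and invoke Lemma \ref{lem:haar_decay}. The only difference is that you spell out the coefficient identification and boundary-term bookkeeping that the paper leaves implicit.
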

\prf{
Define $f\in BV[0,1)$ by
$$
f(t) = \sqrt{N}\cdot z_j, \quad \frac{j-1}{N}\leq t<\frac{j}{N}, \quad j=1,\ldots, N.
$$
Then, $h = W z = (\cW f)_{j=1}^N$. Note also that $(\cW f)_k = 0$ for all $k>N$. Also, $\nm{f}_{V} \leq \sqrt{N}\nm{z}_{TV}$ and $\int_0^1 f(t)\mathrm{d}t =0$. Therefore, we can apply Lemma \ref{lem:haar_decay} to obtain
$$
\abs{h_{\pi(j)}} \leq \abs{(\cW f)_{\pi(j)}} \leq \frac{C \cdot\nm{f}_{V}}{j^{3/2}} = \frac{C \cdot\sqrt{N}\cdot \nm{z}_{TV}}{j^{3/2}}.
$$
}

\subsubsection*{The restricted isometry property}

\begin{definition}\label{def:rip}
Let $\rU\in\bbC^{m\times N}$. Let $s\leq N$ and let $\delta \in (0,1)$, $\rU$ is said to satisfy the restricted isometry property (RIP) of order $s$ and level $\delta$, if
$$
(1-\delta) \nm{z}^2_2 \leq \nm{\rU z}_2^2 \leq (1+\delta)\nm{z}_2^2
$$
for all $s$-sparse vectors $z\in\bbC^N$.
\end{definition}

\defn{\label{def:bos}
Let $T\in\bbC^N$ be endowed with a probability measure $\nu$, then the set of functions $\br{\psi_j:T\to \bbC, j=1,\ldots, N}$ is said to be a bounded orthonormal system  with respect to $\nu$ and bound $K$ if
$\int_T \psi_j(x) \psi_k(x) \mathrm{d}\nu(x) = \delta_{j,k}$ where $\delta_{j,k}$ is the Knonecker delta, and $max_{j=1}^N \nm{\psi_j}_\infty \leq K$. A random sample of an orthonormal system is the vector 
$(\psi_1(x),\ldots, \psi_N(x))$ where $x$ is a random variable drawn in accordance with $\nu$.
}

\begin{theorem}\cite[Theorem 12.32]{foucart2013mathematical}\label{thm:BOS}
Let $\rA\in\bbC^{m\times N}$ be a matrix whose rows are independent random samples of an orthonormal system with bound $K$. For $\epsilon, \delta\in (0,1)$, if
$$
m \geq C \cdot K^2 \delta^{-2} s \max\br{
\log^2(s)\log(K^2\delta^{-2}s\log(N))\log(N), \, \log(\epsilon^{-1})}
$$
for some constant $C$, 
then with
 probability exceeding $1-\epsilon$, $m^{-1/2}A$ satisfies the RIP of order $s$ at level $\delta$.
\end{theorem}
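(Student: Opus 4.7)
\medskip

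\noindent\textbf{Proof proposal.} The plan is to reduce the RIP condition for $\rU := m^{-1/2}\rA$ to a supremum of an empirical process, and then to control this supremum via symmetrization, chaining (Dudley), and a Talagrand-type concentration inequality. Write $X_1,\ldots, X_m$ for the i.i.d.\ samples from $\nu$ used to form the rows of $\rA$, so that $\rA_{\ell, j} = \psi_j(X_\ell)$. Set
\[
D_{s,N} = \br{x\in\bbC^N : \nm{x}_2 = 1, \, \nm{x}_0 \le s},
\]
and observe that the restricted isometry constant of $\rU$ of order $s$ equals
\[
\delta_s(\rU) = \sup_{x\in D_{s,N}} \abs{\nm{\rU x}_2^2 - 1}
= \sup_{x\in D_{s,N}} \abs{\frac{1}{m}\sum_{\ell=1}^m \abs{\ip{\psi(X_\ell)}{x}}^2 - \bbE\abs{\ip{\psi(X_1)}{x}}^2},
\]
where $\psi(X_\ell) = (\psi_1(X_\ell), \ldots, \psi_N(X_\ell))$. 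The goal therefore splits into two subgoals: (a) control $\bbE[\delta_s(\rU)]$ and (b) upgrade this to a high-probability statement.

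For (a), I would first apply the standard symmetrization inequality to replace the centered empirical process by a Rademacher process: introducing i.i.d.\ Rademacher signs $\epsilon_\ell$ independent of the $X_\ell$, one obtains
\[
\bbE[\delta_s(\rU)] \le \frac{2}{m}\, \bbE \sup_{x\in D_{s,N}} \abs{\sum_{\ell=1}^m \epsilon_\ell \abs{\ip{\psi(X_\ell)}{x}}^2}.
\]
Conditionally on the $X_\ell$, this is a Rademacher chaos of order 2, and by Dudley's entropy integral it is bounded by an integral of $\sqrt{\log N(D_{s,N}, d_X, u)}$, where $d_X$ is the (random) pseudo-metric induced on $D_{s,N}$ by the process, essentially of the form $d_X(x,y)^2 = \sum_\ell (\abs{\ip{\psi(X_\ell)}{x}}^2 - \abs{\ip{\psi(X_\ell)}{y}}^2)^2$. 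The key covering step is to bound $d_X$ in terms of a combination of the $\ell_2$-metric and a weighted $\ell_\infty$-type metric, using the $\ell^\infty$ bound $\nm{\psi(X_\ell)}_\infty \le K$ and the identity $|a|^2 - |b|^2 = (a-b)\overline{a} + \overline{(a-b)}b$. Combined with the standard volumetric bound $\log N(D_{s,N}, \nm{\cdot}_2, u) \lesssim s\log(eN/s)/u^2$ for small $u$ and an empirical-process bound on the $\ell_\infty$ covering (which contributes a $K^2 \log N$ scale), the Dudley integral produces the bound
\[
\bbE[\delta_s(\rU)] \lesssim K\sqrt{\frac{s\log^2(s)\log(N)\log(K^2 s \log N/\delta^2)}{m}} \;+\; \text{(lower order)},
\]
so that choosing $m$ at the scale in the hypothesis makes this expectation at most $\delta/2$.

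For (b), I would invoke Talagrand's concentration inequality for suprema of empirical processes of bounded random variables (via a truncation of the rows, using the $K$-bound). Since each summand $\abs{\ip{\psi(X_\ell)}{x}}^2$ is bounded by $K^2 s$ for $x\in D_{s,N}$, one gets a deviation inequality of Bernstein type around the expectation, so that $\bbP(\delta_s(\rU) > \delta) \le \epsilon$ whenever $m \gtrsim K^2 \delta^{-2} s \log(\epsilon^{-1})$ in addition to the bound required in (a). Taking the maximum of the two conditions yields exactly the stated sample complexity.

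The main obstacle is the chaining step: extracting sharp polylogarithmic factors from Dudley's bound requires a two-scale covering of $D_{s,N}$ (the $\ell_2$-ball for moderate scales and a sparsity-aware $\ell_\infty$-type net at small scales) together with a careful use of Maurey's empirical method to cover $\ell_2$-normalized $s$-sparse vectors by short convex combinations of unit coordinates. This is what ultimately produces the $\log^2(s) \log(K^2\delta^{-2}s\log N) \log(N)$ dependence in the theorem, and replicating this without slack is the nontrivial part of the argument.
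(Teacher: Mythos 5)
This theorem is not proved in the paper at all: it is imported verbatim from \cite{foucart2013mathematical} (Theorem 12.32) and used as a black box inside Lemma \ref{lem:RIP}, so there is no internal proof to compare against; the relevant comparison is with the proof in the cited reference. Your outline does follow that proof's architecture --- reduce $\delta_s$ to the supremum of a centered empirical process over $s$-sparse unit vectors, symmetrize, bound the conditional Rademacher supremum by Dudley's entropy integral with a two-scale covering of the sparse sphere (volumetric at small scales, Maurey's empirical method with the bound $K$ at larger scales), and then upgrade the expectation bound to a tail bound via a Talagrand/Bousquet-type concentration inequality, with the boundedness $\absu{\ip{\psi(X_\ell)}{x}}^2 \le K^2 s$ supplying the Bernstein parameters and the $\log(\epsilon^{-1})$ term. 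Two caveats. First, a terminological slip: after symmetrization the quantity $\sum_\ell \epsilon_\ell \absu{\ip{\psi(X_\ell)}{x}}^2$ is linear in the signs, hence a plain Rademacher process conditionally on the samples, not a Rademacher chaos of order $2$; the chaining is done for this process under the $\ell_\infty$-type pseudometric $\max_\ell \absu{\ip{\psi(X_\ell)}{x-y}}$ obtained from the factorization $\abs{a}^2-\abs{b}^2=(a-b)\bar a+\overline{(a-b)}\,b$ together with a Cauchy--Schwarz step that extracts the supremum of $\nmu{\rU x}_2$ itself (leading to the self-bounding inequality for $\bbE\,\delta_s$ that one must then solve). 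Second, the entire quantitative content --- the covering-number estimates and the evaluation of the entropy integral that produce precisely the factor $\log^2(s)\log(K^2\delta^{-2}s\log N)\log(N)$ --- is asserted rather than derived, and your stated volumetric bound $\log N(D_{s,N},\nmu{\cdot}_2,u)\lesssim s\log(eN/s)/u^2$ is not the correct form (the $u^{-2}$ scaling belongs to the Maurey bound, the volumetric bound scales like $s\log(1+2/u)$ plus $s\log(eN/s)$). As a blind reconstruction of a known textbook theorem the plan is sound and matches the reference, but it remains a sketch whose decisive chaining computation is not carried out.
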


\begin{lemma}\label{lem:RIP}

Let $N= 2^J$ for some $J\in\bbN$ and let $\rU = \frac{1}{\sqrt{m}} \rX \rP_\Omega \rV \rW^*$ where $\rV$ is the unitary discrete Fourier transform on $\bbC^N$, and $\rX = \mathrm{diag}\left((p(k))_{k=-N/2+1}^{N/2}\right)$ is such that for $k=-N/2+1,\ldots,N/2$,
$$p(k) = C \sqrt{\log(N)}\max\br{1, \sqrt{\abs{k}}},$$  $C$ is an appropriate constant such that $p^{-2}$ is a probability measure  on $\br{-N/2+1,\ldots,N/2}$. 
 Then, with probability exceeding $1-\epsilon$, $\rU$ satisfies the RIP of order $s$ and level $\delta$ provided that  $\Omega$ consists of $m$ i.i.d. indices chosen in accordance with the probability measure $p^{-2}$ and
$$
m \geq C \cdot \delta^{-2} s \max\br{
\log^2(s)\log(\delta^{-2}s\log^2(N))\log^2(N), \, \log(\epsilon^{-1})}
$$

\end{lemma}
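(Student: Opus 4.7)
The plan is to apply Theorem \ref{thm:BOS} after recognizing the rows of $\sqrt{m}\rU$ as i.i.d.\ samples of a bounded orthonormal system. First, set $T = \br{-N/2+1,\ldots, N/2}$ and equip it with the probability measure $\nu$ defined by $\nu(\br{k}) = p(k)^{-2}$ (which is a probability measure by the choice of the normalising constant in $p$). For each $j=1,\ldots, N$, define
\bes{
\psi_j: T \to \bbC, \qquad \psi_j(k) = p(k) \cdot (\rV\rW^*)_{k,j}.
}
Since $\rV\rW^*$ is unitary,
\bes{
\sum_{k\in T} \psi_j(k)\overline{\psi_l(k)}\, \nu(\br{k}) = \sum_{k\in T} (\rV\rW^*)_{k,j}\overline{(\rV\rW^*)_{k,l}} = \delta_{j,l},
}
so $\br{\psi_j}_{j=1}^N$ is orthonormal with respect to $\nu$. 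By construction of $\rU$, its $i^{th}$ row is $m^{-1/2}(\psi_1(k_i),\ldots, \psi_N(k_i))$ with $k_i \sim \nu$, which places us exactly in the setting of Theorem \ref{thm:BOS}.

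The heart of the argument is bounding the uniform constant $K := \max_j \sup_{k\in T} \abs{\psi_j(k)}$, and I would aim to show $K \lesssim \sqrt{\log N}$. Writing $(\rV\rW^*)_{k,j} = N^{-1/2}\widehat{H}_j(k)$ with $\widehat{H}_j(k) = \sum_{l=1}^N e^{2\pi i k l/N}H_{j,l}$, the case $j=1$ (where $H_1$ is constant) contributes only at $k=0$ and gives $\abs{\psi_1(0)} = p(0) = C\sqrt{\log N}$. For $j$ corresponding to a Haar wavelet at dilation level $r\in\br{0,\ldots, R-1}$, a direct geometric-sum computation for the Fourier transform of $H_j$ yields the estimate
\bes{
\frac{\abs{\widehat{H}_j(k)}}{\sqrt{N}} \lesssim \min\br{\frac{\abs{k}}{2^{3r/2}},\ \frac{2^{r/2}}{\abs{k}}}, \qquad k\neq 0,
}
which is the discrete analogue of the familiar continuous estimate for the Fourier transform of $\Psi_{r,l}$. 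Multiplying by $p(k)\lesssim \sqrt{\log N}\sqrt{\abs{k}}$ and treating the regimes $\abs{k}\leq 2^r$ and $\abs{k}>2^r$ separately, both pieces attain their maximum near $\abs{k}\sim 2^r$ and evaluate to $C\sqrt{\log N}$, uniformly in $r$ and in the translation index $l$; this delivers $K\lesssim \sqrt{\log N}$ as desired.

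Once $K^2 \lesssim \log N$ is in hand, an application of Theorem \ref{thm:BOS} immediately yields the RIP under the stated sample count, after absorbing factors of $\log N$ into the dominant term of the maximum. The main obstacle is precisely the coherence bound in the second step: the weight $p(k) = C\sqrt{\log N}\max\br{1,\sqrt{\abs{k}}}$ must be just tight enough that the product $p(k)\cdot \abs{\widehat{H}_j(k)}/\sqrt{N}$ does not grow with the dilation level $r$, which requires the geometric-sum estimate and the subsequent case split to be carried out carefully so that the \emph{single} weight $p$ works uniformly across all wavelets in the basis, rather than necessitating a scale-adapted weighting.
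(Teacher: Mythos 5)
Your proposal is correct and follows essentially the same route as the paper: recast the weighted Fourier--Haar system as a bounded orthonormal system with respect to the measure $p^{-2}$, bound the BOS constant by $K\lesssim\sqrt{\log N}$ via the Fourier--Haar coherence estimate $\abs{\ip{\psi_k}{\varphi_j}}\lesssim 1/\sqrt{\abs{k}}$, and invoke Theorem \ref{thm:BOS}. The only difference is that you derive the coherence bound directly from the geometric-sum/cancellation structure of the Haar wavelets (your two-regime estimate indeed reduces to $\lesssim\abs{k}^{-1/2}$ uniformly in the scale $r$), whereas the paper simply cites \cite[Section VI, Corollary 2]{ward2013stable} for it.
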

\prf{
This result was essentially derived in \cite{ward2013stable}, and the proof is virtually identical to the proof of  Theorem 4 in Section VI of \cite{ward2013stable}. However,  this result was not explicitly stated in \cite{ward2013stable} as their results did not consider the case of one-dimensional total variation. Furthermore, we will use Theorem \ref{thm:BOS}, which is a slightly more general version of the result used in \cite{ward2013stable} and this will allow us to state the probability that the RIP is satisfied in terms of $\epsilon$.
we will outline only the key arguments about why this result is true. 
\begin{itemize}
\item First, suppose that we are given two orthonormal bases $\br{\varphi_j}_{j=1}^N$ and $\br{\psi_j}_{j=1}^N$ and suppose that $\max_{k=1}^N \abs{\ip{\varphi_k}{\psi_j}} \leq \kappa_j$. Define a probability measure on $\br{1,\ldots, N}$ by $\nu(j) = \kappa_j^2/\nm{\kappa}^2_2$.
\item Let $d_j = \nm{\kappa}_2/\kappa_j$, let $\Psi\in\bbC^{N\times N}$ be the matrix whose $j^{th}$ row is the vector $\psi_j$ and let $\Phi\in\bbC^{N\times N}$ be the matrix whose $j^{th}$ row is the vector $\varphi_j$. Then,  by letting $\eta_l (d_j \ip{\varphi_l}{\psi_j})_{j=1}^N$ for each $l=1,\ldots, N$, 
one can apply Definition \ref{def:bos} to verify that $\br{\eta_l}_{l=1}^N$ is a bounded orthonormal system with respect to $\nu$ and bound $\nm{\kappa}_2$.
\item Then, by Theorem \ref{thm:BOS}, with probability at least $1-\epsilon$, the matrix $\rP_\Omega \Psi\Phi^*$, where $\Omega$ consists of $m$ i.i.d. indices from $\br{1,\ldots, N}$ chosen in accordance with $\nu$, satisfies the RIP of order $s$ and level $\delta$ provided that $$
m \geq C \cdot \nm{\kappa}_2^2 \delta^{-2} s \max\br{
\log^2(s)\log(\nm{\kappa}_2^2\delta^{-2}s\log(N))\log(N), \, \log(\epsilon^{-1})}.
$$
So, to prove this result, we need only derive bounds on $\kappa_j$ and $\nm{\kappa}_2$ for the case of the Fourier and Haar bases, where $\varphi_j = H_j$, with $H_j$ as defined in Definition \ref{def:Haar}, and $\psi_k = N^{-1/2}( e^{2\pi i k j/N})_{j=1}^{N}$, and we will index the system of $\Psi$ with $k=-N/2,\ldots, N/2$.
 By \cite[Section VI, Corollary 2]{ward2013stable}, $\abs{\ip{\psi_k}{\varphi_j}} \leq 3\sqrt{2\pi}/\sqrt{\abs{k}}$ for $k\neq 0$ and one can check that $\abs{\ip{\psi_k}{\varphi_j}} =1$ for $k=0$. Thus, in the Fourier with Haar case, we can let $\kappa_j = \min\br{ 3\sqrt{2\pi}/\sqrt{\abs{j}}, \, 1}$, and 
 $$
 \nm{\kappa}_2^2 = 1+2\sum_{j=1}^{N/2}\frac{18\pi }{j}\leq 36\pi +1+\int_{1}^{N/2} \frac{18\pi}{x}\mathrm{d}x\leq 36(1+\pi\log(N)).
 $$

\end{itemize}

}

\subsubsection*{Proof of Proposition \ref{prop:gradtosignal_stability}}
We will first show how a combination of the RIP and the relationship between the decay of Haar wavelet coefficients and the total variation of a signal can lead to stronger control (in comparison with the standard Poincar\'{e} result) of $\nm{\cdot}_2$ by $\nm{\cdot}_{TV}$, then the proof of Proposition \ref{prop:gradtosignal_stability} will be completed  by applying  specific bounds from Lemma \ref{lem:RIP}.
\begin{lemma}\label{lem:strong_sobolev}
Let $N=2^J$ for some $J\in\bbN$ and let $\rU = \rV \rW^*$ where $\rW$ is the discrete Haar transform on $\bbC^N$ defined in (\ref{eq:DWT}) and $\rV\in\bbC^{m\times N}$. Let $z\in\bbC^N$. Suppose that $\rU$ satisfies the RIP of order $2r+1$ and level $\delta$, and suppose that $\nm{\rV z}_2 \leq \epsilon$. Then,
$$
\nm{z}_2 \lesssim \frac{\epsilon}{1-\delta} +   \frac{1+\delta}{1-\delta}\cdot \frac{ \nm{z}_{TV}\cdot\sqrt{N}}{r}.
$$
\end{lemma}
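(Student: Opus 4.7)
Pass to the Haar domain by setting $h = \rW z$. Since $\rW$ is unitary, $\nm{z}_2 = \nm{h}_2$ and $\rV z = \rU h$, so $\nm{\rU h}_2 \leq \epsilon$. The plan is to split $h$ into a ``head'' which is sparse enough for the RIP of $\rU$ to apply, and a ``tail'' which is controlled by the decay of Haar coefficients of a bounded variation vector (Lemma \ref{lem:discrete_haar_decay}). This is the one-dimensional analogue of the argument used in \cite{needell2013stable}.

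First I would peel off the constant component of $z$. Writing $z = \bar z\, \mathbf{1} + \tilde z$ with $\bar z = N^{-1}\sum_j z_j$, one checks that $(\rW z)_1 = \sqrt{N}\,\bar z$ and $(\rW z)_j = (\rW\tilde z)_j$ for $j\geq 2$, while $\nm{\tilde z}_{TV} = \nm{z}_{TV}$. Sort the entries $\{(\rW\tilde z)_j : j\geq 2\}$ in decreasing magnitude and partition them into consecutive blocks $T_1, T_2, \ldots$ of size $r$ (the last block possibly smaller). Define $S_0 = \{1\}\cup T_1$ and $S_j = T_{j+1}$ for $j\geq 1$; then each $|S_j| \leq r+1 \leq 2r+1$, so the RIP at order $2r+1$ applies to every block $h_{S_j}$.

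Next, Lemma \ref{lem:discrete_haar_decay} applied to the mean-zero vector $\tilde z$ controls the tail: for $k\in T_{j+1}$ with $j\geq 1$, the rank of $(\rW\tilde z)_k$ among the sorted entries is at least $jr+1$, and hence
\begin{equation*}
\nm{h_{S_j}}_2 \lesssim \frac{\sqrt{N}\,\nm{z}_{TV}}{j^{3/2}\,r},\qquad j\geq 1.
\end{equation*}
Summability of $\sum_{j\geq 1} j^{-3/2}$ gives $\sum_{j\geq 1}\nm{h_{S_j}}_2 \lesssim \sqrt{N}\,\nm{z}_{TV}/r$. For the head, writing $\rU h_{S_0} = \rU h - \sum_{j\geq 1}\rU h_{S_j}$ and applying the RIP to each sparse block yields
\begin{equation*}
\sqrt{1-\delta}\,\nm{h_{S_0}}_2 \leq \nm{\rU h}_2 + \sqrt{1+\delta}\sum_{j\geq 1}\nm{h_{S_j}}_2 \lesssim \epsilon + \sqrt{1+\delta}\cdot\frac{\sqrt{N}\,\nm{z}_{TV}}{r}.
\end{equation*}
Combining $\nm{z}_2 = \nm{h}_2 \leq \nm{h_{S_0}}_2 + \sum_{j\geq 1}\nm{h_{S_j}}_2$ and using the crude bounds $1/\sqrt{1-\delta} \leq 1/(1-\delta)$ and $\sqrt{(1+\delta)/(1-\delta)} \leq (1+\delta)/(1-\delta)$ produces the claimed inequality.

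The only mild subtlety is that Lemma \ref{lem:discrete_haar_decay} requires mean-zero input, whereas the scaling Haar coefficient $\sqrt N\,\bar z$ of a generic $z$ is not controlled by $\nm{z}_{TV}$; absorbing this single coefficient into the head $S_0$ sidesteps this and lets the decay estimate be applied uniformly across all tail blocks. Everything else is routine accounting with the triangle inequality and RIP.
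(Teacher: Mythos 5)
Your argument is correct and follows essentially the same route as the paper's: pass to the Haar domain, peel off the scaling coefficient of the mean part, apply Lemma \ref{lem:discrete_haar_decay} to the mean-zero remainder, and run a head/tail RIP estimate over blocks of size $r$ sorted by magnitude. The only (harmless) difference is bookkeeping: you bound each tail block's $\ell^2$ norm directly from the pointwise decay and apply the lower RIP bound to the $(r+1)$-sparse head alone, whereas the paper first derives an $\ell^1$ tail bound, uses the shifting inequality $\sum_{j\geq 2}\nmu{\rP_{\Delta_j}c^H}_2 \leq r^{-1/2}\nmu{\rP_{\Delta_0}^\perp c^H}_1$, and applies the lower RIP bound to the $(2r+1)$-sparse vector $c^L + \rP_{\Delta_0}c^H + \rP_{\Delta_1}c^H$ (which is why the hypothesis asks for order $2r+1$).
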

\begin{proof}
Let $c  = \rW z$. Let $c^L, c^H\in\bbC^N$ be such that $c^L = \rP_{\br{1}}c$ and $c^H = \rP_{\br{2,\ldots, N}} c$. Note that $c^L$ has only one non-zero entry, which is the scaling coefficient $\ip{z}{H_0} = N^{-1/2}\sum_j z_j$. Decompose $z$ as $z = z^L + z^H$, where $z^L$ is the vector whose entries are all of the constant value $N^{-1} \sum_{j} z_j$. Then $z^H$ is mean-zero, $\rW z^H=c^H$ and $\rW z^L=c^L$. Note also that $\nm{z^H}_{TV} = \nm{z}_{TV}$. We will apply Lemma \ref{lem:discrete_haar_decay} to $z^H$.

Let $\pi :\br{1,\ldots, N}\to \br{1,\ldots, N}$ be a permutation such that
$\absu{c^H_{\pi(j)}} \geq \absu{c^H_{\pi(j+1)}}$ for $j=1,\ldots, N-1$.
Let $\Delta_0$ index the largest $r$ entries of $c^H$ in magnitude, let $\Delta_1$ index the next largest $r$ entries of $c^H$ in magnitude, and so on.
Then, by applying Lemma \ref{lem:discrete_haar_decay},
\spl{\label{eq:cone1}
\nm{\rP_{\Delta_0}^\perp c^H}_1 = \sum_{j=r+1}^N \abs{c^H_{\pi(j)}}
\leq C \nm{z}_{TV} \sqrt{N} \sum_{j=r+1}^N \frac{1}{j^{3/2}}
\leq  \frac{2 C \nm{z}_{TV}\sqrt{N}}{\sqrt{r}},
}
where $C$ is the constant from Lemma \ref{lem:discrete_haar_decay}. Similarly, we can also obtain an $\ell^2$ bound,
\spl{\label{eq:cone2}
\nm{\rP_{\Delta_0}^\perp c^H}_2 = \sqrt{\sum_{j=r+1}^{N}\abs{c_{\pi(j)}}^2}
\leq  C \nm{z}_{TV} \sqrt{N \sum_{j=r+1}^N \frac{1}{j^3}} = \frac{C \nm{z}_{TV} \sqrt{N} }{\sqrt{2}\cdot r}.
}
Recalling the assumption that $\epsilon \geq \nm{\rV z}_2$, we have that
\spls{
\epsilon \geq \nm{\rV z}_2 &= \nm{\rV \rW^* \rW z}_2 
= \nm{\rU (c^L + \rP_{\Delta_0}c^H + \rP_{\Delta_0}^\perp c^H)}_2\\
&\geq \nm{\rU(c^L + \rP_{\Delta_0}c^H + \rP_{\Delta_1}c^H)}_2 -
\sum_{j\geq 2} \nm{\rU \rP_{\Delta_j}c^H}_2\\
&\geq (1-\delta)\nm{c^L + \rP_{\Delta_0}c^H + \rP_{\Delta_1}c^H}_2 -
(1+\delta)\sum_{j\geq 2} \nm{ \rP_{\Delta_j}c^H}_2\\
&\geq (1-\delta)\nm{c^L + \rP_{\Delta_0}c^H}_2 -
\frac{(1+\delta)}{\sqrt{r}}\sum_{j\geq 1} \nm{ \rP_{\Delta_j}c^H}_1\\
&= (1-\delta)\nm{c^L + \rP_{\Delta_0}c^H}_2 -
\frac{(1+\delta)}{\sqrt{r}}\nm{ \rP_{\Delta_0}^\perp c^H}_1.
}
Rearranging and combining with (\ref{eq:cone1}) yields
\eas{
\nm{c^L + \rP_{\Delta_0}c^H}_2 \leq \frac{\epsilon}{(1-\delta)} +\frac{(1+\delta)\cdot \nm{ \rP_{\Delta_0}^\perp c^H}_1}{(1-\delta)\cdot \sqrt{r}}
& \lesssim \frac{\epsilon}{(1-\delta)} +\frac{(1+\delta) \nm{z}_{TV}\sqrt{N}}{(1-\delta)\cdot r} .
}
Finally, combining with (\ref{eq:cone2}) yields
\eas{
\nm{z}_2 = \nm{c^L + \rP_{\Delta_0}c^H}_2 + \nm{\rP_{\Delta_0}^\perp c^H}_2
\lesssim \frac{\epsilon}{(1-\delta)} +\frac{(1+\delta) \nm{z}_{TV}\sqrt{N}}{(1-\delta)\cdot r} .
}
\end{proof}

\begin{proof}[Proof of Proposition \ref{prop:gradtosignal_stability}]
 Let $\tilde \rA = N^{-1/2} \rA$ and note that it is unitary. We may assume that 
 $$
 \frac{s}{\log^2(s)\log(N)\log(s\log(N))}\geq 1
 $$
 since the result of Proposition \ref{prop:gradtosignal_stability} follows from Lemma \ref{lem:stab_grad_stab_sig} otherwise. 
Let
  $\rX := \mathrm{diag}\left((p(k))_{k=-N/2+1}^{N/2}\right)$ such that for $k=-N/2+1,\ldots,N/2$,
$$p(k) = C \sqrt{\log(N)}\max\br{1, \sqrt{\abs{k}}},$$ where $C$ is an appropriate constant such that $p^{-2}$ is a probability measure  on $\br{-N/2+1,\ldots,N/2}$.  Then, recall from Lemma \ref{lem:RIP} that by our choice of $m$, $\rU = \frac{1}{\sqrt{m}} \rX \rP_\Omega \tilde \rA \rW^*$  satisfies the RIP of  level $1/2$ and order 
$$
r \leq \frac{s}{\log^2(s)\log(N)\log(s\log(N))}.
$$ Also, since $\nm{p}_\infty \lesssim \sqrt{N\log(N)}$, it follows that
$$
\nm{\frac{1}{\sqrt{m}} \rX \rP_\Omega \tilde \rA z}_2 \lesssim \sqrt{\log(N)}\cdot 
\nm{\frac{1}{\sqrt{m}}\rP_\Omega  \rA z}_2 \leq \sqrt{\log(N)}\cdot \gamma.
$$
So, by Lemma \ref{lem:strong_sobolev},
$$
\nm{z}_2 \lesssim \sqrt{\log(N)}\cdot \gamma +  \frac{ \nm{z}_{TV}\cdot\sqrt{N}\cdot \log^2(s)\log(N)\log(s\log(N))}{s}.
$$
\end{proof}

\subsubsection*{The two dimensional case}
We have the following strong Sobolev inequality in the case of two dimensional vectors. The proof of this result is similar to the one dimensional case and directly applies the results of \cite{needell2013stable}. 
\begin{proposition}\label{prop:gradtosignal_stability_2d}
Let $N = 2^R$ for some $R\in\bbN$.
Let $\rA$ be the (non-unitary) discrete Fourier transform on $\bbC^N$ as defined in (\ref{def:2d_DFT}). Suppose that $z$ is such that $\nm{\rP_\Omega \rA z}_2 \leq \sqrt{m}\gamma$, where $\Omega = \br{k_1,\ldots, k_m}\subset \br{-N/2+1,\ldots, N/2}^2$ is chosen i.i.d. such that  for each $j=1,\ldots, m$, 
$$
\bbP(k_j = (n, m)) = p(n, m)^{-1}, \quad p(n,m) = C \log(N)\max\br{1,\abs{n}^2 + \abs{m}^2}
$$
for some appropriate $C>0$ 
and $m\gtrsim s \cdot (\log(N) + \log(\epsilon^{-1}))$. Then with probability exceeding $1-\epsilon$, 
$$
\nm{z}_2 \lesssim \sqrt{\log(N)}\cdot \gamma +  \frac{ \nm{z}_{TV}\cdot\log(N^2/s)\cdot \log^2(s)\log(N)(\log(s)+\log\log(N))}{\sqrt{s}}.
$$
\end{proposition}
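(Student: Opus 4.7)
The plan is to mirror the one-dimensional proof of Proposition \ref{prop:gradtosignal_stability}, substituting the two-dimensional analogues of each of the three ingredients: a Haar decay estimate, a Fourier--Haar RIP, and a ``strong Sobolev'' inequality. The paper itself remarks that all the necessary 2D components are essentially available in \cite{needell2013stable, ward2013stable}, so the task is really to assemble them in the right order and with the correct normalization to deliver the claimed $\sqrt{s}$-type decay together with the stated $\sqrt{\log N}\cdot\gamma$ prefactor.

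First I would establish the 2D Haar decay. Given mean-zero $z\in\bbC^{N\times N}$, associate the piecewise-constant function $f(x,y)=N\cdot z_{j,k}$ on the dyadic square $[(j-1)/N,j/N)\times[(k-1)/N,k/N)$, so that $\nm{f}_V\lesssim \nm{z}_{TV}$ by the coarea formula, the discrete 2D Haar coefficients of $z$ coincide with the continuous Haar coefficients of $f$, and $(\cW f)_k=0$ for $k>N^2$. The analogue of Lemma \ref{lem:discrete_haar_decay} (see \cite[Proposition~4]{needell2013stable}) then gives, for the $j$-th largest 2D Haar coefficient in magnitude,
\[
|h_{\pi(j)}|\lesssim \frac{\nm{z}_{TV}}{j},
\]
where the decay $1/j$ (rather than $1/j^{3/2}$) is the essential distinction from the 1D case. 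Second, I would invoke the 2D analogue of Lemma \ref{lem:RIP}: using the coherence bound $|\ip{H_\ell}{\psi_{n,m}}|\lesssim 1/\max\{1,\sqrt{n^2+m^2}\}$, which yields $\nm{\kappa}_2^2\lesssim \log(N)$, Theorem \ref{thm:BOS} (exactly as in \cite[Theorem~4]{ward2013stable}) shows that $m^{-1/2}\rX\rP_\Omega\tilde\rA\rW^*$ satisfies the RIP of order $r$ at level $1/2$ with probability $\geq 1-\epsilon$, where $\tilde\rA=N^{-1}\rA$ is the unitary 2D DFT, $\rX$ is the diagonal matrix with entries $\sqrt{p(n,m)}$ (so $\rX^{-2}$ is the sampling distribution), and
\[
m \gtrsim r\log^2(r)\log(r\log N)\log^2(N)+\log(\epsilon^{-1}).
\]
Choosing $r\asymp s/(\log^2(s)\log(N)\log(s\log N))$ is compatible with the hypothesized $m\gtrsim s(\log N+\log\epsilon^{-1})$.

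With these two pieces in hand I would run the cone decomposition of Lemma \ref{lem:strong_sobolev}. Write $\rW z=c^L+c^H$ with $c^L$ the single scaling coefficient and $c^H$ mean zero. Sort $|c^H|$ decreasingly and partition the indices into successive blocks $\Delta_0,\Delta_1,\ldots$ of size $r$. The 2D Haar decay yields
\[
\nm{\rP_{\Delta_0}^\perp c^H}_1\lesssim \nm{z}_{TV}\log\!\left(N^2/r\right),\qquad \nm{\rP_{\Delta_0}^\perp c^H}_2\lesssim \nm{z}_{TV}/\sqrt{r}.
\]
Since $\nm{p}_\infty\lesssim N^2\log(N)$, the normalization gives $\nm{\rX\rP_\Omega\tilde\rA z}_2\lesssim \sqrt{\log N}\cdot\nm{\rP_\Omega\rA z}_2$. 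Applying the RIP to $c^L+\rP_{\Delta_0}c^H+\rP_{\Delta_1}c^H$, together with the standard estimate $\nm{\rP_{\Delta_j}c^H}_2\leq \nm{\rP_{\Delta_{j-1}}c^H}_1/\sqrt{r}$ for $j\geq 2$, produces
\[
\nm{c^L+\rP_{\Delta_0}c^H}_2\lesssim \sqrt{\log N}\cdot\gamma+\frac{\nm{z}_{TV}\log(N^2/r)}{\sqrt{r}}.
\]
Adding the $\ell^2$ tail bound, using Parseval ($\nm{z}_2=\nm{\rW z}_2$), substituting the chosen $r$, and noting $\log(N^2/r)\lesssim \log(N^2/s)$ yields the announced estimate.

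The main obstacle is the degradation in the Haar decay: in 2D it falls only as $1/j$, which is what forces the $\ell^1$ tail to contribute a $\log(N^2/s)$ factor and, crucially, leaves $\sqrt{s}$ rather than $s$ in the denominator. A secondary technical point, easy to get wrong, is the bookkeeping of the 2D DFT normalization (the non-unitary $\rA$ scales by $N$ rather than $\sqrt{N}$) so that the diagonal preconditioner $\rX$ aligned with the sampling density $p^{-1}$ absorbs exactly the right power of $N$ and $\gamma$ enters the final bound with a clean $\sqrt{\log N}$ prefactor.
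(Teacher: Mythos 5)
Your proposal is correct and follows essentially the same route as the paper: the paper's proof of Proposition \ref{prop:gradtosignal_stability_2d} likewise combines the bivariate Haar decay $|c_j|\lesssim \nm{z}_{TV}/j$ (Lemma \ref{lem:discrete_haar_decay_2d}), the preconditioned Fourier--Haar RIP (Lemma \ref{lem:RIP_2d}), and the cone-decomposition ``strong Sobolev'' inequality (Lemma \ref{lem:strong_sobolev_2d}), with the same choice of $r$ and the same $\nm{\tilde p}_\infty\lesssim N\sqrt{\log N}$ normalization argument yielding the $\sqrt{\log N}\cdot\gamma$ term. The only difference is that you sketch the proofs of the three ingredient lemmas, which the paper imports directly from \cite{needell2013stable, ward2013stable}.
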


The key difference between the proof of this proposition and its one dimensional counterpart in Proposition \ref{prop:gradtosignal_stability} is the relationship between Haar coefficients and bounded variation norms for bivariate functions. We will first define the bivariate Haar system, then present the results from \cite{needell2013stable, ward2013stable} which will allow us to deduce Proposition \ref{prop:gradtosignal_stability_2d}.

\begin{definition}[The bivariate Haar system]
Let $$\tilde \Phi(x,y) =\Phi(x) \Phi(y), \quad \tilde\Psi^v(x,y) = \Phi(x) \Psi(y), \quad \tilde \Psi^h(x,y) = \Psi(x)\Phi(y), \quad \tilde\Psi^d(x,y) = \Psi(x)\Psi(y).$$ Also, for $j\in\bbN$ and $k\in\br{0,\ldots, 2^j-1}^2$, let $\tilde{\Psi}_{j,k}^e = 2^j \tilde \Psi^e(2^j \cdot -k)$ for each $e\in\br{v,d,h}$. Then,
$$
\br{\tilde \Phi}\cup \br{\tilde \Psi^e_{j,k}:  e\in\br{v,d,h}, \, j\in\bbN, \, k\in\br{0,\ldots, 2^j-1}^2}
$$
forms an orthonormal basis for $L^2\left([0,1)^2\right)$. Let $\br{\tilde \varphi_j}_{j\in\bbN^2}$ denote these basis elements after ordering in (any) increasing order of dilation factor, such that
$$
\br{\tilde \varphi_{j_1,j_2} : j_1, j_2=1, \ldots, 2^{J}}= \br{\tilde \Phi}\cup \br{\tilde \Psi^e_{j,k}:  e\in\br{v,d,h}, \, j=0,\ldots, J-1, \, k\in\br{0,\ldots, 2^j-1}^2}
$$
Given $N=2^J$ for some $J\in\bbN$, define
$$
\tilde \cT:L^2\left([0,1)^2\right) \to \bbC^{N\times N}, \qquad f\mapsto
N \left(\int_{(k_1-1)/N}^{k_1/N}\int_{(k_2-1)/N}^{k_2/N} f(t_1,t_2) \mathrm{d}t_1\mathrm{d}t_2\right)_{k_1,k_2 = 1}^N.
$$
Then, let $\tilde H_{j_1,j_2} := \tilde \cT \varphi_{j_1,j_2}$, and $\br{\tilde H_{j_1,j_2}: j_1,j_2=1, \ldots, N}$ forms an orthonormal basis for the vector space $\bbC^{N\times N}$. Let the two dimensional discrete Haar transform be denoted by
$$
\tilde \rW: \bbC^{N\times N} \to \bbC^{N\times N} ,\qquad z \mapsto \left(\ip{z}{\tilde H_{j_1,j_2}}\right)_{j_1,j_2=1}^{N}.
$$
\end{definition}

All the lemmas used to deduce Proposition \ref{prop:gradtosignal_stability} can be directly extended to two dimensions, except for Lemmas \ref{lem:haar_decay} and \ref{lem:discrete_haar_decay}. The following lemma reveals the relationship between bivariate Haar coefficients and the total variation norm which will be used instead of the latter lemma. Lemma \ref{lem:discrete_haar_decay_2d} is the finite dimensional version of a result on bounded variation functions from \cite{petrushev1999nonlinear}. Although this paper deals only with the one and two dimensional case, analogous statements to the following lemma also exist for higher dimensions (see \cite{needell2013near}), so one can extend the results of this section to multidimensional cases.

\begin{lemma}\cite{needell2013stable}\label{lem:discrete_haar_decay_2d}
Let $z\in\bbC^{N\times N}$ be mean zero (i.e. $\sum_{k,j=1}^N z_{k,j} = 0$).
Let $c_j$ be the $j^{th}$ largest entry in magnitude of $\tilde \rW z$,  the discrete Haar coefficients of $z$. Then, for some constant $C$ independent of $z$ and $h$,
$$
\abs{c_j}\leq \frac{C\cdot  \nm{z}_{TV}}{j}.
$$
\end{lemma}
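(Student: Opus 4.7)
The plan is to mimic the one-dimensional proof (Lemma \ref{lem:discrete_haar_decay}) by passing from the discrete object $z$ to a piecewise constant bivariate function $f\in BV([0,1)^2)$ whose continuous Haar coefficients coincide with the discrete ones, then invoking the bivariate analogue of Lemma \ref{lem:haar_decay}, which is the classical result of Petrushev (see \cite{petrushev1999nonlinear}) stating that for $f\in BV([0,1)^2)$ with zero mean, the reordered bivariate Haar coefficients $\{c_j^\star\}$ of $f$ satisfy $|c_j^\star|\leq C\,\nm{f}_V/j$. Note that the decay exponent drops from $3/2$ (one dimensional) to $1$ in two dimensions, which is precisely why the stated bound has $1/j$ rather than $1/j^{3/2}$.

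First I would construct $f\in L^2([0,1)^2)$ by setting
\bes{
f(x,y)=N\cdot z_{k_1,k_2}\quad\text{for }(x,y)\in\left[\tfrac{k_1-1}{N},\tfrac{k_1}{N}\right)\times\left[\tfrac{k_2-1}{N},\tfrac{k_2}{N}\right),\qquad k_1,k_2=1,\ldots,N.
}
The scaling factor $N$ is chosen so that $\tilde\cT f=z$, which follows directly from the definition of $\tilde\cT$. Consequently, for any dyadic scale $j<J$ and location index $k$, one has $\ip{z}{\tilde H_{j,k}}=\ip{\tilde\cT f}{\tilde\cT \tilde\varphi_{j,k}}=\ip{f}{\tilde\varphi_{j,k}}_{L^2}$, since the system $\{\tilde\varphi_{j,k}\}$ at scales $\leq J-1$ is orthonormal and $f$ is constant on each cell of side $1/N=2^{-J}$ (so all higher-scale continuous Haar coefficients of $f$ vanish). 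Thus the multiset of nonzero continuous Haar coefficients of $f$ coincides with the multiset of discrete Haar coefficients $\tilde\rW z$.

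Next I would control $\nm{f}_V$ by $\nm{z}_{TV}$. Since $f$ is piecewise constant, its distributional gradient is a sum of Dirac measures supported on the grid lines, and a direct computation (integrating the $y$-jump of $f$ across each vertical grid segment, and similarly for horizontal segments) yields
\bes{
\nm{f}_V \;=\;\sum_{k_1,k_2}\bigl|z_{k_1+1,k_2}-z_{k_1,k_2}\bigr|\;+\;\sum_{k_1,k_2}\bigl|z_{k_1,k_2+1}-z_{k_1,k_2}\bigr|\;=\;\nm{\rD_1 z}_1+\nm{\rD_2 z}_1.
}
Using the elementary inequality $|a|+|b|\leq\sqrt{2}\sqrt{|a|^2+|b|^2}$ pointwise in $(k_1,k_2)$ and summing gives $\nm{f}_V\leq\sqrt{2}\,\nm{\rD z}_1=\sqrt{2}\,\nm{z}_{TV}$. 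The mean-zero hypothesis on $z$ transfers to $f$, since $\int_{[0,1)^2}f=N^{-1}\sum_{k_1,k_2}z_{k_1,k_2}=0$.

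Finally, I would apply Petrushev's bivariate Haar decay estimate to $f$: the $j$-th largest (in magnitude) continuous Haar coefficient of $f$ is bounded by $C\,\nm{f}_V/j$. Combining with the identification of continuous and discrete coefficients and the bound $\nm{f}_V\leq\sqrt{2}\,\nm{z}_{TV}$ yields the claimed inequality with constant $\sqrt{2}\,C$. The only real subtlety is verifying the \emph{equality} (not merely inequality) of the continuous and discrete Haar coefficients, which requires checking that cross-scale and cross-wavelet inner products at scales $\geq J$ vanish; this is immediate since $f$ is constant on each subsquare of the dyadic grid at level $J$, and higher-scale wavelets have mean zero on every such subsquare.
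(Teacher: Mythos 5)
Your argument is essentially correct, but it takes a genuinely different route from the paper's, because the paper does not actually prove this lemma: it imports the discrete two-dimensional statement wholesale from \cite{needell2013stable} and adds only the one-line remark that the Neumann-boundary isotropic TV used there is dominated by the periodic TV used here. What you do instead is run the two-dimensional analogue of the paper's own one-dimensional transfer argument (Lemma \ref{lem:discrete_haar_decay}): embed $z$ as a piecewise constant $f\in BV([0,1)^2)$ with $\tilde\cT f=z$, identify the nonzero continuous bivariate Haar coefficients of $f$ with the entries of $\tilde\rW z$ (valid because $\tilde\cT$ is unitary from the span of the scale-$\le J-1$ Haar functions onto $\bbC^{N\times N}$, and every finer-scale wavelet has mean zero on each cell on which $f$ is constant, so those coefficients vanish), bound $\nm{f}_V$ by the discrete TV, and invoke the continuous weak-$\ell^1$ decay estimate of \cite{petrushev1999nonlinear}, correctly noting the drop from exponent $3/2$ to $1$ in dimension two. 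This is precisely the ``finite-dimensional version'' relationship the paper asserts in the sentence preceding the lemma; your route buys a self-contained derivation from the continuous theory, while the paper's citation buys brevity (and \cite{needell2013stable} in effect redo the continuous argument directly in the discrete setting, so both routes rest on the same mechanism).

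Two caveats. First, your displayed formula for $\nm{f}_V$ should be an inequality rather than an equality: the seminorm of Definition \ref{def:BV} is taken over test functions compactly supported in $[0,1)^2$ and therefore does not see the wrap-around jumps that the periodic operators $\rD_1,\rD_2$ include; one gets $\nm{f}_V\le\nm{\rD_1 z}_1+\nm{\rD_2 z}_1$, which is the direction you need. Second, the step $\nm{\rD_1 z}_1+\nm{\rD_2 z}_1\le\sqrt2\,\nm{\rD z}_1$ implicitly uses $\abs{(\rD_1 z)_{j,k}}^2+\abs{(\rD_2 z)_{j,k}}^2=\abs{(\rD_1 z+i\rD_2 z)_{j,k}}^2$, which holds entrywise for real-valued $z$ but fails for general complex $z$ (take $(\rD_1 z)_{j,k}=1$ and $(\rD_2 z)_{j,k}=i$, so that $(\rD z)_{j,k}=0$); indeed with $\rD=\rD_1+i\rD_2$ the quantity $\nm{\cdot}_{TV}$ can vanish on nonconstant complex arrays, so the complex case must be handled by splitting into real and imaginary parts, and this quirk equally affects the paper's own justification via $\nm{z}'_{TV}\le\nm{z}_{TV}$. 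Neither point undermines your approach for real-valued signals, which is the case of interest.
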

\begin{remark}
Note that \cite{needell2013stable} actually proved that
$
\abs{c_j}\leq C\cdot  \nm{z}_{TV}'/j,$
where $\nm{\cdot }_{TV}'$ is the isotropic total variation norm with Neumann boundary conditions. However, Lemma \ref{lem:discrete_haar_decay_2d} holds since $\nm{z}'_{TV} \leq \nm{z}_{TV}$ for all $z\in\bbC^{N\times N}$.
\end{remark}

Now, applying this bound on the bivariate Haar coefficients (just as in the proof of Lemma \ref{lem:strong_sobolev}) yields the following result.
\begin{lemma}\cite{needell2013stable}\label{lem:strong_sobolev_2d}
Let $N=2^J$ for some $J\in\bbN$ and let $\rU = \rV \rW^*$ where $\rW$ is the discrete Haar transform on $\bbC^{N\times N}$ and $\rV: \bbC^{N\times N}\to \bbC^m$. Let $z\in\bbC^{N\times N}$. Suppose that $\rU$ satisfies the RIP of order $2r+1$ and level $\delta$, and suppose that $\nm{\rV z}_2 \leq \epsilon$. Then,
$$
\nm{z}_2 \lesssim \frac{\epsilon}{1-\delta} +   \frac{1}{1-\delta}\cdot \frac{ \nm{z}_{TV}\cdot\log(N^2/s)}{\sqrt{r}}.
$$
\end{lemma}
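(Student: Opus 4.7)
The plan is to mirror the proof of Lemma \ref{lem:strong_sobolev} line by line, substituting the bivariate Haar decay bound of Lemma \ref{lem:discrete_haar_decay_2d} in place of the univariate bound of Lemma \ref{lem:discrete_haar_decay}. The only essential change is arithmetic: because bivariate Haar coefficients decay at rate $j^{-1}$ instead of $j^{-3/2}$, the $\ell^1$ tail sum now produces a $\log(N^2/r)$ factor rather than a $r^{-1/2}$ factor, while the $\ell^2$ tail sum still yields $r^{-1/2}$.

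First, I would decompose $z = z^L + z^H$ where $z^L \in \bbC^{N\times N}$ is the constant matrix equal to the mean $N^{-2}\sum_{j,k} z_{j,k}$, and $z^H$ is mean-zero with $\nm{z^H}_{TV} = \nm{z}_{TV}$. Setting $c = \tilde\rW z = c^L + c^H$, the ``low-frequency'' part $c^L$ is supported on the single scaling coefficient indexed by $\tilde\Phi$, while $c^H$ is supported on the wavelet indices. Order the indices of $c^H$ in decreasing order of $|c^H_{\cdot}|$ and partition them into disjoint blocks $\Delta_0, \Delta_1, \Delta_2, \ldots$ each of cardinality $r$, with $\Delta_0$ containing the $r$ largest entries, and so on.

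Second, I would use Lemma \ref{lem:discrete_haar_decay_2d} applied to $z^H$ to bound
\[
\nm{\rP_{\Delta_0}^\perp c^H}_1 \;\leq\; C\nm{z}_{TV}\!\!\sum_{j=r+1}^{N^2}\frac{1}{j} \;\lesssim\; \nm{z}_{TV}\log(N^2/r),
\qquad
\nm{\rP_{\Delta_0}^\perp c^H}_2 \;\leq\; C\nm{z}_{TV}\!\sqrt{\sum_{j=r+1}^{N^2}\frac{1}{j^2}} \;\lesssim\; \frac{\nm{z}_{TV}}{\sqrt{r}}.
\]
Now, noting that $c^L + \rP_{\Delta_0}c^H + \rP_{\Delta_1}c^H$ is $(2r+1)$-sparse and using $\rV z = \rU c$ together with the RIP assumption on $\rU$, I would apply the triangle inequality together with the standard cone-type block bound $\nm{\rP_{\Delta_j} c^H}_2 \leq r^{-1/2}\nm{\rP_{\Delta_{j-1}} c^H}_1$ for $j\geq 2$, giving
\[
\epsilon \;\geq\; \nm{\rU c}_2 \;\geq\; (1-\delta)\nm{c^L+\rP_{\Delta_0}c^H}_2 - \frac{1+\delta}{\sqrt{r}}\nm{\rP_{\Delta_0}^\perp c^H}_1.
\]

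Third, I would rearrange this inequality to get a bound on $\nm{c^L+\rP_{\Delta_0}c^H}_2$, and then combine with the $\ell^2$ tail estimate through the identity $\nm{z}_2 = \nm{c}_2 \leq \nm{c^L+\rP_{\Delta_0}c^H}_2 + \nm{\rP_{\Delta_0}^\perp c^H}_2$, which follows because $\tilde\rW$ is unitary. The resulting expression is exactly the claimed bound (with $s$ in the statement playing the role of $r$). No part of this argument is genuinely hard: the routine obstacle is tracking the $\log(N^2/r)$ arising from the harmonic tail, which is the sole qualitative difference from the one-dimensional case and is precisely the reason the bound is weaker by this logarithmic factor than its one-dimensional counterpart.
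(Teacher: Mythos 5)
Your proposal is correct and is exactly the argument the paper intends: the paper itself gives no separate proof of this lemma, stating only that one repeats the proof of Lemma \ref{lem:strong_sobolev} with the bivariate decay bound of Lemma \ref{lem:discrete_haar_decay_2d} in place of Lemma \ref{lem:discrete_haar_decay}, which is precisely what you do, and your bookkeeping of the harmonic $\ell^1$ tail giving $\log(N^2/r)$ versus the still-summable $\ell^2$ tail giving $r^{-1/2}$ is right. Your parenthetical observation that the $s$ appearing in the stated bound is really the RIP parameter $r$ (they agree up to log-log factors in the way the lemma is invoked in Proposition \ref{prop:gradtosignal_stability_2d}) is also accurate.
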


The final lemma which we require to deduce Proposition \ref{prop:gradtosignal_stability_2d} is the following, which considers how one can combine the discrete Fourier transform and the discrete Haar transform such that the RIP is satisfied.

\begin{lemma}\cite{ward2013stable} \label{lem:RIP_2d}
Let $N= 2^J$ for some $J\in\bbN$ and let $\rU = \frac{1}{\sqrt{m}} \rX \rP_\Omega \rV \tilde \rW^*$ where $\rV$ is the unitary discrete Fourier transform on $\bbC^{N\times N}$, $\tilde \rW$ is the discrete bivariate Haar transform and $\rX:\bbC^{N\times N} \to \bbC^{N\times N}$ is such that $\rX z =  \tilde p \circ z$, where $\circ$ denotes pointwise multiplication and $\tilde p \in\bbC^{N\times N}$ is defined by 
$$\tilde p = (p(k_1,k_2))_{k_1,k_2=-N/2+1}^{N/2}, \qquad p(k_1,k_2) = C \sqrt{\log(N)}\max\br{1, \sqrt{\abs{k_1}^2+\abs{k_2}^2}},$$ where $C$ is an appropriate constant such that $p^{-2}$ is a probability measure  on $\br{-N/2+1,\ldots,N/2}^2$.
 Then, with probability exceeding $1-\epsilon$, $\rU$ satisfies the RIP of order $s$ and level $\delta$ provided that  $\Omega$ consists of $m$ i.i.d. indices chosen in accordance with the probability measure $p^{-2}$ and
$$
m \geq C \cdot \delta^{-2} s \max\br{
\log^2(s)\log(\delta^{-2}s\log^2(N))\log^2(N), \, \log(\epsilon^{-1})}
$$

\end{lemma}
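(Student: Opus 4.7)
The plan is to prove Lemma \ref{lem:RIP_2d} by reducing it to Theorem \ref{thm:BOS}, following the same four-step template that was used for Lemma \ref{lem:RIP} in one dimension. Namely: (i) view the Fourier and bivariate-Haar bases as a pair of orthonormal systems and bound their cross-inner-products; (ii) use those bounds to define the variable-density sampling measure $p^{-2}$; (iii) verify that the preconditioning by the diagonal weights $\rX$ converts the random samples of $\rV\tilde\rW^*$ into bounded orthonormal samples; (iv) plug the resulting bound $\|\kappa\|_2^2 \lesssim \log N$ into Theorem \ref{thm:BOS}.

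First I would write the Fourier basis on $\bbC^{N\times N}$ as $\psi_{k_1,k_2} = N^{-1}(e^{2\pi i(k_1 j_1 + k_2 j_2)/N})_{j_1,j_2}$, and use the tensor structure of the bivariate Haar basis: each $\tilde H_{j_1,j_2}$ is (up to normalization) a product of two one-dimensional Haar vectors, or more precisely a function of the form $H^{(1)}_{a}\otimes H^{(2)}_{b}$ where one factor is a scaling vector and the other a wavelet (or both are wavelets). By separability of the Fourier transform, $\langle \psi_{k_1,k_2}, H^{(1)}_a\otimes H^{(2)}_b\rangle$ factors as a product of two one-dimensional Fourier-Haar inner products. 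Using the one-dimensional bound $|\langle\psi_k,H_j\rangle|\leq \min\{1,3\sqrt{2\pi}/\sqrt{|k|}\}$ cited from \cite[Section VI, Corollary 2]{ward2013stable} in the scaling direction and the standard Fourier-Haar vanishing-moment estimate in the wavelet direction, a short case analysis gives the coherence bound
\[
\max_{j_1,j_2}\bigl|\langle \psi_{k_1,k_2},\tilde H_{j_1,j_2}\rangle\bigr| \;\leq\; \kappa_{k_1,k_2} \;:=\; \min\!\left\{1,\;\frac{C'}{\sqrt{k_1^2+k_2^2}}\right\}
\]
for an absolute constant $C'$, with the convention that the bound is $1$ when $k_1=k_2=0$.

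Next I would compute $\|\kappa\|_2^2$. Splitting the sum according to whether $k_1^2+k_2^2 \leq (C')^2$ or not, the first region contributes an absolute constant and the second gives
\[
\sum_{1\leq k_1^2+k_2^2\leq N^2/2}\frac{(C')^2}{k_1^2+k_2^2} \;\lesssim\; \int_{1}^{N}\frac{1}{r^2}\cdot r\,dr \;\lesssim\; \log N,
\]
so $\|\kappa\|_2^2 \lesssim \log N$. Setting $\nu(k_1,k_2) = \kappa_{k_1,k_2}^2/\|\kappa\|_2^2$ and $d_{k_1,k_2} = \|\kappa\|_2/\kappa_{k_1,k_2}$, the system $\eta_{j_1,j_2}(k_1,k_2) := d_{k_1,k_2}\langle \psi_{k_1,k_2},\tilde H_{j_1,j_2}\rangle$ is orthonormal with respect to $\nu$ by a direct Plancherel computation, and is bounded pointwise by $\|\kappa\|_2$, hence it satisfies Definition \ref{def:bos} with bound $K=\|\kappa\|_2\lesssim \sqrt{\log N}$. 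Note that up to the normalisation constant $C$, the probability measure $\nu$ coincides with $p^{-2}$ appearing in the statement of the lemma, since $p(k_1,k_2) \asymp \sqrt{\log N}\cdot\max\{1,\sqrt{k_1^2+k_2^2}\}$ and $\kappa_{k_1,k_2}^{-1} \asymp \max\{1,\sqrt{k_1^2+k_2^2}\}$; the weights in $\rX$ are exactly the $d_{k_1,k_2}$ up to the same constant.

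Finally I would apply Theorem \ref{thm:BOS} with $K^2 \lesssim \log N$ to conclude that with probability at least $1-\epsilon$ the matrix $\frac{1}{\sqrt m}\rX\rP_\Omega\rV\tilde\rW^*$ satisfies the RIP of order $s$ and level $\delta$, provided
\[
m \;\gtrsim\; \delta^{-2}\,\log(N)\,s\,\max\!\Bigl\{\log^{2}(s)\log\!\bigl(\delta^{-2}s\log^2(N)\bigr)\log(N),\;\log(\epsilon^{-1})\Bigr\},
\]
which is exactly the bound claimed. The main obstacle in this plan is the careful verification of the coherence bound on $\langle\psi_{k_1,k_2},\tilde H_{j_1,j_2}\rangle$: the bivariate Haar basis mixes scaling and wavelet functions in the two coordinates, so one needs to handle the three types $\tilde\Psi^v,\tilde\Psi^h,\tilde\Psi^d$ separately and in each case exploit both the tensor factorization of the Fourier transform and the uniform one-dimensional estimate of \cite{ward2013stable}. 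Everything else is essentially bookkeeping and a direct appeal to Theorem \ref{thm:BOS}.
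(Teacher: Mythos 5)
Your overall strategy is the route the paper itself points to: the paper gives no proof of Lemma \ref{lem:RIP_2d} at all, citing \cite{ward2013stable}, and for the one-dimensional analogue (Lemma \ref{lem:RIP}) it sketches exactly your four steps --- bound the Fourier--Haar cross-coherences $\kappa$, check $\nm{\kappa}_2^2 \lesssim \log N$, precondition by $d = \nm{\kappa}_2/\kappa$ to obtain a bounded orthonormal system in the sense of Definition \ref{def:bos}, and invoke Theorem \ref{thm:BOS}. Your two-dimensional computation $\nm{\kappa}_2^2 \lesssim \int_1^N r^{-2}\, r\, \mathrm{d}r \lesssim \log N$, the identification of $\nu$ with $p^{-2}$, and the final appeal to Theorem \ref{thm:BOS} with $K^2 \lesssim \log N$ are all correct and reproduce the stated sample complexity.

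The one step that would fail as literally written is the coherence bound. The product of the two \emph{level-uniform} one-dimensional bounds $\min\{1, 3\sqrt{2\pi}/\sqrt{|k_i|}\}$ does not imply $|\langle \psi_{k_1,k_2}, \tilde H_{j_1,j_2}\rangle| \lesssim (k_1^2+k_2^2)^{-1/2}$: take $k_2=0$ and $k_1$ large, where the product only yields $|k_1|^{-1/2}$, which is off by a factor of $\sqrt{|k_1|}$. To obtain the claimed two-dimensional bound one must keep the dilation level $j$ explicit: for a bivariate Haar element of type $\tilde\Psi^h_{j,k}$, say, the wavelet factor has Fourier decay $\lesssim 2^{-j/2}\min\bigl(|k_1| 2^{-j},\, 2^{j}|k_1|^{-1}\bigr)$ while the scaling factor is bounded by $2^{-j/2}\min\bigl(1,\, 2^{j}|k_2|^{-1}\bigr)$; a case analysis on whether $|k_1| \geq |k_2|$ then gives $\lesssim 1/\max(|k_1|,|k_2|) \asymp (k_1^2+k_2^2)^{-1/2}$, and the types $\tilde\Psi^v$ and $\tilde\Psi^d$ are handled identically. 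You do gesture at this with the ``vanishing-moment estimate in the wavelet direction,'' but invoking the uniform $|k|^{-1/2}$ estimate ``in the scaling direction'' is the wrong tool --- the scaling factor needs the level-dependent bound, and the uniform one-dimensional wavelet bound plays no role. With that repair your argument is precisely the proof of \cite[Theorem 4, Section VI]{ward2013stable} on which the paper implicitly relies.
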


\begin{proof}[Proof of Proposition \ref{prop:gradtosignal_stability_2d}]
 Let $\rV = N^{-1} \rA$ and note that it is unitary. Recall the definition of $\tilde p$ and $\rX$ from Lemma \ref{lem:RIP_2d}. Since  $m=\ord{s \cdot \log (N)\cdot (1+\log(\epsilon^{-1}))}$, it follows from Lemma \ref{lem:RIP_2d} that with probability at least $1-\epsilon$, $\rU = \frac{1}{\sqrt{m}} \rX \rP_\Omega \rV \rW^*$  satisfies the RIP of order 
$$
r= \frac{s}{\log^2(s)\log(N)(\log(s)+\log\log(N))}
$$and level $1/2$. Also, since $\nm{\tilde p}_\infty \lesssim N \sqrt{\log(N)}$
$$
\nm{\frac{1}{\sqrt{m}} \rX \rP_\Omega \rV z}_2 \lesssim \sqrt{\log(N)}\cdot 
\nm{\frac{1}{\sqrt{m}}\rP_\Omega  \rA z}_2 \leq \sqrt{\log(N)}\cdot \gamma.
$$
So, by Lemma \ref{lem:strong_sobolev_2d},
$$
\nm{z}_2 \lesssim \sqrt{\log(N)}\cdot \gamma +  \frac{ \nm{z}_{TV}\cdot\log(N^2/s)\cdot \sqrt{\log^2(s)\log(N)(\log(s)+\log\log(N))}}{\sqrt{s}}.
$$
\end{proof}

\subsection{Proofs of Theorem \ref{thm:near_optimal}, Theorem \ref{cor:unif_samp_2D_opt}, Theorem \ref{thm:unif_samp} and Theorem \ref{cor:unif_samp_2D}}

We first prove Theorem \ref{thm:unif_samp}, then mention the modifications required to prove Theorem \ref{thm:near_optimal}, Theorem \ref{cor:unif_samp_2D_opt} and Theorem \ref{cor:unif_samp_2D}.
To begin, we recall some definitions from \cite{candes2011probabilistic}.
\begin{definition}
Let  $\rU\in\bbC^{N\times N}$. 
\begin{enumerate}
\item For $\rU$ such that $N^{-1/2} \rU$ is an isometry, the coherence of $\rU$ is 
 $\mu(\rU) := \max_{i,j} \abs{\rU_{i,j}}^2$.
 \item Given $\Delta\subset \br{1,\ldots, N}$, $\delta\in(0,1)$ and $r\in\bbN$, $\rU$ is said to satisfy the weak restricted isometry property (RIP) if  for all $v$ supported on $\Delta \cup \Gamma$ with $\abs{\Gamma} \leq r$,
$$
(1-\delta) \nmu{v}_2^2 \leq \nmu{\rU v}_2^2 \leq (1+\delta) \nmu{v}^2_2.
$$
\end{enumerate}

\end{definition}
Note that given two orthonormal bases $\Phi = \br{\phi_j}_{j=1}^N$ and $\Psi = \br{\psi_j}_{j=1}^N$ for $\bbC^N$, the columns of the matrix $\rU = \sqrt{N}\left(\ip{\psi_j}{\phi_k}\right)_{k,j=1}^N$ form a bounded orthonormal system with respect to the uniform measure $\nu$ on $[N]$ with $\nu(S) = \abs{S}/N$ for $S\subset [N]$. The coherence of this matrix $\rU$ is such that $\mu(\rU) \in [1, N]$ and can be understood as a measure for the correlation between the two bases $\Phi$ and $\Psi$. To prove Theorem \ref{thm:unif_samp}, we will make use of the fact that the  discrete Fourier transform matrix satisfies $\mu(\rU) = 1$ and the following result, which was essentially derived in \cite{candes2011probabilistic} (see assumptions (i)-(iii) at the start of the proof of Theorem 1.2 in \cite{candes2011probabilistic}). This result shows the existence of a dual certificate, which we will show is a sufficient dual vector to guarantee the recovery bounds in Theorem \ref{thm:unif_samp}.

The actual proof of Theorem \ref{thm:unif_samp} is similar to the proof of Theorem 1.2 in \cite{candes2011probabilistic}, however, as the minimization problem (\ref{eq:noise_tv}) is slightly different from the setup in \cite{candes2011probabilistic}, for completeness, we will repeat many steps of the argument from \cite{candes2011probabilistic}. 
\begin{proposition} \cite{candes2011probabilistic} \label{prop:dual_existence_inc}
Let $\epsilon \in (0,1)$.
Let $\rU\in\bbC^{N\times N}$ be such that $N^{-1/2}\rU$ is unitary. Let $m\in\bbN$ with $0<m\leq N$. Let $\Lambda \subset \br{1,\ldots, N}$ and let $x_0 \in\bbC^N$.  Let $s = \abs{\Lambda}$. Let $\Gamma \subset \br{1,\ldots, N}$ be $m$ indices drawn uniformly at random.   Let $\rU_{\Gamma,\Lambda} =  m^{-1/2} \rP_\Gamma \rU \rP_\Lambda$. If 
$$
m \geq C \cdot \left(1+\log(\epsilon^{-1})\right) \cdot \mu(U) \cdot  \log(N) \cdot  s,
$$ for some appropriate numerical constant $C$, then the following hold with probability exceeding $1-\epsilon$.
\begin{itemize}
\item[(i)] $\nmu{(\rU_{\Gamma,\Lambda}^* \rU_{\Gamma,\Lambda})^{-1}}_{2\to 2} \leq 2$
\item[(ii)] $m^{-1/2}\max_{i\in\Lambda^c} \nmu{\rU_{\Gamma,\Lambda}^* \rP_\Gamma \rU e_i}_2 \leq 1$
\end{itemize}
and there exists $\rho = \rU^* \rP_\Gamma w$ such that 
\begin{itemize}
\item[(iii)] $\nmu{\rP_\Lambda \rho - \sgn(\rP_\Lambda x_0)}_2 \leq 1/4$
\item[(iv)] $\nmu{\rP_\Lambda^\perp \rho}_\infty \leq 1/4$
\item[(v)] there exists some numerical constant $C_0$ such that $\nmu{w}_2 \leq  C_0 \cdot \abs{\Lambda}^{1/2}\cdot m^{1/2}$.
\item[(vi)] $m^{-1/2} \rP_\Gamma \rU$ satisfies the  weak RIP with respect to $\Lambda$, $\delta = 1/4$ and 
$$
r= \left\lfloor \frac{m}{C\left(1+\log(\epsilon^{-1})\right) \mu(\rU) \log(N) \log(m) \log^2(s)} \right\rfloor.
$$

\end{itemize}

\end{proposition}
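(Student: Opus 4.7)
The plan is to follow the approach of Candès and Plan in \cite{candes2011probabilistic}, where claims of this form are proved by combining matrix concentration inequalities with the golfing scheme of Gross \cite{gross2011recovering}. The normalisation that $N^{-1/2}\rU$ is unitary, together with the coherence bound $\mu(\rU)$, lets us express the relevant random objects as sums of bounded (jointly independent, after passing to the Bernoulli selection model) random vectors and matrices to which Bernstein-type tail bounds apply.

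I would first dispatch the concentration statements (i), (ii), and (vi). For (i), write
\bes{
\rU_{\Gamma,\Lambda}^*\rU_{\Gamma,\Lambda} - I_\Lambda \; = \; \sum_{k\in\Gamma}\tfrac{1}{m}\bigl(\rP_\Lambda \rU^* e_k\bigr)\bigl(\rP_\Lambda \rU^* e_k\bigr)^{\!*} - I_\Lambda,
}
observe that each self-adjoint rank-one summand has operator norm at most $\mu(\rU)s/m$, and apply matrix Bernstein; the hypothesis $m \gtrsim \mu(\rU)\,s\,\log(N)(1+\log(\epsilon^{-1}))$ is exactly what is needed to force $\nm{\rU_{\Gamma,\Lambda}^*\rU_{\Gamma,\Lambda} - I_\Lambda}_{2\to 2} \leq 1/2$, which gives (i). For (ii), I would apply the vector Bernstein inequality to $\rU_{\Gamma,\Lambda}^*\rP_\Gamma\rU e_i$ for each $i\in\Lambda^c$ and take a union bound over the at most $N$ such indices; the $\log N$ factor absorbs the union bound. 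For the weak RIP (vi), I would combine matrix Bernstein with a Rudelson--Vershynin-type entropy / union-bound argument over all $T\supset \Lambda$ with $\abs{T\setminus\Lambda}\leq r$; the extra $\log(m)\log^2(s)$ factor in the denominator of $r$ is exactly what the Dudley entropy estimate for the unit sphere of $(s+r)$-sparse vectors requires.

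For the dual certificate (iii)--(v), I would invoke the golfing scheme. Partition $\Gamma$ into $L \sim \log\sqrt{s}$ independent sub-batches $\Gamma_1,\ldots,\Gamma_L$ of comparable size $m_\ell \sim m/L$, initialise $q_0 := \sgn(\rP_\Lambda x_0) \in \ell^2(\Lambda)$, and iterate
\bes{
q_\ell := \left(I_\Lambda - \tfrac{1}{m_\ell}\rP_\Lambda \rU^* \rP_{\Gamma_\ell}\rU\rP_\Lambda\right) q_{\ell-1}, \qquad
\rho := \sum_{\ell=1}^{L} \tfrac{1}{m_\ell}\rU^* \rP_{\Gamma_\ell}\rU\rP_\Lambda q_{\ell-1}.
}
By construction $\rP_\Lambda\rho = \sgn(\rP_\Lambda x_0) - q_L$. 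Applying (i) to each batch gives the contraction $\nm{q_\ell}_2 \leq \tfrac{1}{2}\nm{q_{\ell-1}}_2$, so $\nm{q_L}_2 \leq 2^{-L}\sqrt{s} \leq 1/4$, which is (iii). For (iv), applying (ii) to each batch bounds $\nm{\rP_\Lambda^\perp \rU^*\rP_{\Gamma_\ell}\rU \rP_\Lambda q_{\ell-1}/m_\ell}_\infty$ by a constant multiple of $s^{-1/2}\nm{q_{\ell-1}}_2$; summing the resulting geometric series and tuning constants yields $\nm{\rP_\Lambda^\perp \rho}_\infty \leq 1/4$. The witness $w$ is the concatenation of $\tfrac{1}{m_\ell}\rP_{\Gamma_\ell}\rU \rP_\Lambda q_{\ell-1}$ across $\ell$, and $\nm{w}_2 \lesssim \sqrt{sm}$ follows by a batchwise Bernstein estimate together with $\nm{q_{\ell-1}}_2 \leq 2^{-(\ell-1)}\sqrt{s}$.

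The main obstacle is the probabilistic bookkeeping: the events behind (i), (ii), and (vi) must hold simultaneously on each of the $L \sim \log s$ batches, which forces every single batch to contain $\gtrsim \mu(\rU)\,s\,\log(N)(1+\log(\epsilon^{-1}))$ samples on its own, and the union bound across batches must be absorbed into $\epsilon$ without inflating the ambient sampling count. A secondary technicality is that the paper samples uniformly at random \emph{without} replacement, while the Bernstein bounds and the golfing scheme are cleanest for i.i.d. draws; this is handled by the standard comparison with the Bernoulli selection model, exactly as in \cite{candes2011probabilistic, gross2011recovering}.
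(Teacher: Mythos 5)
Your architecture is the right one, but two preliminary remarks are in order. The paper itself does not prove this proposition: it is imported wholesale from \cite{candes2011probabilistic} (the conditions are read off from the hypotheses at the start of the proof of Theorem 1.2 there), with only a remark about converting between independent sampling and sampling without replacement. So you are reconstructing the cited proof, and your overall plan --- matrix Bernstein for (i), a union-bounded vector Bernstein for (ii), a Rudelson--Vershynin entropy argument for the weak RIP (vi), and the golfing scheme for the certificate (iii)--(v) --- is exactly the plan of \cite{candes2011probabilistic}.

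The genuine gap is precisely in the bookkeeping you flag as the main obstacle, and as you have described it the proof does not close. If each of the $L\sim\log s$ golfing batches must contain $\gtrsim \mu(\rU)\,s\,\log(N)(1+\log(\epsilon^{-1}))$ samples on its own, the total is $m\gtrsim \mu(\rU)\,s\,\log(N)\log(s)(1+\log(\epsilon^{-1}))$, which exceeds the stated hypothesis by a factor of $\log s$. The missing idea in \cite{candes2011probabilistic} is that the batches are \emph{not} treated uniformly: only the first two iterations require $\ord{\mu(\rU)\, s\log N}$ samples with high per-batch success probability; each subsequent iteration needs only $\ord{\mu(\rU)\, s}$ samples and is permitted to fail with constant probability, and one oversamples the number of iterations and retains the successful ones. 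Without this refinement your route proves a weaker sampling bound than the one claimed. Two smaller imprecisions: the per-batch events driving the recursion are not conditions (i)--(ii) restricted to batches --- in particular, (iv) needs $\nmu{\tfrac{1}{m_\ell}\rP_\Lambda^\perp\rU^*\rP_{\Gamma_\ell}\rU\rP_\Lambda q_{\ell-1}}_\infty\lesssim s^{-1/2}\nmu{q_{\ell-1}}_2$ for the \emph{fixed} vector $q_{\ell-1}$ (independent of $\Gamma_\ell$), which is a separate Bernstein estimate and not a consequence of the uniform column bound (ii); and (vi) is a property of the full set $\Gamma$, established once rather than per batch. Finally, the exponent in (v) is a typo in the paper (compare Proposition \ref{prop:dual}(v) and its use in the proof of Theorem \ref{thm:unif_samp}): the bound that is actually needed downstream, and the one your golfing construction delivers, is $\nmu{w}_2\lesssim\sqrt{s/m}$ rather than $\sqrt{sm}$, so you should target the former.
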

\begin{remark}
Instead of the sampling without replacement model stated in Proposition \ref{prop:dual_existence_inc}, the version proved in \cite{candes2011probabilistic} actually considered a slightly different probability model, where $\Gamma$ is drawn independently, so that its elements are not necessarily unique. However, their proofs rely on arguments from \cite{gross2011recovering} and \cite{rudelson2008sparse} which allow for the same statements to be made in the case of sampling without replacement (see also \cite{gross2010note}).
\end{remark}

\begin{proof}[Proof of Theorem \ref{thm:unif_samp}]
First, the discrete Fourier transform $\rA$ is such that $N^{-1/2}\rA$ is unitary and $\mu(\rA)=1$. So, 
by letting
$$
\rU := \rA, \quad \Gamma := \Omega', \quad \Lambda := \Delta, \quad x_0 :=x
$$
in Proposition \ref{prop:dual_existence_inc},
conditions (i)-(vi) of Proposition \ref{prop:dual_existence_inc} are true with probability exceeding $1-\epsilon$ provided that the number of samples $m$ is chosen in accordance with (\ref{eq:num_samples_unif}). 
The rest of this proof will show that these conditions imply stable gradient recovery.
Let $z = \hat x - x$.

\textbf{Step I:}  We will show that $$
\nmu{\rP_\Delta^\perp \rD z}_1 \lesssim  \nmu{\rP_\Delta^\perp \rD x}_1
+  \delta\cdot\left(1+  \sqrt{s}\right).
$$

To do this, we first demonstrate that $\nmu{\rP_\Delta \rD z}_2$ can be controlled by $\delta$ and $\nmu{\rP_\Delta^\perp \rD z}_1$.  
Since $m^{-1/2}\rP_\Omega \rA$ satisfies the weak RIP with respect to $\delta = 1/4$ and $\Delta$, we have that $\nm{m^{-1/2}\rP_\Omega \rA\rP_\Delta}_2 \leq \sqrt{5}/2$. So, combining this with properties (i) and (ii)  of Proposition \ref{prop:dual_existence_inc}, repeated application of H\"{o}lder's inequality yields the following.
\spl{\label{eq:step1_a}
\nmu{\rP_\Delta \rD z}_2 &= \nm{(\rA_{\Omega', \Delta}^* \rA_{\Omega', \Delta})^{-1} \rA_{\Omega', \Delta}^* \rA_{\Omega', \Delta} \rP_\Delta \rD z}_2\\
&\leq  2\left(\nm{\frac{1}{m} \rP_\Delta \rA^* \rP_{\Omega'} \rA \rD z}_2 + \nm{\frac{1}{m}  \rA_{\Omega', \Delta}^* \rP_{\Omega'} \rA \rP_\Delta^\perp \rD z}_2\right)\\
&\leq \sqrt{5} \cdot \frac{1}{\sqrt{m}} \nm{\rP_{\Omega'} \rA \rD z}_2 + 2
\max_{j\in\Delta^c} \frac{1}{\sqrt{m}}\nm{ \rA_{\Omega', \Delta}^* \rP_{\Omega'} \rA  e_j}_2 \nmu{\rP_\Delta^\perp \rD z}_1 \\
&=\sqrt{5}\cdot \frac{1}{\sqrt{m}} \nmu{\rP_{\Omega'} \rA \rD z}_2 +
2 \nmu{\rP_\Delta^\perp \rD z}_1 \\
&\leq 4\sqrt{5}\cdot \delta + 2\nmu{\rP_\Delta^\perp \rD z}_1.
}
In the last line of the above computation, note that  for all $k\neq 0$, $v_k \cdot(\rA z)_k =  (\rA \rD z)_k$ where $v_k = 1-e^{2\pi i k/N}$. Also, $(\rA \rD z)_0 = 0$. Thus, since $\nmu{\rP_\Omega \rA z}_2 \leq \nm{\rP_\Omega \rA x - y}_2 +\nm{\rP_\Omega \rA \hat x - y}_2 \leq 2\delta\sqrt{m}$ by the enforced constraint in the minimization problem, we have that
 $$\nmu{\rP_{\Omega'} \rA \rD z}_2 \leq \nmu{\rP_\Omega \rA \rD z}_2 = 
 \nmu{\mathrm{diag}\left((v_k)_{k\in\Omega}\right)\cdot \rP_\Omega \rA  z}_2
 \leq 2 \nmu{\rP_\Omega \rA z}_2 \leq 4\delta\sqrt{m},$$ since $\abs{v_k} \leq 2$.

To bound $\nmu{\rP_\Delta^\perp \rD z}_1$, first observe that by algebraic manipulation,
$$
\nmu{\rD \hat x}_1\geq \nmu{\rP_\Delta^\perp \rD z}_1 - 2\nmu{\rP_{\Delta}^\perp \rD x}_1 + \nmu{\rD x}_1 + \Re\ip{\rP_\Delta \rD z}{\sgn(\rP_\Delta \rD x)},
$$
and by applying the assumption that $\hat x$ is a minimizer, so $\nm{\rD \hat x}_1 \leq \nm{\rD x}_1$, we have that $$
\nmu{\rP_\Delta^\perp \rD z}_1 \leq 2\nmu{\rP_{\Delta}^\perp \rD x}_1 + \abs{\Re\ip{\rP_\Delta \rD z}{\sgn(\rP_\Delta \rD x)}}.
$$
By properties (iii)-(v) of the dual certificate $\rho$ from Proposition \ref{prop:dual_existence_inc},
\eas{
\abs{\Re\ip{\rP_\Delta \rD z}{\sgn(\rP_\Delta \rD x)}} &\leq \abs{\ip{\rP_\Delta \rD z}{\sgn(\rP_\Delta \rD x)  - \rP_\Delta \rho}} + \abs{\ip{\rP_\Delta \rD z}{ \rP_\Delta \rho}} \\
&\leq \nmu{\rP_\Delta \rD z}_2   \cdot \frac{1}{4} + \abs{\ip{ \rD z}{\rho}} + \abs{\ip{ \rP_\Delta^\perp\rD z}{\rho}}\\
&\leq \nmu{\rP_\Delta \rD z}_2   \cdot \frac{1}{4}   + 
 \abs{\ip{ \rP_{\Omega'} \rA \rD z}{w}} + \nmu{\rP_\Delta^\perp \rD z}_1 \cdot \frac{1}{4} \\
 &\leq \delta\cdot\left( \sqrt{5} +
 4 C_0\cdot \sqrt{s}\right) +\frac{3}{4}\cdot \nmu{\rP_\Delta^\perp \rD z}_1 
}
where we have applied the Cauchy-Schwarz inequality with the fact that $\nmu{\rP_{\Omega'} \rA \rD z}_2 \leq 4\delta\sqrt{m}$ and $\nmu{w}_2 \leq  C_0 \cdot \sqrt{s m}$. So,
\be{\label{eq:step1_b}
\nmu{\rP_\Delta^\perp \rD z}_1 \leq 8\nmu{\rP_\Delta^\perp \rD x}_1
+  \delta\cdot\left( 4\sqrt{5} +
 16 \cdot C_0\cdot \sqrt{s}\right).
}

\textbf{
Step II:} Assume that $r\geq 1$ (recall that the $m^{-1/2}\rP_{\Omega'} \rA$ satisfies the weak RIP with respect to $r$). Let $h = \rD z$ and partition $\Delta^c$ into subsets of at most cardinality $r$ by letting $\Delta_1$ be the indices of the $r$ largest entries of $\rP_\Delta^\perp h$, $\Delta_2$ be the next $r$ largest entries and so on. Let $\tilde \Delta = \Delta\cup \Delta_1$. Since $m^{-1/2}\rP_{\Omega'} \rA$ satisfies the weak RIP condition (vi) from Proposition \ref{prop:dual_existence_inc},
\spl{\label{eq:bound_Delta_w}
\nmu{\rP_{\tilde \Delta} h}_2^2 &\leq \frac{4}{3}\nmu{\rA_{\Omega', \tilde \Delta} \rP_{\tilde \Delta} h}_2^2\\
&= \frac{4}{3\sqrt{m}}\left(\ip{\rA_{\Omega', \tilde \Delta} \rP_{\tilde \Delta} h}{\rP_{\Omega'} \rA h} - \ip{\rA_{\Omega', \tilde \Delta} \rP_{\tilde \Delta} h}{\rP_{\Omega'} \rA\rP_{\tilde \Delta}^\perp h}\right).
}
By applying the weak RIP condition and since $\nmu{\rP_{\Omega'} \rA h}_2 \leq 4\cdot \sqrt{m}\cdot \delta$ (as shown in Step I),
\be{\label{eq:first_term}
\abs{m^{-1/2}\ip{\rA_{\Omega', \tilde \Delta} \rP_{\tilde \Delta} h}{\rP_{\Omega'} \rA h}} \leq \sqrt{\frac{5}{4}} \cdot \nmu{\rP_{\tilde \Delta} h}_2 \cdot m^{1/2} \nmu{\rP_{\Omega'} \rA h}_2 \leq 2\sqrt{5} \cdot \nmu{\rP_{\tilde \Delta} h}_2 \cdot \delta.
}
Also, using the  standard compressed sensing result (see \cite[proof of Theorem 1.2]{candes2011probabilistic})
that
\bes{
\sum_{j\geq 2} \nmu{\rP_{\Delta_j} h}_2 \leq \frac{1}{\sqrt{r}} \nmu{\rP_\Delta^\perp h}_1,
}
one obtains
\be{\label{eq:2nd_term}
m^{1/2}\abs{ \ip{\rA_{\Omega', \tilde \Delta} \rP_{\tilde \Delta} h}{\rP_{\Omega'} \rA\rP_{\tilde \Delta}^\perp h}} \leq
\frac{1}{2\sqrt{r}} \nmu{\rP_{\tilde \Delta} h}_2 \nmu{\rP_{\Delta}^\perp h}_1.
}
Therefore, by combining (\ref{eq:bound_Delta_w}), (\ref{eq:first_term}) and (\ref{eq:2nd_term}),
\be{\label{eq:step1}
\nmu{\rP_{\tilde \Delta} h}_2 \leq \left(\frac{8\sqrt{5}}{4}\cdot \delta + \frac{2\nmu{\rP_\Delta^\perp h}_1}{3\sqrt{r}}\right)
}
and 
$$
\nmu{h}_2 \leq \nmu{\rP_{\tilde \Delta} h}_2 + \sum_{j\geq 2} \nmu{\rP_{\Delta_j} h}_2 \leq \left(\frac{8\sqrt{5}}{4}\cdot \delta + \frac{7\nmu{\rP_\Delta^\perp h}_1}{6\sqrt{r}}\right).
$$
Combining with the result of step I and observing that $r^{-1} \leq 1$ yields
\be{\label{eq:bound_grad_l2}
\nmu{h}_2 \leq \delta (15+ 19C_0\sqrt{s}) + \frac{10\nmu{\rP_\Delta^\perp \rD x}_1}{\sqrt{r}}
}
and by (\ref{eq:step1}) and the bound on $\nm{\rP_\Delta^\perp h}_1$ from step I,
\be{\label{eq:bound_grad}
\nmu{h}_1 \leq \sqrt{s} \cdot \nmu{\rP_\Delta h}_2 + \nmu{\rP_{\Delta}^\perp h}_1 \leq \sqrt{s}\nmu{\rP_{\tilde \Delta} h}_2 + \nmu{\rP_\Delta^\perp h}_1
\lesssim  \left(\delta(1+ C_0\sqrt{s}) + \sqrt{\frac{s}{r}} \nmu{\rP_\Delta^\perp \rD x}_1\right).
}

Recalling  $r$ from (vi) of Proposition \ref{prop:dual_existence_inc}, we have that
$$
\frac{s}{r} \leq \log(m) \log^2(s)
$$
and
\be{\label{eq:bound_grad_l1}
\nmu{h}_1 
\lesssim  \delta(1+ C_0\sqrt{s}) + \log^{1/2}(m) \log(s) \nmu{\rP_\Delta^\perp \rD x}_1.
}
Note that if $r\not\geq 1$, then (\ref{eq:bound_grad}) and (\ref{eq:bound_grad_l1}) are still true by combining (\ref{eq:step1_a}) and (\ref{eq:step1_b}) from Step I.

Finally, having shown stable gradient recovery, an application of (i) of Lemma \ref{lem:stab_grad_stab_sig} with (\ref{eq:bound_grad_l1}) concludes this proof.

\end{proof}

\subsubsection{Remark on Theorem \ref{thm:near_optimal}, Theorem \ref{cor:unif_samp_2D_opt} and Theorem \ref{cor:unif_samp_2D}}
The proofs of  Theorems \ref{thm:near_optimal}, \ref{cor:unif_samp_2D_opt} and \ref{cor:unif_samp_2D} are almost identical to the proof of Theorem \ref{thm:unif_samp} and we mention only the necessary modifications in this section.

For the proof of Theorem \ref{cor:unif_samp_2D}, first recall that the proof of Theorem \ref{thm:unif_samp}  relies on Proposition \ref{prop:dual_existence_inc}, the coherence of the discrete Fourier transform and the Poincar\'{e} inequality. In two dimensions, since the two dimensional discrete Fourier transform $\rA$ also has coherence $\mu(\rA)=1$, we may apply the two dimensional statement of Lemma \ref{lem:stab_grad_stab_sig} in the last sentence of the above proof. 

For the proof of Theorem \ref{thm:near_optimal},  we first use the assumption that $\Omega$ consists of a uniform random subsampled part $\Omega_1$ to derive the stable gradient recovery bounds of (\ref{eq:bound_grad_l2}) and (\ref{eq:bound_grad}) as in the proof of Theorem \ref{thm:unif_samp} above. Then, in the last sentence, instead of applying  Lemma \ref{lem:stab_grad_stab_sig} (which was a consequence of the Poincar\'{e} inequality), note that by the choice of $\Omega_2\subset \Omega$, we have that $\nm{\rP_{\Omega_2} \rA (x-\hat x)}_2 \leq \delta\sqrt{m}$ and the hypothesis of Proposition \ref{prop:gradtosignal_stability} is satisfied. So, Proposition \ref{prop:gradtosignal_stability} may be applied to reach the conclusion of Theorem \ref{thm:near_optimal}.

Similarly, the proof of the two dimensional case in Theorem \ref{cor:unif_samp_2D_opt} is the same as the proof of Theorem \ref{cor:unif_samp_2D}, but, we apply the two dimensional result Proposition \ref{prop:gradtosignal_stability_2d} instead of the two dimensional statement of Lemma \ref{lem:stab_grad_stab_sig} in the last part of the proof.

\subsection{Proof of Theorem \ref{thm:min_sep_thm}}
Throughout this section, for $M\in\bbN$,  let $[M] := \br{-M,\ldots, M}$ and let $\Delta := \br{t_1,\cdots, t_s} \subset \br{1,\ldots, N}$ be such that $t_1<t_2<\cdots< t_s$ and $\nu_{\min}(\Delta, N) \geq \frac{1}{M}$, where $\nu_{\min}$ is as defined in Definition \ref{def:min_sep}.

For the proof of Theorem \ref{thm:unif_samp}, we showed that stable gradient recovery is implied by conditions (i) to (vi) of Proposition \ref{prop:dual_existence_inc}, however, the following result shows that a weaker stable gradient recovery statement is still possible even when the last condition (vi) relating to the weak RIP is missing.

\begin{proposition}\label{prop:dual}
Suppose that $0\in\Omega$ and the following conditions hold.
\begin{itemize}
\item[(i)] there exists $\rU\in\bbC^{N\times N}$ such that $\tilde \rL = \frac{1}{m} \rP_\Delta \rU \rP_\Omega \rA \rP_\Delta$ is invertible on the subspace $ \mathrm{span}\br{e_j: j\in\Delta}$, $\nmu{\tilde \rL^{-1}}_{2\to 2} \leq \frac{4}{3}$ and $\nmu{m^{-1/2}\rP_\Delta \rU \rP_\Omega}_{2\to 2} \leq \frac{5}{4}$.
\item[(ii)] $\max_{j\in\Delta^c} \nmu{\frac{1}{\sqrt{m}}\rP_{\Omega } \rA \rP_\Delta^\perp e_j}_{2}\leq 6$.
\end{itemize}
and there exists $\rho = \rA^* \rP_\Omega w$ such that
\begin{itemize}
\item[(iii)] $\nmu{\rP_\Delta^\perp \rho}_\infty \leq c_0$.
\item[(iv)] $\nmu{\rP_\Delta \rho - \sgn(\rP_\Delta \rD x)}_2 \leq c_1 $
\item[(v)] $\nmu{w}_2 \leq c_2 \frac{\sqrt{s}}{\sqrt{m}}$
\end{itemize}
with constants $c_0,c_1,c_2>0$ such that $C_0  = \left(10 c_1 +  c_0 \right) <1$.
Let $\hat x$ be a minimizer of (\ref{eq:noise_tv}). Then 
$$
\nmu{\rD\hat x -\rD x}_2
\lesssim (1-C_0)^{-1}\left( (c_1 +  c_2\sqrt{s}) \cdot \delta + \nmu{\rP_\Delta^\perp\rD x}_1\right).
$$
and
$$
\frac{\nmu{\hat x - x}_2}{\sqrt{N}}
\lesssim (1-C_0)^{-1}\left( (c_1 \sqrt{s}+  c_2 s) \cdot \delta + \sqrt{s} \nmu{\rP_\Delta^\perp\rD x}_1\right).
$$

\end{proposition}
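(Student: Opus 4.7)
The plan is to follow the standard two-step compressed sensing template (cone estimate plus tube estimate) adapted to the gradient-domain minimization problem, then convert the resulting gradient bound to a signal bound via Lemma~\ref{lem:stab_grad_stab_sig}. Throughout, set $z = \hat x - x$ and $h = \rD z$. A key preliminary observation is that, on the Fourier side, $\rA \rD = \mathrm{diag}(v)\rA$ with $v_k = 1 - e^{2\pi i k/N}$ and $|v_k|\leq 2$; since the constraints in (\ref{eq:noise_tv}) force $\nm{\rP_\Omega \rA z}_2 \leq 2\delta\sqrt{m}$, we obtain the basic estimate $\nm{\rP_\Omega \rA h}_2 \leq 4\delta\sqrt{m}$, which will replace the usual $\nm{\rP_\Omega \rA z}_2$ bound.

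\textbf{Step 1 (cone estimate).} I would use the minimality $\nm{\rD\hat x}_1 \leq \nm{\rD x}_1$ together with the triangle inequality in the standard way to get
\[
\nm{\rP_\Delta^\perp h}_1 \leq 2\nm{\rP_\Delta^\perp \rD x}_1 + |\Re\langle \rP_\Delta h,\, \sgn(\rP_\Delta \rD x)\rangle|.
\]
Then I would split the inner product using the dual vector $\rho = \rA^* \rP_\Omega w$ by writing $\sgn(\rP_\Delta \rD x) = (\sgn(\rP_\Delta \rD x) - \rP_\Delta \rho) + \rP_\Delta \rho$, bounding the first piece by $c_1\nm{\rP_\Delta h}_2$ via (iv), and decomposing $\langle \rP_\Delta h,\rP_\Delta \rho\rangle = \langle h,\rho\rangle - \langle \rP_\Delta^\perp h,\rho\rangle$. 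The off-support piece is bounded by $c_0\nm{\rP_\Delta^\perp h}_1$ via (iii), and the full inner product satisfies $|\langle h,\rho\rangle| = |\langle \rP_\Omega \rA h, w\rangle| \leq 4\delta\sqrt{m}\cdot c_2 \sqrt{s}/\sqrt{m} = 4c_2\sqrt{s}\,\delta$ by the preliminary observation and (v). Rearranging yields
\[
(1 - c_0)\nm{\rP_\Delta^\perp h}_1 \;\leq\; 2\nm{\rP_\Delta^\perp \rD x}_1 + c_1\nm{\rP_\Delta h}_2 + 4c_2\sqrt{s}\,\delta.
\]

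\textbf{Step 2 (tube estimate).} Here I use condition (i). Since $\rP_\Delta h$ lies in $\mathrm{span}\{e_j : j\in\Delta\}$, invertibility and the norm bound give $\nm{\rP_\Delta h}_2 \leq \tfrac{4}{3}\nm{\tilde\rL \rP_\Delta h}_2$. Writing $\rP_\Delta h = h - \rP_\Delta^\perp h$ and applying (i) again to bound $\nm{\rP_\Delta \rU \rP_\Omega}_{2\to 2}$ by $\tfrac{5\sqrt{m}}{4}$, I get
\[
\nm{\rP_\Delta h}_2 \leq \tfrac{5}{3\sqrt{m}}\bigl(\nm{\rP_\Omega \rA h}_2 + \nm{\rP_\Omega \rA \rP_\Delta^\perp h}_2\bigr).
\]
The first term is $\leq 4\delta\sqrt{m}$ by the preliminary observation, and for the second I expand $\rP_\Delta^\perp h = \sum_{j\in\Delta^c} h_j e_j$, apply the triangle inequality, and invoke (ii) to get $\nm{\rP_\Omega \rA \rP_\Delta^\perp h}_2 \leq 6\sqrt{m}\nm{\rP_\Delta^\perp h}_1$. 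This yields
\[
\nm{\rP_\Delta h}_2 \;\leq\; \tfrac{20}{3}\delta + 10\nm{\rP_\Delta^\perp h}_1.
\]

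\textbf{Step 3 (combination and signal bound).} Substituting Step 2 into Step 1 gives $(1 - c_0 - 10c_1)\nm{\rP_\Delta^\perp h}_1 \lesssim \nm{\rP_\Delta^\perp \rD x}_1 + (c_1 + c_2\sqrt{s})\delta$, which by the hypothesis $C_0 < 1$ can be solved for $\nm{\rP_\Delta^\perp h}_1$ with a factor $(1-C_0)^{-1}$. The gradient bound $\nm{h}_2 \leq \nm{\rP_\Delta h}_2 + \nm{\rP_\Delta^\perp h}_2 \leq \nm{\rP_\Delta h}_2 + \nm{\rP_\Delta^\perp h}_1$ then delivers the claimed estimate for $\nm{\rD\hat x - \rD x}_2$. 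For the signal bound, I invoke Lemma~\ref{lem:stab_grad_stab_sig}(i): since $0\in\Omega$, we have $|\sum_j z_j| \lesssim \delta\sqrt{m}$, so $\nm{z}_2/\sqrt{N}\lesssim \delta + \nm{h}_1$, and I estimate $\nm{h}_1 \leq \sqrt{s}\nm{\rP_\Delta h}_2 + \nm{\rP_\Delta^\perp h}_1$, inserting the bounds from the previous steps to recover the stated $\sqrt{s}$-loss relative to the gradient bound. The main obstacle is a bookkeeping one — keeping track of the two pieces $\rP_\Omega \rA h$ (controlled through the commutation $\rA\rD = \mathrm{diag}(v)\rA$) and $\rP_\Omega \rA \rP_\Delta^\perp h$ (controlled through hypothesis (ii)) as they propagate through the cone/tube cycle — but no new analytic idea is needed beyond standard compressed sensing manipulations.
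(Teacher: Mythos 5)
Your proposal is correct and follows essentially the same route as the paper's proof: the same commutation identity $\rA\rD = \mathrm{diag}(v)\rA$ to get $\nmu{\rP_\Omega\rA\rD z}_2\leq 4\delta\sqrt m$, the same tube estimate $\nmu{\rP_\Delta h}_2\leq \tfrac{20}{3}\delta+10\nmu{\rP_\Delta^\perp h}_1$ from (i)--(ii), the same cone estimate from minimality and the dual certificate (iii)--(v), and the same final appeal to Lemma~\ref{lem:stab_grad_stab_sig}. The only differences are cosmetic (order of the two estimates, and your correct use of $c_1\nmu{\rP_\Delta h}_2$ where the paper has an apparent typo $\nmu{\rP_\Delta \rD z}_1$).
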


\begin{proof}
Let $z = \hat x - x$. We first demonstrate that $\nmu{\rP_\Delta \rD z}_2$ can be controlled by $\delta$ and $\nmu{\rP_\Delta^\perp \rD z}_1$.
\eas{
\nm{\rP_\Delta \rD z}_2 &= \nm{\tilde \rL^{-1} \tilde \rL \rP_\Delta \rD z}_2\\
&\leq  \frac{4}{3}\left(\nm{\frac{1}{m} \rP_\Delta \rU \rP_\Omega \rA \rD z}_2 + \nm{\frac{1}{m} \rP_\Delta \rU \rP_\Omega \rA \rP_\Delta^\perp \rD z}_2\right)\\
&\leq \frac{4}{3} \cdot\nm{\frac{1}{\sqrt{m}} \rP_\Delta \rU \rP_\Omega }_{2\to 2}\cdot\left( \frac{1}{\sqrt{m}} \nm{\rP_\Omega \rA \rD z}_2 +
\max_{j\in\Delta^c} \frac{1}{\sqrt{m}}\nm{\rP_{\Omega} \rA \rP_\Delta^\perp e_j}_2 \nm{\rP_\Delta^\perp \rD z}_1 \right)\\
&=\frac{5}{3} \cdot\left( \frac{1}{\sqrt{m}} \nm{\rP_\Omega \rA \rD z}_2 +
6\nm{\rP_\Delta^\perp \rD z}_1 \right)\\
&\leq \frac{20 \delta}{3} + 10\nm{\rP_\Delta^\perp \rD z}_1
}
Similarly to the proof of Theorem \ref{thm:unif_samp}, the last line of the above calculation follows because for $k\neq 0$, $v_k \cdot(\rA z)_k =  (\rA \rD z)_k$ where $v_k = 1-e^{2\pi i k/N}$. Also, $(\rA \rD z)_0 = 0$. Thus, 
 $$\nmu{\rP_\Omega \rA \rD z}_2 = 
 \nmu{\mathrm{diag}\left((v_k)_{k\in\Omega}\right)\cdot \rP_\Omega \rA  z}_2
 \leq 2 \nmu{\rP_\Omega \rA z}_2 \leq 4\delta\sqrt{m},$$ since $\abs{v_k} \leq 2$ and $\nmu{\rP_\Omega \rA z}_2 \leq 2\delta\sqrt{m}$ by the enforced constraint in the minimization problem.

To bound $\nmu{\rP_\Delta^\perp \rD x}_1$, first observe that since $\hat x$ is a minimizer and
$$
\nmu{\rD \hat x}_1\geq \nmu{\rP_\Delta^\perp \rD z}_1 - 2\nmu{\rP_{\Delta}^\perp \rD x}_1 + \nmu{\rD x}_1 + \Re\ip{\rP_\Delta \rD z}{\sgn(\rP_\Delta \rD x)},
$$
we have that
$$
\nmu{\rP_\Delta^\perp \rD z}_1 \leq 2\nmu{\rP_{\Delta}^\perp \rD x}_1 + \abs{\Re\ip{\rP_\Delta \rD z}{\sgn(\rP_\Delta \rD x)}}.
$$
By the assumed properties (iii)-(v) of the dual vector,
\eas{
\abs{\Re\ip{\rP_\Delta \rD z}{\sgn(\rP_\Delta \rD x)}} &\leq \abs{\ip{\rP_\Delta \rD z}{\sgn(\rP_\Delta \rD x)  - \rP_\Delta \rho}} + \abs{\ip{\rP_\Delta \rD z}{ \rP_\Delta \rho}} \\
&\leq \nmu{\rP_\Delta \rD z}_1   \cdot c_1 + \abs{\ip{ \rD z}{\rho}} + \abs{\ip{ \rP_\Delta^\perp\rD z}{\rho}}\\
&\leq \nmu{\rP_\Delta \rD z}_1   \cdot c_1  + 
 \abs{\ip{ \rP_\Omega \rA \rD z}{w}} + \nmu{\rP_\Delta^\perp \rD z}_1 \cdot c_0\\
 &\leq \delta\cdot\left(\frac{20 c_1}{3} +
 4 c_2\cdot \sqrt{s}\right) +\left(10 c_1 +  c_0 \right)\cdot \nmu{\rP_\Delta^\perp \rD z}_1 
}
where we use the fact that $\nmu{\rP_\Omega \rA \rD z}_2 \leq 4\delta\sqrt{m}$.
So,
$$
\nmu{\rD z}_1 \leq \sqrt{s}\nmu{\rP_\Delta\rD z}_2 + \nmu{\rP_\Delta^\perp \rD z}_1 \leq (1-C_0)^{-1}\cdot \left(  \delta\cdot\left(\frac{20 c_1 \sqrt{s}}{3} +
 4 c_2\cdot s\right)  +2 \sqrt{s} \nmu{\rP_\Delta^\perp \rD x}_1\right)
$$
Finally, the bound on $\nm{z}_2$ follows from a direct application of Lemma \ref{lem:stab_grad_stab_sig}.

\end{proof}

It remains to show that conditions (i) to (v) of Proposition \ref{prop:dual} are satisfied with high probability when $\Omega$ is chosen uniformly at random in accordance with Theorem \ref{thm:min_sep_thm}. For ease of analysis, it has become customary in compressed sensing theory to deduce recovery statements for the uniform sampling models by first proving statements of some alternative sampling model. One approach (considered in \cite{candes2011probabilistic, gross2011recovering}) is sampling with replacement where each sample is drawn independently, another popular approach is to consider a Bernoulli sampling model. To understand the use of the Bernoulli model, first recall the argument from \cite[Section II.C]{candes2006robust}, which shows that the probability that one of the conditions (i)-(v) of Theorem \ref{thm:min_sep_thm} fails for $\Omega$ chosen uniformly at random is up to a constant bounded from above by the probability that one of these conditions fails when $\Omega$ is chosen in accordance with a Bernoulli sampling model: $\Omega = \br{\delta_j \cdot j : j=-2M,\ldots, 2M}$ where $\delta_j$ are independent random variables such that $\bbP(\delta_j = 1) = q$ and $\bbP(\delta_j = 0) = 1-q$, where $q = m/M$ with $m$ as defined in (\ref{eq:num_samples_min_sep}). We denote such a choice of $\Omega$ by $\Omega \sim \mathrm{Ber}(q, 2M)$. Thus, it  suffices to show that the conditions hold choosing $\Omega$ in accordance with this Bernoulli sampling model.  We  assume throughout the rest of this section that $\Omega \sim \mathrm{Ber}(q, 2M)$.

Proposition \ref{prop:dual} presents conditions under which the gradient of $x$ (and hence $x$ by the Poincar\'{e} inequality) can be stably recovered by solving (\ref{eq:noise_tv}). These conditions are essentially the conditions required for stable recovery of a discrete signal from its low frequency Fourier data, and the proof of Theorem \ref{thm:min_sep_thm} will heavily rely on results from the analysis of super-resolution in \cite{candes2014towards,candes2013super,tang2012compressive}.
As mentioned before, the problem of constructing the dual certificate which satisfies the conditions of Proposition \ref{prop:dual} is a problem of interpolating a trigonometric polynomial. A common approach of tackling these problems is via the use of the squared Fej\'{e}r  kernel. Furthermore, $\rP_{[2M]} \rA \rP_\Delta$ is a Vandermonde type matrix, and its invertability can also be analysed via the squared Fej\'{e}r  kernel \cite{potts2010parameter}.

We first recall a discrete version of a result from \cite{candes2014towards} which shows that $\rP_{[2M]} \rA \rP_\Delta$ has a left inverse.

\begin{lemma}\cite{candes2014towards} \label{lem:fejer_leftinv}
 Let $K_M(t)$ be the squared Fej\'{e}r  kernel
$$
K_M(t) = \left(\frac{\sin(\pi M t)}{M \sin(\pi t)}\right)^4
= \frac{1}{M} \sum_{j=-2M}^{2M} g_M(j) e^{-\frac{2\pi i tj}{N}}
$$
where
$$
g_M(j) = \frac{1}{M} \sum_{k=\max(j-M,-M)}^{\min(j+M,M)} \left(1-\abs{\frac{k}{M}}\right)\left(1-\abs{\frac{j}{M} - \frac{k}{M}}\right).
$$
Define $\rL = (\rL_{j,k} )_{j,k=1}^s\in\bbC^{s\times s}$ by $\rL_{j,k} = K_M(t_j-t_k)$.
Then, 
\begin{enumerate}
\item $
\rL = \rP_\Delta \rA^* \rV \rA \rP_\Delta
$
where $\rV= (v_{j,k})_{j,k=-\lfloor N/2\rfloor+1}^{\lceil N/2\rceil}$, is a diagonal matrix such that $v_{k,k}=g_M(k)$ for $k=-2M,\ldots, 2M$ and has zero entries otherwise. 
\item $\nmu{g_M}_\infty \leq 1$
\item $\nmu{\rL - \rI}_{2\to 2} \leq 6.253\times 10^{-3}$ which implies that
\begin{enumerate}
\item $\rL$ is invertible and $\nmu{\rL^{-1}}_{2\to 2} \leq  0.993747$
\item $\nmu{\rL}_{2\to 2} \leq 1+6.253\times 10^{-3}$.
\end{enumerate}
\end{enumerate}
\end{lemma}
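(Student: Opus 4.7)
The plan is to establish the three parts in sequence. Parts (1) and (2) are essentially Fourier bookkeeping, while part (3) is the technical heart and controls the remaining items.

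For part (1), I would substitute the stated expansion of $K_M$ into the definition of $\rL$:
$$
\rL_{j,k} = K_M(t_j - t_k) = \frac{1}{M} \sum_{l=-2M}^{2M} g_M(l)\, e^{-2\pi i (t_j-t_k) l/N},
$$
and regroup as $\sum_l g_M(l)\, \overline{(\rA)_{l,t_j}}\,(\rA)_{l,t_k}$. Packaging the Fourier coefficients into the diagonal matrix $\rV$ supported on $|l| \leq 2M$ identifies the right-hand side as $\rP_\Delta \rA^* \rV \rA \rP_\Delta$. For part (2), the formula for $g_M$ exhibits $M\cdot g_M$ as (up to normalization) the discrete autocorrelation of the triangular function $\Lambda_M(k) := \max(0, 1 - |k|/M)$. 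Nonnegativity of $\Lambda_M$ together with Cauchy--Schwarz gives $g_M(j) \leq g_M(0)$ for every $j$, and a direct evaluation yields $g_M(0) = \tfrac{2}{3} + \tfrac{1}{3M^2} \leq 1$ for all $M \geq 1$.

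For part (3), note first that $K_M(0) = 1$ (by $\sin(\pi M t)/(M\sin(\pi t))\to 1$ as $t\to 0$), so $\rL - I$ has vanishing diagonal and the Schur (Gershgorin) bound gives
$$
\|\rL - I\|_{2\to 2} \leq \max_j \sum_{k \neq j} |K_M(t_j - t_k)|.
$$
Under the hypothesis $\nu_{\min}(\Delta, N) \geq 1/M$, after ordering the other $t_k$'s by torus distance from a fixed $t_j$, the $l$-th closest lies at distance at least $l/M$. Combined with the pointwise bound $K_M(t) \leq (M\sin(\pi |t|/N))^{-4}$, this reduces the row sum to an estimate of the form $2\sum_{l \geq 1}(M\sin(\pi l/M))^{-4}$. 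The sharp constant $6.253\times 10^{-3}$ then follows from the careful numerical evaluation of this tail sum carried out in \cite{candes2014towards} for the analogous continuous super-resolution problem; since every intermediate estimate there is pointwise in the separation variable, that bookkeeping transfers verbatim to the present discrete setting. Items (3a) and (3b) are then immediate: once $\|\rL - I\|_{2\to 2} < 1$, the Neumann series yields the stated bound on $\|\rL^{-1}\|_{2\to 2}$, and the triangle inequality yields the bound on $\|\rL\|_{2\to 2}$.

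The main obstacle, as is typical in dual certificate constructions based on the squared Fej\'er kernel, is obtaining the sharp numerical constant in part (3). A crude bound using $\sin(\pi x) \geq 2x$ yields only something of order $\pi^4/720 \approx 0.135$, which is far too weak. To reach $6.253 \times 10^{-3}$ one must split the sum, bounding the nearest few neighbours ($l = 1, 2, 3$) with refined trigonometric inequalities and dispatching the tail $l \geq L$ by integral comparison, in the style of the proof of the corresponding proposition in \cite{candes2014towards}. Once that numerical bookkeeping is in hand, the remainder of the lemma reduces to Fourier algebra and elementary perturbation theory.
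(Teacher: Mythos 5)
The paper does not actually prove this lemma --- it is imported wholesale from \cite{candes2014towards} --- so there is no in-paper argument to compare against; your reconstruction is a reasonable account of what a proof would look like, and parts (1) and (2) are correct as you describe them (in particular $g_M(0)=\tfrac{2}{3}+\tfrac{1}{3M^2}$ is right, and your Gershgorin/row-sum reduction of part (3) to the tail estimate of \cite{candes2014towards} is exactly where the content lies). Two caveats. First, a bookkeeping point you silently inherit from the paper: with $v_{l,l}=g_M(l)$ and $K_M(t)=\tfrac{1}{M}\sum_l g_M(l)e^{-2\pi i tl/N}$, the identity is really $\rL=\tfrac{1}{M}\rP_\Delta\rA^*\rV\rA\rP_\Delta$ (the factor $1/M$ is dropped in the statement but reappears in Lemma \ref{lem:cond_i}, where $\tilde\rL=\tfrac1m\rP_\Delta\rA^*\rV\rP_\Omega\rA\rP_\Delta$ with $m=qM$); your ``regroup'' step drops the same factor, so make the normalization explicit. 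Second, item (3a) cannot follow from the Neumann series as you claim: $\nm{\rL-\rI}_{2\to2}\leq\delta$ gives $\nm{\rL^{-1}}_{2\to2}\leq(1-\delta)^{-1}\approx 1.0063$, not $\leq 0.993747$; indeed the stated bound is incompatible with (3b), since $\nm{\rL^{-1}}_{2\to2}\geq\nm{\rL}_{2\to2}^{-1}\geq(1+\delta)^{-1}>0.993747$. The bound in the source is $\nm{\rL^{-1}}_{2\to2}\leq(1-6.253\times10^{-3})^{-1}$, i.e.\ the reciprocal of what is printed here; your proof proves the correct version, so say that rather than asserting that Neumann ``yields the stated bound.''
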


In the following two lemmas, we will prove that   conditions (i) and (ii) of Proposition \ref{prop:dual} are satisfied with high probability under the conditions of Theorem \ref{thm:min_sep_thm} by making use of the matrix Bernstein inequality \cite{tropp2012user} and Lemma \ref{lem:fejer_leftinv}.

\begin{lemma}\label{lem:cond_i} Consider the setup of Lemma \ref{lem:fejer_leftinv}.
Let $\Omega \sim \mathrm{Ber}\left(q, 2M\right)$ with $q= \frac{m}{M}$.
For each $\epsilon \in (0,1]$, if
$$
m \gtrsim  \log\left(\frac{s}{\epsilon}\right) \cdot s.
$$
Then with probability exceeding $1-\epsilon$,
\begin{itemize}
\item[(i)] $\tilde \rL = \frac{1}{m} \rP_\Delta \rA^* \rV \rP_\Omega \rA \rP_\Delta$ is invertible. 
\item[(ii)] $\nmu{\tilde \rL}_{2\to 2} \leq \frac{5}{4}$ and $\nmu{\tilde \rL^{-1}}_{2\to 2} \leq \frac{4}{3}$.
Note that since $\rV$ is a diagonal matrix, it is self adjoint and $\tilde \rL = \frac{1}{m} \rP_\Delta \rA^* \rV^{1/2} \rP_\Omega \rV^{1/2}\rA \rP_\Delta$. So, $\nmu{ \frac{1}{\sqrt{m}} \rP_\Delta \rA^* \rV^{1/2} \rP_\Omega}_{2\to 2} = \nmu{\tilde \rL}_{2\to 2}^{1/2} \leq \sqrt{\frac{5}{4}}$ and $\nmu{\rV^{1/2}}_{2\to 2} \leq 1$ since $\nmu{g_M}_\infty \leq 1$.
\end{itemize} 
If $q=1$, then (i) and (ii) hold with probability 1. 
 
\end{lemma}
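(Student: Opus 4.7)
My plan is to show, via the matrix Bernstein inequality, that the random matrix $\tilde \rL$ concentrates around the deterministic Fej\'{e}r-kernel matrix $\rL$ of Lemma \ref{lem:fejer_leftinv}. Since that lemma already gives $\nmu{\rL - \rI}_{2\to 2} \leq 6.253\times 10^{-3}$, it will suffice to prove $\nmu{\tilde \rL - \rL}_{2\to 2} \leq 1/5$ (say) with probability at least $1-\epsilon$; the triangle inequality then forces $\nmu{\tilde \rL - \rI}_{2\to 2}\leq 1/4$, and a Neumann-series argument immediately supplies invertibility together with $\nmu{\tilde \rL}_{2\to 2}\leq 5/4$ and $\nmu{\tilde \rL^{-1}}_{2\to 2}\leq 4/3$. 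The auxiliary identities involving $\rV^{1/2}$ follow from self-adjointness and nonnegativity of the diagonal $\rV$, using the bound $\nmu{g_M}_\infty\leq 1$ from Lemma \ref{lem:fejer_leftinv}(2).

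I would then expand $\tilde \rL = \tfrac{1}{m}\sum_{j\in[-2M,2M]} \delta_j g_M(j)\, f_j f_j^{*}$ with $f_j := \rP_\Delta \rA^{*} e_j \in \bbC^s$ and $\nmu{f_j}_2^2 = s$, and verify from the normalization $q=m/M$ that $\mathbb{E}[\tilde \rL] = \rL$. The centered summands $Z_j := \tfrac{g_M(j)}{m}(\delta_j - q)f_j f_j^{*}$ are independent, self-adjoint and mean zero; using $|g_M(j)|\leq 1$, $|\delta_j - q|\leq 1$, and $\nmu{f_j f_j^{*}}_{2\to 2}=s$ yields the uniform bound $\nmu{Z_j}_{2\to 2}\leq s/m$. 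For the matrix variance, $\mathbb{E}[Z_j^2] = \tfrac{g_M(j)^2 q(1-q)}{m^2}\,s\,f_jf_j^{*}$, and using the pointwise inequality $g_M(j)^2 \leq g_M(j)$ I would bound
$$
\Big\|\sum_j \mathbb{E}[Z_j^2]\Big\|_{2\to 2}\;\leq\;\frac{qs}{m^2}\,\Big\|\sum_j g_M(j) f_j f_j^{*}\Big\|_{2\to 2}\;\lesssim\;\frac{s}{m},
$$
where the last step invokes $\nmu{\rL}_{2\to 2}\leq 1+6.253\times 10^{-3}$ from Lemma \ref{lem:fejer_leftinv}.

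Applying matrix Bernstein \cite{tropp2012user} then gives $\bbP(\nmu{\tilde \rL - \rL}_{2\to 2}\geq t) \leq 2s\exp(-c\, t^2 m/s)$ for $t\in(0,1)$ and absolute constants, so choosing $t=1/5$ and imposing $m\gtrsim s\log(s/\epsilon)$ drives the failure probability below $\epsilon$, which is exactly the sample-complexity hypothesis in the lemma. The case $q=1$ is deterministic: $\tilde \rL = \rL$, and (i)--(ii) then follow directly from Lemma \ref{lem:fejer_leftinv}. The reduction from the Bernoulli model to uniform sampling without replacement is the standard one recalled in the paragraph preceding the lemma. The only delicate point is bookkeeping of constants: the $1/m$ in $\tilde \rL$, the probability $q=m/M$, and the $1/M$ implicit in the squared-Fej\'{e}r expansion have to combine so that $\mathbb{E}[\tilde \rL]$ is exactly $\rL$ and not a rescaled version; once this normalization is pinned down, the rest of the argument is a routine Bernstein estimate whose variance proxy is controlled entirely by $\nmu{\rL}_{2\to 2}$.
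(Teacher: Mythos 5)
Your proposal is correct and follows essentially the same route as the paper: expand $\tilde \rL - \rL$ as a sum of independent, mean-zero, self-adjoint rank-one perturbations, bound the uniform norm by $s/m$ and the variance proxy by a constant times $s/m$ via $\nmu{\rL}_{2\to 2}\leq 1+6.253\times 10^{-3}$, and apply matrix Bernstein so that $m\gtrsim s\log(s/\epsilon)$ suffices, with the $q=1$ case reducing deterministically to Lemma \ref{lem:fejer_leftinv}. Your Neumann-series deduction of (i)--(ii) from $\nmu{\tilde\rL-\rI}_{2\to2}\leq 1/4$ is a slightly cleaner packaging of the paper's perturbation step, but not a different argument.
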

\prf{
First, if $q=1$, then $\tilde L = L$ and the result follows from Lemma \ref{lem:fejer_leftinv}.
For the case when $q<1$, observe that from Lemma \ref{lem:fejer_leftinv}, we have that $\nmu{\rL - \rI}_{2\to 2} \leq 6.253\times 10^{-3}$, which implies that
$\nmu{\rL}_{2\to 2} - 6.253\times 10^{-3} \leq \nmu{\rL \rL^{-1}}_{2\to 2}$ and
$\nmu{\rL^{-1}}_{2\to 2} \geq 1- \frac{6.253\times 10^{-3}}{\nmu{\rL}_{2\to 2}} \geq 1- \frac{6.253\times 10^{-3}}{1- 6.253\times 10^{-3} } \geq 0.99$.
 Thus, if $\nmu{\tilde \rL - \rL}_{2\to 2} \leq \frac{1}{4\cdot0.99} $, then
$\nmu{\tilde \rL - \rL}_{2\to 2} \leq \frac{1}{4\cdot \nmu{\rL^{-1}}_{2\to 2}} $ and
\begin{enumerate}
\item  $\tilde \rL$ is invertible and $\nmu{\tilde \rL^{-1}}_{2\to 2} \leq \frac{4}{3}\cdot 0.99$.
\item $\nmu{\tilde \rL}_{2\to 2} \leq \frac{1}{4}+ 0.99$.
\end{enumerate}
So, it suffices to show that with probability exceeding $1-\epsilon$,
$$
\nmu{\tilde \rL - \rL}_{2\to 2} \leq \frac{1}{4\cdot0.99}.
$$
We will do so using the matrix Bernstein inequality.
Let $(\delta_j)_{j=-2M}^{2M}$ be independent Bernoulli random variables such that $\bbP(\delta_j = 1) = q$ and $\bbP(\delta_j = 0) = 1-q$. Let $\tilde K_M(t) =  \frac{1}{m}\sum_{j=-2M}^{2M} \delta_j g_M(j) e^{-\frac{2\pi i t j}{N}}$ and observe that $(\tilde \rL)_{j,k} =  \tilde K_M(t_j - t_k)$.

Let $\eta_j = \left(e^{-2\pi i t_1 j}, e^{-2\pi i t_2 j},\ldots,  e^{-2\pi i t_s j} \right)^T$. Then
\eas{
\tilde \rL - \rL &= \sum_{j=-2M}^{2M} \frac{g_M(j) \delta_j}{m}    \left( \eta_j\otimes \overline\eta_j\right)-  \sum_{j=-2M}^{2M} \frac{g_M(j)}{M}  \left( \eta_j\otimes \overline \eta_j\right)\\
&= \sum_{j=-2M}^{2M} \frac{g_M(j)}{M}  \left(\frac{\delta_j}{q} -1\right)  \left( \eta_j\otimes \overline \eta_j\right)
= \sum_{j=-2M}^{2M} X_j,
}
where $\otimes$ is the Kronecker product, and $X_j := \frac{g_M(j)}{M}  \left(\frac{\delta_j}{q} -1\right)  \left( \eta_j\otimes \overline \eta_j\right)$ are independent random self-adjoint matrices of zero mean.
In order to apply the matrix Bernstein inequality, we require bounds on $\max_j \nm{X_j}_{2\to 2}$ and $\nmu{\sum_{j=-2M}^{2M}\bbE(X_j^2)}_{2\to 2}$.
\begin{enumerate}
\item $$\nmu{X_j}_{2\to 2} \leq \frac{\abs{g_M(j)}}{M} \cdot \frac{1}{q}\cdot \nmu{\eta_j}_2^2 \leq \frac{s}{m}, $$
since $\nmu{\eta_j}_2^2 = s$ and  $\nmu{g_M(j)}_\infty \leq 1$. So, 
$$
R = \max_{\abs{j}\leq 2M} \nmu{X_j}_{2\to 2}\leq \frac{s}{m}.
$$
\item \eas{
&\sigma^2 = \nm{\sum_{j=-2M}^{2M}\bbE(X_j^2)}_{2\to 2} = \nm{ \sum_{j=-2M}^{2M} \left(\frac{1}{q}-1\right) \left(\frac{g_M(j)^2 \nm{\eta_j}^2_2}{M^2}\right) \left(\eta_j\otimes \overline \eta_j\right)}_{2\to 2}\\
&\leq \left(\frac{1}{q}-1\right) \left(\frac{g_M(j) \cdot s}{M}\right)  \nm{ \sum_{j=-2M}^{2M} \frac{g_M(j)}{M}  \left(\eta_j\otimes \overline \eta_j\right)}_{2\to 2}
\leq \left(\frac{1}{q}-1\right) \left(\frac{ s}{M}\right) \nmu{\rL}_{2 \to 2}
}
since $\nmu{g_M}_\infty \leq 1$ and $\nmu{\eta_j}_2^2 = 2$. Finally, because $\nmu{\rL}_{2\to 2} \leq 1.1$, it follows that $$ \sigma^2 \leq 1.1\left(\frac{1}{q}-1\right) \left(\frac{ s}{M}\right).$$

\end{enumerate}

Let $\gamma = \frac{1}{4\cdot 0.99} $. Then by matrix Bernstein,
\eas{
\bbP\left(\nmu{\tilde \rL - \rL}_{2\to 2} \geq \gamma \right) \leq s \cdot\exp\left(\frac{-\gamma^2/2}{\sigma^2 + R \gamma/3}\right) \leq \epsilon
}
provided that
$$
\log\left(\frac{s}{\epsilon}\right)\cdot \left(1.1+ \frac{\gamma}{2}\right)\cdot \frac{s}{m} \leq \frac{\gamma^2}{2}
$$

}

\begin{lemma}\label{lem:ii}

Let $\epsilon \in (0,1]$ and suppose that $\Omega \sim \mathrm{Ber}(q, 2M)$ with $q=m/M$ and 
$$
m\gtrsim  \log\left(\frac{N}{\epsilon}\right).
$$
Then,
 $$\bbP\left(\max_{j=1}^N \nm{\frac{1}{m}\rP_{\br{j}} \rA^* \rP_{\Omega } \rA \rP_{\br{j}}}_{2\to 2}\geq  5.5\right) \leq \epsilon.$$
 If $q=1$, then $\max_{j=1}^N \nmu{\frac{1}{m}\rP_{\br{j}} \rA^* \rP_{\Omega } \rA \rP_{\br{j}}}_{2\to 2}\leq 5.5$ holds with probability 1.
\end{lemma}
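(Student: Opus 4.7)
The plan is to observe that the operator in question collapses to a scalar multiple of a rank-one projection, after which the estimate reduces to a single-variable Chernoff bound. Writing $A_{k,j} = e^{2\pi i kj/N}$, we have $|A_{k,j}|^2 = 1$ for every $k$ and $j$, and so the diagonal entries of $\rA^* \rP_\Omega \rA$ all equal
\[
\sum_{k\in\Omega} |A_{k,j}|^2 = |\Omega|.
\]
Since $\rP_{\{j\}}$ has one-dimensional range spanned by $e_j$, the composition $\rP_{\{j\}}\rA^*\rP_\Omega\rA\rP_{\{j\}}$ equals $|\Omega|\cdot e_j e_j^*$ and therefore $\|\rP_{\{j\}}\rA^*\rP_\Omega\rA\rP_{\{j\}}\|_{2\to 2}=|\Omega|$, independent of $j$. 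In particular,
\[
\max_{j=1}^N \Bigl\|\tfrac{1}{m}\rP_{\{j\}}\rA^*\rP_\Omega\rA\rP_{\{j\}}\Bigr\|_{2\to 2}=\frac{|\Omega|}{m}
\]
as a deterministic identity, so no union bound over $j$ is actually needed, and the lemma becomes a bound on the scalar random variable $|\Omega|/m$.

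Under the Bernoulli model $\Omega\sim\mathrm{Ber}(q,2M)$ with $q=m/M$, we have $|\Omega|=\sum_{k=-2M}^{2M}\delta_k$, a sum of $4M+1$ i.i.d.\ Bernoulli$(q)$ random variables with mean
\[
\bbE\,|\Omega| = (4M+1)\tfrac{m}{M} = 4m + \tfrac{m}{M}\leq 4.1\,m,
\]
where the final inequality uses the standing hypothesis $M\geq 10$ from Theorem~\ref{thm:min_sep_thm}. The gap between the threshold $5.5m$ and this mean is at least $1.4m$, so a standard Chernoff/Bernstein inequality for sums of independent Bernoulli variables yields
\[
\bbP\bigl(|\Omega|\geq 5.5m\bigr)\leq \exp(-c_0 m)
\]
for a universal $c_0>0$. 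Requiring $m\gtrsim \log(N/\epsilon)$ (indeed $m\gtrsim\log(1/\epsilon)$ suffices, but the larger hypothesis is convenient for matching the other probabilistic bounds in the proof of Theorem~\ref{thm:min_sep_thm}) makes the right-hand side at most $\epsilon$, finishing the probabilistic case.

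For the deterministic case $q=1$, the set $\Omega=\{-2M,\dots,2M\}$ is fixed with $|\Omega|=4M+1$ and $m=M$, giving $|\Omega|/m = 4 + 1/M \leq 5 \leq 5.5$, so the stated bound holds with probability one. The only genuine content of the proof is the first observation that $\rP_{\{j\}}\rA^*\rP_\Omega\rA\rP_{\{j\}}$ reduces to scalar multiplication by $|\Omega|$; once this is in hand, everything else is a textbook concentration estimate, and I do not anticipate any obstacle beyond keeping track of the constants.
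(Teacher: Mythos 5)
Your proof is correct and follows essentially the same route as the paper: both reduce the operator norm to the scalar quantity $|\Omega|/m$ (since all entries of $\rA$ have modulus one) and then apply a scalar Chernoff/Bernstein bound to the Bernoulli sum $|\Omega|=\sum_k\delta_k$. Your observation that the quantity is independent of $j$, so that the paper's final union bound over $j=1,\ldots,N$ is superfluous and $m\gtrsim\log(\epsilon^{-1})$ would already suffice, is a valid (minor) sharpening of the paper's argument, which does take the union bound and hence states the hypothesis as $m\gtrsim\log(N/\epsilon)$.
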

\prf{
First, if $q=1$, then for each $j=1,\ldots, N$,
\be{\label{eq:determi_bd}
\nm{\frac{1}{M} \rP_{\br{j}} \rA^* \rP_{[2M]} \rA \rP_{\br{j}}}_{2\to 2} = \frac{4M+1}{M}\leq 5
}
and so
$$\bbP\left(\max_{j=1}^N \nm{\frac{1}{m}\rP_{\br{j}} \rA^* \rP_{\Omega } \rA \rP_{\br{j}}}_{2\to 2}\geq  5.5\right) =0.$$
For $q<1$, let $(\delta_k)_{k=-2M}^{2M}$ be independent Bernoulli random variables such that $\bbP(\delta_j = 1) = q$ and $\bbP(\delta_j = 0) = 1-q$.
 Then
\eas{
\frac{1}{m}\rP_{\br{j}} \rA^* \rP_\Omega \rA \rP_{\br{j}} &- \frac{1}{M} \rP_{\br{j}} \rA^* \rP_{[2M]} \rA \rP_{\br{j}} = \sum_{k=-2M}^{2M} \frac{ \delta_k}{m} -  \sum_{k=-2M}^{2M} \frac{1}{M}  \\
&= \sum_{k=-2M}^{2M} \frac{1}{M}  \left(\frac{\delta_k}{q} -1\right)   
= \sum_{k=-2M}^{2M} X_k
}
where $X_k = \frac{1}{M}  \left(\frac{\delta_k}{q} -1\right)  $ are independent random variables of zero mean. 
So, combining with (\ref{eq:determi_bd}) gives that for each $j=1,\ldots, N$,
\eas{ 
&\bbP\left( \nm{\frac{1}{qM}\rP_{\br{j}} \rA^* \rP_{\Omega } \rA \rP_{\br{j}}}_{2\to 2}\geq 5.5\right) 
\leq \bbP\left( \abs{\sum_{k=-2M}^{2M} X_k} \geq  0.5  \right)
}
We will apply Bernstein's inequality \cite{foucart2013mathematical} to bound the right hand side of the above inequality. Observe that
$\abs{X_k} \leq \frac{1}{qM} = \frac{1}{m}$ and $$\sum_{k=-2M}^{2M} \bbE(X_k^2) = \frac{2M+1}{M^2}\cdot \left(\frac{1}{q}-1\right) \leq \frac{3}{m}.$$
Therefore,
$$
\bbP\left( \abs{\sum_{k=-2M}^{2M} X_k} \geq  0.5  \right)
\leq 2\exp \left( -\frac{1/8}{3/m+ 1/6m}\right).
$$
So, by applying the union bound, the conclusion follows.

}

We will consider the existence of $\rho = \rA^* \rP_\Omega w$ which satisfies conditions (iii) - (iv) in Proposition \ref{prop:dual} with $c_1 = 0$, $c_2=1$ and some constant $c_0 <1$. This dual certificate is actually identical to a discrete version of the dual certificate constructed in \cite{tang2012compressive}. We simply recall a few results from \cite{tang2012compressive} and provide a bound for $\nmu{w}_2$.

\begin{lemma}\cite[Section IV.C.]{tang2012compressive}\label{lem:CS_off_grid}
Let $(\delta_k)_{k=-2M}^{2M}$ be independent Bernoulli random variables such that $\bbP(\delta_j = 1) = q$ and $\bbP(\delta_j = 0) = 1-q$ with $q=m/M$.
Let $\Delta = \br{t_1,\ldots, t_s}$ and let $x\in\bbC^N$. If $\nu_{\min}(\Delta, N) \geq \frac{1}{M}$ and 
 $$m \gtrsim \max\br{ \log^2\left(\frac{M}{\epsilon}\right), \, \log\left(\frac{N}{\epsilon}\right)  ,\, s\cdot \log \left(\frac{s}{\epsilon}\right)\cdot \log\left(\frac{M}{\epsilon}\right) },
$$ then with probability exceeding $1-\epsilon$,
there exists constants $(\alpha_k)_{k=1}^s, (\beta_k)_{k=1}^s \in\bbR^s$ such that
the trigonometric polynomial
$$
Q(t) = \sum_{k=1}^s \alpha_k \overline K_M(t-t_k) + \sum_{k=1}^s \beta_k \overline K_M'(t-t_k)
$$
where $\overline K_M(t) = \sum_{\abs{j}\leq 2M} \delta_j g_M(j) e^{-2\pi i tj/N}
$ and $\overline K_M'$ is the first derivative of $\overline K_M$, satisfies the following.
\begin{enumerate}
\item $Q(t ) =( \sgn(\rP_\Delta x))_{t}$ for each $t\in\Delta$,
\item $\abs{Q(t)} < 1$ for all $t\not\in\Delta$
\item $\sqrt{\sum_{k=1}^s \abs{\alpha_k}^2 + \sum_{k=1}^s \abs{K_M''(0)}\abs{\beta_k}^2} \leq 2\sqrt{s}\cdot q^{-1}\cdot 1.568$.
\end{enumerate}
where $K_M''$ is the second derivative of the squared Fej\'{e}r  kernel and $K_M''(0) = -\frac{4\pi^2(M^2-1)}{3}$. Furthermore,for each $k\in\br{1,\ldots, N}\setminus \Delta$,
\be{\label{ineq:stability}
\abs{Q(k) } \leq \max \br{ 1- \frac{0.92 (M^2-1)}{N^2}, 0.99993}.
}

\end{lemma}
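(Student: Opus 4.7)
The plan is to follow the Cand\`es-Fernandez-Granda interpolation strategy, adapted to the random Bernoulli sampling model as in Tang, Bhaskar, Shah, and Recht. First, I would impose the $2s$ linear constraints $Q(t_j) = (\sgn(\rP_\Delta x))_{t_j}$ and $Q'(t_j) = 0$ for $j=1,\ldots,s$; this matches the $2s$ unknowns $\{\alpha_k,\beta_k\}$ and produces a square block linear system, whose matrix $\rD_\Omega$ has $(j,k)$-entries of the form $\overline K_M(t_j-t_k)$ in the top-left block, $\overline K_M'(t_j-t_k)$ in the two off-diagonal blocks, and $\overline K_M''(t_j-t_k)$ in the bottom-right block. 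To reconcile the different natural scales of the two blocks I would rescale $\beta_k \mapsto \beta_k/\sqrt{|K_M''(0)|}$, so that, up to a factor $qM$, the diagonal of the rescaled system is the identity; the target vector then has $\ell^2$ norm $\sqrt{s}$, which is exactly what is needed to reach the $\sqrt{s}/q$ bound in (3).

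The second step is to show that $\rD_\Omega$ is invertible with a controlled inverse. In the deterministic regime $\delta_j\equiv 1$, the system matrix $M\rD$ is built from $K_M,K_M',K_M''$; under the minimum-separation hypothesis $\nu_{\min}\geq 1/M$, the analysis of Cand\`es-Fernandez-Granda gives $\|M^{-1}\rD-\rI\|_{2\to 2}\leq 1/4$ via the rapid decay of these kernels away from the origin. For the Bernoulli case I would decompose $\rD_\Omega - qM\rD$ as a sum of $4M+1$ independent, zero-mean random matrices indexed by $|k|\leq 2M$ and invoke matrix Bernstein, exactly as in the proof of Lemma \ref{lem:cond_i}, to obtain $\|(qM)^{-1}\rD_\Omega - M^{-1}\rD\|_{2\to 2}\leq 1/4$ with probability at least $1-\epsilon/3$ provided $m\gtrsim s\log(s/\epsilon)$. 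Solving the linear system, and then undoing the $\sqrt{|K_M''(0)|}$ rescaling, yields both properties (1) and (3).

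The third step is the pointwise bound $|Q(t)|<1$ for $t\not\in\Delta$ and its quantitative grid-point form (\ref{ineq:stability}). Writing $Q = q^{-1}\mathbb{E} Q + (Q - q^{-1}\mathbb{E} Q)$, the expected part is essentially the Cand\`es-Fernandez-Granda deterministic certificate, which is known to satisfy $|q^{-1}\mathbb{E} Q(t)| \leq 1 - c_1(M^2-1)(t-t_j)^2/N^2$ on a neighborhood of each $t_j$ of radius $\sim 1/M$ and $|q^{-1}\mathbb{E} Q(t)| \leq 1-c_2$ elsewhere. I would then control the random fluctuations of $Q,Q',Q''$ around their rescaled means via scalar Bernstein at each of the $N$ grid points $k/N$, which by the union bound costs $\log(N/\epsilon)$ samples. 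At grid points near some $t_j$, a second-order Taylor expansion of $|Q|^2$ (using $|Q(t_j)|=1$, $Q'(t_j)=0$ and $\mathrm{Re}(\overline{Q(t_j)}Q''(t_j))\lesssim -M^2$) together with the fluctuation bounds propagates the deterministic estimate to the random $Q$, giving (\ref{ineq:stability}); grid points far from the support are handled directly.

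The main obstacle is this third step: because $|Q|=1$ holds exactly on $\Delta$, the margin $1-|Q(t)|$ is only of order $M^2(t-t_j)^2/N^2$ near each support point and vanishes rapidly as $t\to t_j$, so the fluctuation bounds must be correspondingly sharp. In particular one needs the error on $Q''$ to be small compared to $M^2$, forcing a Bernstein variance estimate sharper than that needed for $Q$ alone; the $\log^2(M/\epsilon)$ term in the sample-complexity requirement arises here as the product of the $\log(M/\epsilon)$ Bernstein tail and a second $\log(M/\epsilon)$ factor absorbing the variance bound for the second derivative at the $O(1/M)$ scale near each support point. Once uniform control of $Q''$ at this scale is in hand, the Taylor-expansion argument closes the proof and supplies the explicit constants in (\ref{ineq:stability}).
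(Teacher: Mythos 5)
Note first that the paper itself does not prove this lemma: it is imported wholesale from \cite{tang2012compressive} (Section IV.C), with the bound (\ref{ineq:stability}) obtained by combining their Proposition 4.12 with their equation (IV.38), and your sketch is essentially a reconstruction of that cited argument --- interpolation of the sign pattern with $\overline K_M$ and $\overline K_M'$ plus the derivative constraints, matrix Bernstein to invert the rescaled random block system and obtain the coefficient bound, and deterministic-certificate-plus-fluctuation control of $Q$, $Q'$, $Q''$ off the support with the quadratic-decay/Taylor argument near each $t_j$ --- so the approach matches. The one point to tidy is that, since $(\alpha_k,\beta_k)$ are themselves random and correlated with the $\delta_j$, $q^{-1}\bbE Q$ is not literally the deterministic certificate; the comparison should be with the certificate built from the deterministic coefficients and the full kernel, which your step 2 (closeness of the random system and hence of the coefficients to the deterministic ones) already supplies.
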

\begin{remark}
This lemma is essentially proved in \cite{tang2012compressive}. The inequality (\ref{ineq:stability}) is a result of combining the result of Proposition 4.12 and equation (IV.38) from \cite{tang2012compressive}.
\end{remark}

\begin{lemma}
Suppose that $M \geq 10$ and 
 $$m \gtrsim \max\br{ \log^2\left(\frac{M}{\epsilon}\right), \, s\cdot \log \left(\frac{s}{\epsilon}\right)\cdot \log\left(\frac{M}{\epsilon}\right) }.
$$
 Let $E$ be the event that conditions (i)-(v) of Proposition \ref{prop:dual} are satisfied, with
$$c_0 := \max \br{ 1- \frac{0.92 (M^2-1)}{N^2}, 0.99993}, \quad c_1 := 0, \quad c_2 := 1$$ and $\rU = \rA^* \rV$ where $\rV$ is the diagonal matrix defined in Lemma \ref{lem:fejer_leftinv} and $\Omega = \br{0} \cup \Omega'$ where $\Omega' \sim \mathrm{Ber}(m/M, 2M)$.
Then $\bbP(E) > 1-\epsilon$.
 
 \end{lemma}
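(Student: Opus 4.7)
The plan is to verify each of the conditions (i)--(v) of Proposition \ref{prop:dual} at failure probability $\epsilon/4$ and combine by a union bound. Condition (i) is precisely the content of Lemma \ref{lem:cond_i} applied to $\rU = \rA^* \rV$: the bound on $\tilde\rL$ is immediate, and the bound $\nm{m^{-1/2}\rP_\Delta\rU\rP_\Omega}_{2\to 2}\leq 5/4$ follows from $\nm{m^{-1/2}\rP_\Delta\rA^*\rV^{1/2}\rP_\Omega}_{2\to 2}\leq \sqrt{5/4}$ in that lemma together with $\nm{\rV^{1/2}}_{2\to 2}\leq 1$ (since $\nm{g_M}_\infty\leq 1$). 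For (ii), rather than quoting Lemma \ref{lem:ii} with its $\log N$ factor, I would observe that $\nm{m^{-1/2}\rP_\Omega\rA\rP_\Delta^\perp e_j}_2^2 = |\Omega|/m$ does not depend on $j$, so a single Chernoff bound on the binomial $|\Omega| = \sum_{|j|\leq 2M}\delta_j$ (whose mean is at most $5m$) gives $|\Omega|/m\leq 36$ with probability at least $1-\epsilon/4$ under $m\gtrsim\log(1/\epsilon)$, which is already implied by the stated hypothesis.

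For (iii) and (iv) I would take the trigonometric polynomial $Q$ from Lemma \ref{lem:CS_off_grid}. Because $\overline K_M(t) = \sum_{|j|\leq 2M}\delta_j g_M(j) e^{-2\pi i tj/N}$ and its derivative $\overline K_M'$ have frequency support in $\Omega\cap[-2M,2M]$, expanding $Q$ in frequency yields $Q(t) = \sum_{j\in\Omega} w_j e^{-2\pi i tj/N}$ with
\bes{
w_j \;=\; \delta_j g_M(j)\Bigl[\,\sum_{k=1}^s \alpha_k e^{2\pi i t_k j/N} \;-\; \frac{2\pi i j}{N}\sum_{k=1}^s \beta_k e^{2\pi i t_k j/N}\,\Bigr].
}
Setting $\rho := \rA^*\rP_\Omega w$ then yields $\rho_t = Q(t)$ for every $t\in\br{1,\ldots,N}$, so property~1 of Lemma \ref{lem:CS_off_grid} supplies (iv) with $c_1=0$ and inequality (\ref{ineq:stability}) supplies (iii) with $c_0$ as stated.

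The main obstacle is (v): the bound $\nm{w}_2\leq \sqrt{s/m}$ that corresponds to $c_2=1$. The natural route is via Parseval, since $w$ is supported on $\br{-N/2+1,\ldots,N/2}$ and $\rho = \rA^*w$, giving
\bes{
\nm{w}_2^2 \;=\; \frac{1}{N}\sum_{t=1}^N |Q(t)|^2,
}
after which it remains to control the right-hand side by exploiting the localization of $Q$. Each $\overline K_M(\cdot - t_k)$ and $\overline K_M'(\cdot - t_k)$ inherits the concentration of the squared Fej\'er kernel on $\br{t:|t-t_k|\lesssim N/M}$, and the hypothesis $\nu_{\min}=1/M$ makes the $s$ such intervals disjoint; combined with $|Q|\leq 1$ everywhere this gives $\sum_t |Q(t)|^2\lesssim sN/M$ and hence $\nm{w}_2^2\lesssim s/M\leq s/m$, since $m\leq M$. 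A more explicit alternative is to substitute the formula for $w_j$ above directly into $\nm{w}_2^2$, reducing the task to bounding two quadratic forms $\alpha^*T_1\alpha$ and $\beta^*T_2\beta$ with $T_1 = \sum_j\delta_j g_M(j)^2 (\eta_j\otimes\overline\eta_j)$ and $T_2 = \sum_j \delta_j g_M(j)^2 (2\pi j/N)^2 (\eta_j\otimes\overline\eta_j)$; $T_1$ is dominated by $g_M(0)\cdot m\tilde\rL$ via $g_M(j)^2\leq g_M(0)g_M(j)$ together with part~(ii) of Lemma \ref{lem:cond_i}, while the extra $(2\pi j/N)^2\lesssim (M/N)^2$ weight in $T_2$ combines with $|K_M''(0)|^{-1}\lesssim M^{-2}$ from part~3 of Lemma \ref{lem:CS_off_grid} to produce the same order.

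Combining the four events by a union bound under the stated hypothesis on $m$ then gives $\bbP(E)\geq 1-\epsilon$. Every step other than (v) is essentially a direct invocation of earlier results in this section, so the only genuinely technical difficulty in the plan is upgrading the trivial Parseval bound $\nm{w}_2\leq 1$ to the sparsity-sensitive $\sqrt{s/m}$ that is needed to obtain $c_2=1$.
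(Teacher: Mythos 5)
Your overall structure (union bound over the events supplying (i)--(v), quoting Lemma \ref{lem:cond_i} for (i) and Lemma \ref{lem:CS_off_grid} for (iii)--(iv), and then doing real work only for (v)) is exactly the paper's proof, and your ``more explicit alternative'' for (v) --- substituting the frequency-domain expression for $w$ and bounding the two quadratic forms by $\nmu{\tilde\rL}_{2\to 2}\leq 5/4$ together with the bound $\sum_k\absu{\alpha_k}^2+\absu{K_M''(0)}\sum_k\absu{\beta_k}^2\lesssim s q^{-2}$ from part~3 of Lemma \ref{lem:CS_off_grid} --- is precisely the argument the paper gives, with the $(2\pi j)^2$ weight absorbed into $\absu{K_M''(0)}$ using $M\geq 10$. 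Your treatment of (ii) is a genuine (small) improvement: since $\nmu{\rP_\Omega\rA e_j}_2^2=\abs{\Omega}$ is independent of $j$, a single Chernoff bound on the binomial $\abs{\Omega'}$ suffices, whereas the paper's Lemma \ref{lem:ii} runs a redundant union bound over $j=1,\ldots,N$ and thereby imports a $\log(N/\epsilon)$ requirement (which is in any case still needed for Lemma \ref{lem:CS_off_grid}, and is present in Theorem \ref{thm:min_sep_thm} though omitted from this lemma's hypothesis).

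The one genuine gap is your primary route for (v). The claim that $\overline K_M(\cdot-t_k)$ ``inherits the concentration of the squared Fej\'er kernel'' is false: $\overline K_M(t)=\sum_{\abs{j}\leq 2M}\delta_j g_M(j)e^{-2\pi i tj/N}$ is a \emph{randomly subsampled} kernel, and random deletion of frequencies destroys spatial localization. Writing $\overline K_M = q\,M K_M + (\overline K_M - q\,M K_M)$, the fluctuation term at a point far from $0$ has typical size of order $\sqrt{m}$ while the peak is of order $m$, so the tail of $\overline K_M$ contributes roughly $N/m$ (not $N/M$) to $\sum_t\abs{\overline K_M(t)/\overline K_M(0)}^2$; in particular the intermediate bound $\nmu{w}_2^2\lesssim s/M$ you state is not true, and even the weaker $s/m$ cannot be extracted from Lemma \ref{lem:CS_off_grid}, whose only quantitative off-support information is inequality (\ref{ineq:stability}), i.e.\ $\abs{Q(k)}\leq 0.99993$ --- far too weak for an $\ell^2$ localization bound. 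So the Parseval route would require a separate concentration estimate on the tails of $\overline K_M$ that is not available here; you must fall back on your route (b), which closes the argument and coincides with the paper's.
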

 
 \prf{
Let $\rho = (Q(j))_{j=1}^N$ where $Q$ is as defined in Lemma \ref{lem:CS_off_grid}. Recall conditions (i) to (v) of Proposition \ref{prop:dual}.
Let $E_1$ be the event that the conclusions of Lemma \ref{lem:cond_i} holds (so (i) holds), $E_2$ be the event that (ii) holds and $E_3$ be the event that the conclusions of Lemma \ref{lem:CS_off_grid} holds (so $\rho$ satisfies both conditions (iii) and (iv)).  Suppose that the events $E_1$ and $E_3$ imply condition (v), then  to show that $\bbP(E) > 1-\epsilon$, it suffices to show that
\be{\label{eq:remains_to_show}
\bbP(E_1^c) \leq \epsilon/3, \quad \bbP(E_2^c) \leq \epsilon/3, \quad \bbP(E_3^c) \leq \epsilon/3.
}
 Observe that if the inequalities in (\ref{eq:remains_to_show}) are satisfied for the index set $\Omega'$, then they are satisfied for the index set $\Omega \supset \Omega'$. By the choice of $\Omega'$, $\bbP(E_1^c) \leq \epsilon/3$ follows from Lemma \ref{lem:cond_i}, $\bbP(E_2^c) \leq \epsilon/3$ follows from Lemma \ref{lem:ii} and $\bbP(E_3^c) \leq \epsilon/3$ follows from Lemma \ref{lem:CS_off_grid}.

It remains to demonstrate that if $E_1$ and $E_3$ both occur, then
$\rho = \rA^* \rP_\Omega w$ with $\nmu{w}_2 \leq \frac{\sqrt{s}}{\sqrt{m}}$ (so condition (v) holds):
By definition, for each $j=1,\ldots, N$,
\eas{
\rho_j &= \frac{1}{M} \sum_{\abs{l}\leq 2M} \delta_l g_M(l) e^{2\pi i jl/N} \sum_{k=1}^s \alpha_k e^{2\pi i t_k l/N} + 
\frac{1}{M} \sum_{\abs{l}\leq 2M} (-2\pi i l) \delta_l g_M(l) e^{2\pi i jl/N} \sum_{k=1}^s \beta_k e^{2\pi i t_k l/N}\\
&= \sum_{l\in\Omega} w_l e^{2\pi i jl/N}
}
where 
$$
w_l :=\frac{ g_M(l)}{M} \left(\sum_{k=1}^s \alpha_k e^{2\pi i t_k l/N}  -2\pi i l \sum_{k=1}^s \beta_k e^{2\pi i t_k l/N}\right).
$$
To bound $\nm{\rP_\Omega w}_2$, observe that
\eas{\frac{1}{q}\nmu{\rP_\Omega w}_2^2
&= \sum_{j\in\Omega} \frac{1}{q}\abs{w_j}^2
\leq \frac{2}{q}\sum_{j\in\Omega} \abs{\frac{ g_M(j)}{M}\sum_{k=1}^s \alpha_k e^{2\pi i t_k j/N}}^2 + \frac{2}{q}\sum_{j\in\Omega}\abs{\frac{ g_M(j)}{M}\cdot 4\pi^2 \abs{j}^2\cdot \sum_{k=1}^s \beta_k e^{2\pi i t_k j/N}}^2\\
&\leq \frac{2}{q}\sum_{j\in\Omega} \abs{\frac{ g_M(j)}{M}\sum_{k=1}^s \alpha_k e^{2\pi i t_k j/N}}^2 + \frac{2}{q}\sum_{j\in\Omega}\abs{\frac{ g_M(j)}{M}\cdot 14\abs{K_M''(0)}\cdot \sum_{k=1}^s \beta_k e^{2\pi i t_k j/N}}^2,
}
if we assume that $M\geq 10$ and since $K_M''(0) = -\frac{4\pi^2(M^2-1)}{3}$. Furthermore,
since $\tilde \rL =  m^{-1} \rP_\Delta \rA^* \rV \rP_\Omega \rA \rP_\Delta$,
\eas{\frac{1}{q}\nmu{\rP_\Omega w}_2^2
&\leq 2 \frac{\nmu{g_M}_\infty}{M}\ip{\tilde \rL \alpha}{\alpha}+   2 \frac{\nmu{g_M}_\infty}{M}\cdot 14\abs{K_M''(0)}\cdot \ip{\tilde \rL \beta}{\beta} \\
&\leq \frac{28}{M}\nmu{\tilde \rL}_{2\to 2} \left(\nmu{\alpha}_2^2 +  \abs{K_M''(0)} \nmu{\beta}_2^2\right),
}
where we have used $\nm{g_M}_\infty \leq 1$.   By assumption, $\nmu{\tilde \rL}_{2\to 2} \leq 5/4$ (from Lemma \ref{lem:cond_i}) and $$\sqrt{\sum_{k=1}^s \abs{\alpha_k}^2 + \sum_{k=1}^s \abs{K_M''(0)}\abs{\beta_k}^2} \leq 2\sqrt{s}\cdot q^{-1}\cdot 1.568$$ (from Lemma \ref{lem:CS_off_grid}), therefore,
$$
\nmu{\rP_\Omega w}_2 \lesssim \sqrt{\frac{q}{M}}\cdot \frac{\sqrt{s}}{q}= \sqrt{\frac{s}{m}}.
$$

}

\section{Concluding remarks}
This paper studied one type of variable density sampling which concentrates near low frequency Fourier samples. The recovery guarantees derived are optimal up to $\log$ factors. We also showed that in the case where the discontinuities of the underlying signal are sufficiently far apart, one  need only sample from low Fourier frequencies to ensure exact recovery. Our results provided some initial justification for the use of variable density sampling patterns over uniform random sampling patterns. First, variable density sampling guarantees near-optimal stability, and, although we do not prove that uniform random sampling cannot achieve the same stability guarantees, our numerical results suggest that dense sampling near the zero frequency does indeed substantially improve stability. Second, Theorem \ref{thm:min_sep_thm} demonstrated that the number of samples required can be substantially decreased under the additional assumption that the discontinuities of the underlying signal are sufficiently far apart. It is possible that this result is only a special case of a much more general theory, since our numerical examples suggest that variable density sampling patterns can be optimized to account for the gradient sparsity structure of the underlying signal.
 
\section{Acknowledgements}
 This work was supported by the UK Engineering and Physical Sciences
Research Council (EPSRC) grant EP/H023348/1 for the University of Cambridge
Centre for Doctoral Training, the Cambridge Centre for Analysis. The author would like to thank Ben Adcock and Anders Hansen for useful discussions. This paper  also significantly benefited from the comments of the anonymous referees. Finally, the author acknowledges Kwai Fan Yip for proofreading help and invaluable comments.

 \addcontentsline{toc}{section}{References}
\bibliographystyle{abbrv}
\bibliography{References}

\end{document}